\definecolor{shadecolor}{gray}{0.90}
\def\bfit{\bfseries\itshape}
\newtheorem{theo}{Theorem}[section]
\newtheorem{prop}[theo]{Proposition}
\newtheorem{lem}[theo]{Lemma}
\newtheorem{coro}[theo]{Corollary}
\newtheorem{defi}[theo]{Definition}
\def\equat{\refstepcounter{theo}\begin{equation}}
\def\endequat{\end{equation}}
\renewcommand\thesection{\arabic{section}}
    \def\CM{{\mathbb{C}}}
\def\HG{{\mathfrak H}}
  \def\mG{{\mathfrak m}}
\def\PG{{\mathfrak P}}  \def\pG{{\mathfrak p}}  
\def\QG{{\mathfrak Q}}    
  \def\rG{{\mathfrak r}}  
\def\SG{{\mathfrak S}}
    \def\ZM{{\mathbb{Z}}}
    \def\AC{{\mathcal{A}}}
    \def\EC{{\mathcal{E}}}
    \def\FC{{\mathcal{F}}}
\def\Gb{{\mathbf G}}    
\def\Hb{{\mathbf H}}    
    \def\IC{{\mathcal{I}}}
\def\Lb{{\mathbf L}}    \def\LC{{\mathcal{L}}}
    \def\MC{{\mathcal{M}}}
\def\Pb{{\mathbf P}}  \def\pb{{\mathbf p}}  \def\PC{{\mathcal{P}}}
\def\Sb{{\mathbf S}}    \def\SC{{\mathcal{S}}}
    \def\UC{{\mathcal{U}}}
    \def\VC{{\mathcal{V}}}
    \def\XC{{\mathcal{X}}}
    \def\YC{{\mathcal{Y}}}
\def\Zb{{\mathbf Z}}    \def\ZC{{\mathcal{Z}}}
\def\Nrm{{\mathrm{N}}}
\def\Srm{{\mathrm{S}}}    
\def\Trm{{\mathrm{T}}}
    \def\ECt{{\tilde{\mathcal{E}}}}
\def\Rhat{{\hat{R}}}
          \def\wdo{{\dot{w}}}
\def\a{\alpha}
\def\g{\gamma}
\def\d{\delta}
\def\D{\Delta}
\def\e{\varepsilon}
\def\ph{\varphi}
\def\l{\lambda}
\def\o{\omega}
\def\O{\Omega}
\def\th{\theta}
\def\t{\tau}
\def\x{\xi}
\def\z{\zeta}
          \def\phit{{\tilde{\phi}}}
\def\mub{{\boldsymbol{\mu}}}
\def\taub{{\boldsymbol{\tau}}}
\DeclareMathOperator{\Cus}{{\mathrm{Cus}}}
\DeclareMathOperator{\diag}{{\mathrm{diag}}}
\DeclareMathOperator{\Id}{{\mathrm{Id}}}
\DeclareMathOperator{\Irr}{{\mathrm{Irr}}}
\DeclareMathOperator{\Ker}{{\mathrm{Ker}}}
\DeclareMathOperator{\Res}{{\mathrm{Res}}}
\def\to{\rightarrow}
\def\longto{\longrightarrow}
\def\injto{\hookrightarrow}
\def\fonctio#1#2#3#4{\begin{array}{ccc}
{#1} & \longto & {#2} \\
{#3} & \longmapsto & {#4} 
\end{array}}
\def\vide{\varnothing}
\def\DS{\displaystyle}
\def\SSS{\scriptscriptstyle}
\def\finl{~$\blacksquare$}
\def\lexp#1#2{\kern\scriptspace\vphantom{#2}^{#1}\kern-\scriptspace#2}
\def\le{\hspace{0.1em}\mathop{\leqslant}\nolimits\hspace{0.1em}}
\def\ge{\hspace{0.1em}\mathop{\geqslant}\nolimits\hspace{0.1em}}
\mathchardef\inferieur="321E
\mathchardef\superieur="321F
\def\eqna{\begin{eqnarray*}}
\def\endeqna{\end{eqnarray*}}
\def\paramset{\nabla}
\def\itemth#1{\item[${\mathrm{(#1)}}$]}
\long\def\@car#1#2\@nil{#1}
\long\def\@first#1#2{#1}
\long\def\@second#1#2{#2}
\long\def\ifempty#1{\expandafter\ifx\@car#1@\@nil @\@empty
  \expandafter\@first\else\expandafter\@second\fi}
\def\GL{{\mathrm{GL}}}
\renewcommand{\Ref}{{\mathrm{Ref}}}
\def\boitegrise#1#2{\begin{centerline}{\fcolorbox{black}{shadecolor}{~
    \begin{minipage}[t]{#2}{\vphantom{~}#1\vphantom{$A_{\DS{A_A}}$}}
            \end{minipage}~}}\end{centerline}\medskip}
\def\ve{{\SSS{\vee}}}
\theoremstyle{remark}
\newtheorem{rema}[theo]{Remark}
\newtheorem{exemple}[theo]{Example}
\theoremstyle{plain}
\def\BIL{LR}
\def\GAUCHE{L}
\def\CAR{CAR}
\def\FAM{FAM}
\def\reg{{\mathrm{reg}}}
\def\grad{{\mathrm{gr}}}
\def\xyinj{\ar@{^{(}->}}
\def\xysur{\ar@{->>}}
\def\hlinewd#1{%
\noalign{\ifnum0=`}\fi\hrule \@height #1 %
\futurelet\reserved@a\@xhline}
\newlength\epaisLigne
\def\split{{\mathrm{spl}}}
\newcommand{\longiso}{\stackrel{\sim}{\longrightarrow}}
\newcommand{\longbij}{\stackrel{\sim}{\longleftrightarrow}}
\def\MCov{{\overline{\MC}}}
\def\LCov{{\overline{\LC}}}
\def\SCov{{\overline{\SC}}}
\def\ICov{{\overline{\IC}}}
\def\hlinewd#1{%
\noalign{\ifnum0=`}\fi\hrule \@height #1 %
\futurelet\reserved@a\@xhline}
\newcommand{\longinjto}{\lhook\joinrel\longrightarrow}
\def\nor{{\mathrm{nor}}}
\def\la{\langle}
\def\ra{\rangle}
\def\gr{\operatorname{gr}\nolimits}
\def\gr{{\mathrm{gr}}}
\def\para{{\mathrm{Parab}}}
\def\pspec{{\mathrm{PSpec}}}
\def\pmax{{\mathrm{PMax}}}
\def\setw{{\mathrm{set}}}
\def\ptw{{\mathrm{pt}}}
\def\symp{{\mathcal{S\!y\!m\!p}}}
\def\rees{{\mathrm{Rees}}}
\newcommand\reallywidehat[1]{%
\savestack{\tmpbox}{\stretchto{%
  \scaleto{%
    \scalerel*[\widthof{\ensuremath{#1}}]{\kern-.6pt\bigwedge\kern-.6pt}%
    {\rule[-\textheight/2]{1ex}{\textheight}}%WIDTH-LIMITED BIG WEDGE
  }{\textheight}% 
}{0.5ex}}%
\stackon[1pt]{#1}{\tmpbox}%
}
\begin{document}

%\baselineskip=16pt
%\large\baselineskip=20pt
%\Large\baselineskip=24pt

\title{Automorphisms and symplectic leaves\\ of Calogero-Moser spaces}

\author{{\sc C\'edric Bonnaf\'e}}
\address{IMAG, Universit\'e de Montpellier, CNRS, Montpellier, France} 

\makeatletter
\email{cedric.bonnafe@umontpellier.fr}
\makeatother

\date{\today}

\thanks{The author is partly supported by the ANR: 
Projects No ANR-16-CE40-0010-01 (GeRepMod) and ANR-18-CE40-0024-02 (CATORE)}

% 

%\tableofcontents

% \noindent{\bf Acknowledgements.} We thank warmly D. Juteau 
% for providing us the main ideas leading to Example~\ref{ex:b2-g2}.
% 
% \bigskip

\begin{abstract}
We study the symplectic leaves of the subvariety of fixed points of an automorphism of a 
Calogero-Moser space induced by an element of finite order of 
the normalizer of the associated complex reflection group. 
We give a parametrization {\it \`a la Harish-Chandra} 
of its symplectic leaves (generalizing  
earlier works of Bellamy and Losev). 
This result is inspired by the mysterious relations between 
the geometry of Calogero-Moser spaces and unipotent representations 
of finite reductive groups, which is the theme of another paper~\cite{cm-unip}.  
\end{abstract}

\maketitle

\pagestyle{myheadings}

\markboth{\sc C. Bonnaf\'e}{\sc Automorphisms and symplectic leaves}

Let $V$ be a finite dimensional vector space and let $W$ be a finite 
subgroup of $\Gb\Lb_\CM(V)$ generated by reflections. To a class 
function $k$ on $W$ supported on the set of reflections, 
Etingof and Ginzburg~\cite{EG} have associated a normal 
irreducible affine complex variety $\ZC_k(V,W)$ called 
a (generalized) {\it Calogero-Moser space}. If $\t$ is an 
element of finite order of the normalizer of $W$ in $\Gb\Lb_\CM(V)$ 
stabilizing the class function $k$, it induces an automorphism 
of $\ZC_k(V,W)$. The main theme of this paper is the study of the 
symplectic leaves of the variety $\ZC_k(V,W)^\t$ 
of its fixed points in $\ZC_k(V,W)$ (endowed with its reduced 
closed subscheme structure). 

Note that $W$ acts trivially on $\ZC_k(V,W)$ so, by replacing 
$\t$ by $w\t$ for some $w \in W$ if necessary, we may assume that 
the natural morphism $V^\t \longto (V/W)^\t$ is onto (the argument 
is due to Springer~\cite{springer} and will be recalled in~\S\ref{sub:springer}): this will 
be assumed throughout this paper and will simplify the statements.

The Poisson bracket on $\ZC_k(V,W)$ induces a 
Poisson bracket on $\ZC_k(V,W)^\t$ and we are interested in 
parametrizing the symplectic leaves of this fixed points subvariety. For this, we define a 
{\it $\t$-cuspidal} symplectic leaf (or a {\it $\t$-cuspidal} point) to be a zero-dimensional 
symplectic leaf of $\ZC_k(V,W)^\t$, 
and we define a {\it $\t$-split} 
parabolic subgroup of $W$ to be the stabilizer of some point in $V^\t$. 
We also denote by $W_\t$ the quotient $\Sigma/\Pi$, where $\Sigma$ (resp. $\Pi$) 
is the setwise (resp. pointwise) stabilizer of $V^\t$. For its action on $V^\t$, 
the group $W_\t$ is a reflection group~\cite{lehrer springer}. 
Our result is as follows:

\bigskip

\noindent{\bf Theorem A.} {\it Assume that the natural morphism $V^\t \longto (V/W)^\t$ is onto. 
Then there is a natural bijection between 
the set of symplectic leaves of $\ZC_k(V,W)^\t$ and 
the set of $W_\t$-orbits of 
pairs $(P,p)$, where $P$ is a $\t$-split parabolic subgroup 
and $p$ is a $\t$-cuspidal point of $\ZC_{k_P}(V_P,P)^\t$. 

Moreover, the dimension of the symplectic leaf associated with $(P,p)$ through this bijection 
is equal to $2 \dim (V^P)^\t$.}

\bigskip

Here, $k_P$ is the restriction of $k$ to $P$. We will give in 
Section~\ref{sec:tau-hc} an explicit description of the bijection. If $\t=1$, this 
result was proved by Bellamy~\cite{bellamy cuspidal} and Losev~\cite{losev} and 
might be viewed as a {\it Harish-Chandra theory} of symplectic 
leaves. So Theorem~A can be thought as a {\it $\t$-Harish-Chandra 
theory}, inspired by Brou\'e-Malle-Michel {\it $d$-Harish-Chandra 
theory} of unipotent representations of finite reductive groups~\cite{BMM} 
(see~\cite{cm-unip} for a further discussion of this analogy and 
applications of Theorem~A). The main point 
is to combine Springer/Lehrer-Springer theory (which describes the action 
of the setwise stabilizer of $V^\t$ on $V^\t$) with Bellamy/Losev works. 
We propose the following conjecture about the geometry of symplectic leaves 
of $\ZC_k(V,W)^\t$. 

\bigskip

\centerline{\begin{minipage}{0.7\textwidth}
\noindent{\bf Conjecture B.} {\it Let $(P,p)$ be as in Theorem~A and let $\SC$ 
denote the corresponding symplectic leaf of $\ZC_k(V,W)^\t$. 
Then there exists a parameter $l$ for the pair $((V^P)^\t,\Nrm_{W_\t}(P_\t)/P_\t)$ 
and a $\CM^\times$-equivariant isomorphism of Poisson varieties
$$\overline{\SC}^\nor \simeq \ZC_l((V^P)^\t,\Nrm_{W_\t}(P_\t)/P_\t).$$
Here, $\overline{\SC}^\nor$ denotes the normalization of the closure of $\SC$.} 
\end{minipage}}

\bigskip

Note that this conjecture is not known even in the case where $\t=\Id_V$ 
(in which case $W_\t=W$ and $P_\t=P$). 
It has been proved by Maksimau and the author~\cite{bonnafe maksimau} 
whenever $\ZC_k(V,W)$ is smooth and $\t \in W\cdot \CM^\times$. 

\bigskip

The paper is organized as follows. We recall the set-up (reflection groups, 
Cherednik algebras, Calogero-Moser spaces...) in the first section and the 
second section is a recollection of useful results on Poisson structures 
and symplectic leaves. In the third section, we recall the main results 
of Lehrer-Springer on the group $W_\t$ and some of its consequences. 
In the fourth section, we restate Theorem~A and Conjecture~B in more precise 
terms. The proof of Theorem~A is given in Sections~5 to~8 (see the end 
of Section~4 for the description of the different steps). 
We propose in the nineth section an overview of the known cases for Conjecture~B. 
A short appendix summarizes easy results about completions of rings 
that are needed in Section~8 to conclude the proof of Theorem~A.

\bigskip

\noindent{\bf Acknowledgements.} 
We thank warmly Gwyn Bellamy for sharing his files containing 
preliminary versions of his upcoming papers with Chalykh~\cite{be-ch} 
and Maksimau and Schedler~\cite{be-sc}, and for his help for understanding his work 
on symplectic leaves of Calogero-Moser spaces. 

We also wish to thank warmly 
Jean Michel for very long and fruitful discussions about $d$-Harish-Chandra 
theory of unipotent representations of finite reductive groups: even 
if it does not appear in this text (but will be explained in~\cite{cm-unip}), all what I learned 
from him was a great source of inspiration for this paper.

\bigskip

\section{Set-up}\label{sec:notation}

\medskip

\subsection{Complex numbers} 
Throughout this paper, we will abbreviate $\otimes_\CM$ as 
$\otimes$ and all varieties will be algebraic, complex, quasi-projective 
and reduced. If $\XC$ is an irreducible variety, we denote by 
$\XC^\nor$ its normalization. If $\XC$ is an affine variety, we denote by $\CM[\XC]$ 
its coordinate ring: if moreover $\XC$ is irreducible, then $\XC^\nor$ is also 
affine and $\CM[\XC^\nor]$ is the integral closure of $\CM[\XC]$ in its fraction 
field (which will be denoted by $\CM(\XC)$). 

% If $\t$ is an automorphism of the algebraic variety $\XC$, 
% we denote by $\XC^{(\t)}$ the {\it scheme} of fixed points 
% of $\t$ in $\XC$ (i.e. the fiber product of the diagonal 
% morphism $\XC \to \XC \times \XC$ with the graph of $\t$). 
% Note that it might not be reduced and we denote by $\XC^\t$ 
% its reduced subscheme: it is the closed subvariety of $\XC$ 
% whose points are the points $x \in \XC$ such that $\t(x)=x$. 
% If $\XC$ is smooth and $\t$ has finite order, then $\XC^{(\t)}$ 
% is also smooth (in particular, it is reduced and so $\XC^{(\t)}=\XC^\t$). 
% If $\XC$ is affine, let $I_\t$ denote the ideal 
% $\langle (\t(a)-a)_{a \in \CM[\XC]} \rangle$ of $\CM[\XC]$. Then 
% $\XC^{(\t)}=\Spec \CM[\XC]/I_\t$ and $\CM[\XC^\t]=\CM[\XC]/\sqrt{I_\t}$. 

\def\Nrmov{{\overline{\mathrm{N}}}}

We fix in this paper a complex vector space $V$ of finite dimension $n$. 
If $X$ is a subset of $V$ (or $V^*$), and if $G$ is a subgroup 
of $\Gb\Lb_\CM(V)$, we denote by $G_X^\setw$ (resp. $G_X^\ptw$) the setwise 
(resp. pointwise) stabilizer of $X$ and we set $G[X]=G_X^\setw/G_X^\ptw$. 
Then $G[X]$ acts faithfully on $X$ (and on the vector space spanned by $X$). 
If $X=\{v\}$ is a singleton, then $G_X^\setw=G_X^\ptw$ (and we will denote both simply 
by $G_X$ or $G_v$) and $G[X]=1$. If $H$ is a subgroup of $G$, we set 
$\Nrmov_G(H)=\Nrm_G(H)/H$. 

If moreover $G$ is finite, 
we identify $(V^G)^*$ and $(V^*)^G$, and we denote by $V_G$ 
the unique $G$-stable subspace of $V$ such that $V=V_G \oplus V^G$. 

\bigskip

\subsection{Reflections} 
Let $W$ be a finite 
subgroup of $\GL_\CM(V)$. We set
$$\Ref(W)=\{s \in W~|~\dim_\CM V^s=n-1\}$$
and note that, for the moment, we {\it do not} assume that 
$W$ is generated by $\Ref(W)$. 
We set $\e : W \to \CM^\times$, $w \mapsto \det(w)$. 
We identify $\CM[V]$ (resp. $\CM[V^*]$) with the symmetric 
algebra $\Srm(V^*)$ (resp. $\Srm(V)$).

We denote by $\AC$ the set of {\it reflecting hyperplanes} of $W$, namely
$$\AC=\{V^s~|~s \in \Ref(W)\}.$$
If $H \in \AC$, we denote by $\a_H$ an element of $V^*$ such that 
$H=\Ker(\a_H)$ and by $\a_H^\vee$ an element of $V$ such that 
$V=H \oplus \CM \a_H^\vee$ and the line $\CM\a_H^\vee$ is $W_H^\ptw$-stable.
We set $e_H=|W_H^\ptw|$. Note that $W_H^\ptw$ is cyclic of order $e_H$ and that 
$\Irr(W_H^\ptw)=\{\Res_{W_H^\ptw}^W \e^j~|~0 \le j \le e-1\}$. We denote by $\e_{H,j}$ 
the (central) primitive idempotent of $\CM W_H$ associated with the character 
$\Res_{W_H^\ptw}^W \e^{-j}$, namely
$$\e_{H,j}=\frac{1}{e_H}\sum_{w \in W_H^\ptw} \e(w)^j w \in \CM W_H^\ptw.$$
If $\O$ is a $W$-orbit of reflecting hyperplanes, we write $e_\O$ for the 
common value of all the $e_H$, where $H \in \O$. 
We denote by $\paramset$ the set of pairs $(\O,j)$ where $\O \in \AC$ and 
$0 \le j \le e_\O-1$. 
The vector space of families of complex numbers 
indexed by $\paramset$ will be denoted by $\CM^\paramset$: elements 
of $\CM^\paramset$ will be called {\it parameters}. 
If $k=(k_{\O,j})_{(\O,j) \in \paramset} \in \CM^\paramset$, we 
define $k_{H,j}$ for all $H \in \O$ and $j \in \ZM$ by 
$k_{H,j}=k_{\O,j_0}$ where $\O$ is the $W$-orbit of $H$ 
and $j_0$ is the unique element of $\{0,1,\dots,e_H-1\}$ such that 
$j \equiv j_0 \mod e_H$. 

\bigskip

\subsection{Parabolic subgroups}\label{sub:para}
We denote by $\para(W)$ the set of parabolic subgroups of $W$ 
(i.e. the set of subgroups of $W$ which are stabilizers of some point 
of $V$) and by $\para(W)/W$ the set of conjugacy classes of parabolic 
subgroups of $W$. 
If $P \in \para(W)$, we denote by 
$\VC(P)$ the set of elements 
$v \in V$ such that $W_v=P$: it is a non-empty open subset of 
$V^P$. By definition, $W_{V^P}^\ptw=P$ 
and $W_{V^P}^\setw=\Nrm_W(P)$, so that $W[V^P]=\Nrmov_W(P)$.  
The family $(\VC(P))_{P \in \para(W)}$ 
is a stratification of $V$ (the order between strata corresponds to 
the reverse order of the inclusion of parabolic subgroups). 

This stratification is stable under the action of the group $W$. 
If $\PG \in \para(W)/W$, we denote by $\UC(\PG)$ the image of 
$\VC(P)$ in $V/W$, where $P$ is any element of $\PG$. Then 
$(\UC(\PG))_{\PG \in \para(W)/W}$ is a stratification of $V/W$ (the order 
between strata corresponds to the reverse order of the inclusion, 
up to conjugacy, of parabolic subgroups). Replacing $V$ by $V^*$, 
we define similarly $\VC^*(P)$ and $\UC^*(\PG)$ for $P \in \para(W)$ 
and $\PG \in \para(W)/W$. By definition, $\Nrmov_W(P)$ acts freely on 
$\VC(P)$ or $\VC^*(P)$. Moreover, for $P \in \PG$, 
the natural map $\VC(P) \to \UC(\PG)$ induces an isomorphism of varieties 
\equat\label{eq:vp-wp-up}
\VC(P)/\Nrmov_W(P) \longiso \UC(\PG).
\endequat
In particular, $\UC(\PG)$ is smooth.

\bigskip

\subsection{Rational Cherednik algebra at ${\boldsymbol{t=0}}$}
Let $k \in \CM^\paramset$. 
We define the {\it rational Cherednik algebra $\Hb_k$} ({\it at $t=0$}) to be the quotient 
of the algebra $\Trm(V\oplus V^*)\rtimes W$ (the semi-direct product of the tensor algebra 
$\Trm(V \oplus V^*)$ with the group $W$) 
by the relations 
\equat\label{eq:rels}
\begin{cases}
[x,x']=[y,y']=0,\\
[y,x]=\DS{\sum_{H\in\mathcal{A}} \sum_{j=0}^{e_H-1}
e_H(k_{H,j}-k_{H,j+1}) 
\frac{\langle y,\a_H \rangle \cdot \langle \a_H^\ve,x\rangle}{\langle \a_H^\ve,\a_H\rangle} \e_{H,j}},
\end{cases}
\endequat
for all $x$, $x'\in V^*$, $y$, $y'\in V$. 
Here $\la\ ,\ \ra: V\times V^*\to\CM$ is the standard pairing. 
The first commutation relations imply that 
we have morphisms of algebras $\CM[V] \to \Hb_k$ and $\CM[V^*] \to \Hb_k$. 
Recall~\cite[Theo.~1.3]{EG} 
that we have an isomorphism of $\CM$-vector spaces 
\equat\label{eq:pbw}
\CM[V] \otimes \CM W \otimes \CM[V^*] \longiso \Hb_k
\endequat
induced by multiplication (this is the so-called {\it PBW-decomposition}). 

\medskip

\begin{rema}\label{rem:parametres particuliers}
Let $(l_\O)_{\O \in \AC/W}$ be a family of complex numbers and let 
$k' \in \CM^\paramset$ be defined by $k_{\O,j}'=k_{\O,j} + l_\O$. Then 
$\Hb_k=\Hb_{k'}$. This means that there is no restriction to generality 
if we consider for instance only 
parameters $k$ such that $k_{\O,0}=0$ for all $\O$, 
or only parameters $k$ such that $k_{\O,0}+k_{\O,1}+\cdots+k_{\O,e_\O-1}=0$ 
for all $\O$ (as in~\cite{calogero}).\finl
\end{rema}

\medskip

\subsection{Calogero-Moser space} 
We denote by $\Zb_k$ the center of the algebra $\Hb_k$: 
it is well-known~\cite[Theo~3.3~and~Lem.~3.5]{EG} that 
$\Zb_k$ is an integral domain, which is integrally closed. Moreover, it contains 
$\CM[V]^W$ and $\CM[V^*]^W$ as subalgebras~\cite[Prop.~3.6]{gordon} 
(so it contains $\Pb=\CM[V]^W \otimes \CM[V^*]^W$), 
and it is a free $\Pb$-module of rank $|W|$. We denote by $\ZC_k$ the 
affine algebraic variety whose ring of regular functions $\CM[\ZC_k]$ is $\Zb_k$: 
this is the {\it Calogero-Moser space} associated with the datum $(V,W,k)$. 
It is irreducible and normal. 

We set $\PC=V/W \times V^*/W$, so that $\CM[\PC]=\Pb$ and the inclusion 
$\Pb \injto \Zb_k$ induces a morphism of varieties 
$$\Upsilon_k : \ZC_k \longto \PC$$
which is finite (and flat if $W=\langle \Ref(W) \rangle$). 

% Using the PBW-decomposition, we define a $\CM$-linear map 
% $\Omeb^{\Hb_k} : \Hb_k \longto \CM W$
% by 
% $$\Omeb^{\Hb_k}(f w g)=f(0)g(0)w$$
% for all $f \in \CM[V]$, $g \in \CM[V^*]$ and $w \in \CM W$. This map is $W$-equivariant 
% for the action on both sides by conjugation, so it induces a well-defined $\CM$-linear map 
% $$\Omeb^k : \Zb_k \longto \Zrm(\CM W).$$
% Recall from~\cite[Cor.~4.2.11]{calogero} that $\Omeb^k$ is a morphism of algebras, and that 
% \equat\label{eq:lissite-omega}
% \text{\it $\ZC_k$ is smooth if and only if $\Omeb^k$ is surjective.}
% \endequat
% The ``only if'' part is essentially due to Gordon~\cite[Cor.~5.8]{gordon} 
% (but the reader must see also~\cite[Prop.~9.6.6~and~(16.1.2)]{calogero} 
% for translating Gordon's result in terms of $\Omeb^k$) while 
% the ``if'' part follows from the work of Bellamy, Schedler and Thiel~\cite[Cor.~1.4]{BST}.

\bigskip

% \subsection{Other parameters} 
% Let $\CC$ denote the space of maps $\Ref(W) \to \CM$ 
% which are constant on conjugacy classes of reflections. 
% The element
% $$\sum_{(\O,j) \in \paramset} \sum_{H \in \O} (k_{H,j}-k_{H,j+1}) e_H \e_{H,j}$$
% of $\Zrm(\CM W)$ is supported only by reflections, so there exists 
% a unique map $c_k \in \CC$ such that 
% $$\sum_{(\O,j) \in \paramset} \sum_{H \in \O} (k_{H,j}-k_{H,j+1}) e_H \e_{H,j}
% = \sum_{s \in \Ref(W)} (\e(s)-1) c_k(s) s.$$
% Then the map $\CM^\paramset \to \CC$, $k \mapsto c_k$ is linear and surjective. 
% With this notation, we have 
% \equat\label{eq:cs}
% [y,x] = \sum_{s \in \Ref(W)} (\e(s)-1)\hskip1mm c_k(s) 
% \hskip1mm
% \frac{\langle y,\a_s \rangle \cdot \langle \a_s^\ve,x\rangle}{\langle 
% \a_s^\ve,\a_s\rangle}
% \hskip1mm s,
% \endequat
% for all $y \in V$ and $x \in V^*$.
% 
% \bigskip

\subsection{Extra-structures on the Calogero-Moser space}
The Calogero-Moser space $\ZC_k$ is endowed with extra-structures 
(a $\CM^\times$-action, a Poisson bracket, a filtration, 
an action of $\Nrm_{\Gb\Lb_\CM(V)}(W)$...) 
which are described below.

\bigskip

\subsubsection{Grading, $\CM^\times$-action}\label{subsub:action}
The algebra $\Trm(V\oplus V^*)\rtimes W$ can be $\ZM$-graded in such a way that the 
generators have the following degrees
$$
\begin{cases}
\deg(y)=-1 & \text{if $y \in V$,}\\
\deg(x)=1 & \text{if $x \in V^*$,}\\
\deg(w)=0 & \text{if $w \in W$.}
\end{cases}
$$
This descends to a $\ZM$-grading on $\Hb_k$, because the defining relations~(\ref{eq:rels}) 
are homogeneous. Since the center of a graded algebra is always graded, the subalgebra $\Zb_k$ 
is also $\ZM$-graded.  So the Calogero-Moser space $\ZC_k$ 
inherits a regular $\CM^\times$-action. Note also that
by definition $\Pb=\CM[V]^W \otimes \CM[V^*]^W$ is clearly a graded 
subalgebra of $\Zb_k$. 

\bigskip

\subsubsection{Poisson structure}
Let $t \in \CM$. One can define a deformation $\Hb_{t,k}$ of $\Hb_k$ as follows: 
$\Hb_{t,k}$ is the quotient 
of the algebra $\Trm(V\oplus V^*)\rtimes W$ 
by the relations 
\equat\label{eq:rels-1}
\begin{cases}
[x,x']=[y,y']=0,\\
[y,x]=t \la y,x \ra + \DS{\sum_{H\in\mathcal{A}} \sum_{=0}^{e_H-1}
e_H(k_{H,i}-k_{H,i+1}) 
\frac{\langle y,\a_H \rangle \cdot \langle \a_H^\ve,x\rangle}{\langle \a_H^\ve,\a_H\rangle} \e_{H,i}},
\end{cases}
\endequat
for all $x$, $x'\in V^*$, $y$, $y'\in V$. It is well-known~\cite{EG} 
that the PBW decomposition still holds so 
that the family $(\Hb_{t,k})_{t \in \CM}$ is a flat deformation of $\Hb_k=\Hb_{0,k}$. 
This allows to define a Poisson bracket $\{\ ,\ \}$ on $\Zb_k$ as follows: 
if $z_1$, $z_2 \in \Zb_k$, 
we denote by $z_1^{t}$, $z_2^t$ the corresponding element of $\Hb_{t,k}$ through the 
PBW decomposition and we define 
$$\{z_1,z_2\} = \lim_{t \to 0} \frac{[z_1^t,z_2^t]}{t}.$$
Finally, note that 
\equat\label{eq:poisson equivariant}
\text{\it The Poisson bracket is $\CM^\times$-equivariant.}
\endequat
% Recall the following important result of Bellamy~\cite[Theo.~1.4]{bellamy counting}, 
% which follows works of Ginzburg-Kaledin~\cite{GK} and Namikawa~\cite{namikawa 1},~\cite{namikawa 2}:
% 
% \bigskip
% 
% \begin{theo}[Bellamy]
% Let $\XC$ be an affine Poisson variety which is a Poisson deformation of the 
% variety $\ZC_0=(V \times V^*)/W$. Then there exists $k \in \CM^\paramset$ such 
% that $\XC \simeq \ZC_k$ as a Poisson variety.
% \end{theo}
% 
\bigskip

% \subsubsection{Euler element}
% Let $(y_1,\dots,y_n)$ be a basis of $V$ and let $(x_1,\dots,x_n)$ 
% denote its dual basis. As in~\cite[??]{calogero}, we set
% $$\euler % = ~\sum_{j=1}^n x_j y_j + \sum_{s \in \Ref(W)} \e(s) c_k(s) s 
% =~\sum_{j=1}^n x_j y_j +
% \sum_{H\in\AC}\sum_{j=0}^{e_H-1}e_H\ k_{H,j}\varepsilon_{H,j}.$$
% Recall that $\euler$ does not depend on the choice of the basis of $V$. 
% Also
% \equat\label{eq:euler centre}
% \euler \in \Zb_k,\qquad \Frac(\Zb_k)=\Frac(\Pb)[\euler]
% \endequat
% and
% \equat\label{eq:euler poisson}
% \{\euler,z\}=d z 
% \endequat
% if $z \in \Zb_k$ is homogeneous of degree $d$ 
% (see for instance~\cite[??]{calogero}).\todo{Utile ?}
% 
% \bigskip

\subsubsection{Filtration} The tensor algebra $\Trm(V \oplus V^*)$ is naturally filtered 
by the subspaces $\bigl(\bigoplus_{j=0}^d (V \oplus V^*)^{\otimes j}\bigr)$. This 
induces a filtration of $\Trm(V \oplus V^*) \rtimes W$ by putting $W$ in degree $0$ and 
so induces a filtration $(\FC_j \Hb_k)_{j \geqslant 0}$ of the rational Cherednik algebra. 
By convention, we set $\FC_{-1} \Hb_k=0$. 
If $M$ is any subspace of $\Hb_k$, we set $\FC_j M=M \cap \FC_j\Hb_k$, so that 
$M$ also inherits a filtration and we denote by $\rees_\FC M$ the {\it Rees module} of $M$ 
(associated with the filtration $(\FC_j X)_{j \geqslant 0}$), namely the $\CM[\hbar]$-submodule  
of $\CM[\hbar] \otimes M$ equal to
$$\rees_\FC M=\bigoplus_{j \ge 0} \hbar^j \FC_j M.$$
Recall that, if $\l \in \CM$, then 
\equat\label{eq:rees-facile}
\CM[\hbar]/\langle \hbar - \l\rangle \otimes_{\CM[\hbar]} \rees_\FC M
\simeq
\begin{cases}
M & \text{if $\l \neq 0$,}\\
\grad_\FC(M) & \text{if $\l=0$,}\\
\end{cases}
\endequat
where $\grad_\FC(M)=\bigoplus_{j \ge 0} \FC_j M /\FC_{j-1} M$ is the graded vector 
space associated with $M$ and its filtration. 

If $A$ is a subalgebra of $\Hb_k$ and $J$ is an ideal of $A$, then 
$\rees_\FC(A)$ is a subalgebra of $\CM[\hbar] \otimes A$ (called the {\it Rees 
algebra} of $A$) and $\rees_\FC(J)$ is an ideal of $\rees_\FC(A)$. Recall~\cite[Theo.~1.3]{EG} that
\equat\label{eq:rees}
\grad_\FC\Hb_k \simeq \Hb_0 =\CM[V \times V^*] \rtimes W 
\qquad\text{and}\qquad \grad_\FC \Zb_k \simeq \Zb_0=\CM[V \times V^*]^W.
\endequat

\bigskip

\subsubsection{Action of the normalizer} 
The group $\Nrm_{\Gb\Lb_\CM(V)}(W)$ acts on the set $\paramset$ and so on 
the space of parameters $\CM^\paramset$. If $\t \in \Nrm_{\Gb\Lb_\CM(V)}(W)$, 
then $\t$ induces an isomorphism of algebras $\Hb_k \longto \Hb_{\t(k)}$. 
So, if $\t(k)=k$, then it induces an action on the algebra $\Hb_k$, 
and so on its center $\Zb_k$ and on the Calogero-Moser space $\ZC_k$, which preserves 
the $\CM^\times$-action and the Poisson bracket. We set
$$\d(\t)=\max_{w \in W} \dim V^{w\t}.$$
Of course, $\d(\t)$ depends only on the coset $W\t$ and not on $\t$. 
We say that $\t$ is {\it $W$-full} if $\d(\t)=\dim V^\t$. 
Since $W$ acts trivially on $\ZC_k$, the study of the action 
of $\t$ on $\ZC_k$ is equivalent to the study of the action of $w\t$. 
So, by replacing $\t$ by $w\t$ if necessary, we may assume that 
$\t$ is $W$-full.

% Finally, if $S$ is a subset of $\Nrm_{\Gb\Lb_\CM(V)}(W)$, we set $W(S)=W[V^{\langle S \rangle}]$. 
% In other words, $W(S)=W_{V^{\langle S \rangle}}^\setw/W_{V^{\langle S \rangle}}^\ptw$: the group 
% $W(S)$ acts faithfully on $V^{\langle S \rangle}$. 

\bigskip

\begin{exemple}\label{ex:regular}
An element $\t \in \Nrm_{\Gb\Lb_\CM(V)}(W)$ is called {\it $W$-regular} 
(or simply {\it regular} if $W$ is clear from the context) if $V^\t \cap V_\reg \neq \vide$. 
A $W$-regular element of $\Nrm_{\Gb\Lb_\CM(V)}(W)$ is $W$-full~\cite{springer}.\finl
\end{exemple}

\bigskip

\boitegrise{{\bf Hypothesis and notation.} 
{\it From now on, and until the end of this paper, we assume that
$$W=\langle \Ref(W) \rangle,$$
we fix a parameter $k \in \CM^\paramset$ 
and an element $\t$ of {\bfit finite} order of $\Nrm_{\Gb\Lb_\CM(V)}(W)$ such that $\t(k)=k$. 
We also assume that $\t$ is {\bfit $W$-full}. \\
\hphantom{AA} If $\bigstar$ is one of the objects 
defined in the previous sections ($\Hb_k$, $\ZC_k$, $\paramset$, $\AC$,\dots), 
we will sometimes denote it by $\bigstar(W)$ or $\bigstar(V,W)$ if we need 
to emphasize the context.}}{0.82\textwidth}

\bigskip

\section{Recollection about Poisson structures and symplectic leaves}

\medskip

\boitegrise{{\bf Notation.} {\it We fix in this section, and only in this section, 
a commutative noetherian Poisson $\CM$-algebra $R$, whose Poisson bracket 
is denoted by $\{,\}$.}}{0.75\textwidth}

\bigskip

\subsection{Poisson ideals} 
An ideal $I$ of $R$ is called a {\it Poisson ideal} of $R$ if $\{r,I\} \subset I$ 
for all $r \in R$. The following facts may be found in~\cite[Lem.~3.3.3]{dixmier}:

\bigskip

\begin{prop}\label{prop:poisson-basique}
Let $I$ be a Poisson ideal of $R$. Then:
\begin{itemize}
\itemth{a} Every minimal prime ideal containing $I$ is Poisson.

\itemth{b} The radical of $I$ is Poisson.
\end{itemize}
\end{prop}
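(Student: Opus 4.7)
My plan is to prove (a) first and then deduce (b) as a quick consequence.

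\emph{For (a),} the approach is a standard localization argument. Let $\pG$ be a minimal prime containing $I$. The Poisson bracket on $R$ extends uniquely to $R_\pG$ via the Leibniz rule (formally, $\{a,1/s\}=-\{a,s\}/s^2$), and under this extension $\pG$ is Poisson in $R$ if and only if $\mG := \pG R_\pG$ is Poisson in $R_\pG$; this equivalence uses $\pG R_\pG \cap R = \pG$ and that contraction and extension along a flat map preserve the defining Leibniz condition. So we may replace $R$ by $R_\pG$ and $I$ by $IR_\pG$: the problem becomes showing that in a Noetherian local Poisson $\CM$-algebra with maximal ideal $\mG$, if $I$ is Poisson and $\sqrt{I}=\mG$, then $\mG$ is Poisson. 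By Noetherianity, $\mG^n \subseteq I$ for some $n \ge 1$.

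The key computation is the following. Fix $a \in \mG$ and $r \in R_\pG$. If $a \in I R_\pG$ there is nothing to do. Otherwise let $n \ge 2$ be minimal with $a^n \in I R_\pG$. Applying Leibniz gives
\[
n a^{n-1}\{r,a\} = \{r,a^n\} \in IR_\pG,
\]
and since $n \in \CM^\times$ we get $a^{n-1}\{r,a\} \in IR_\pG$. Were $\{r,a\}$ a unit of the local ring $R_\pG$, this would force $a^{n-1} \in IR_\pG$, contradicting the minimality of $n$; hence $\{r,a\}$ is a non-unit, i.e.\ $\{r,a\}\in \mG$.

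\emph{For (b),} Noetherianity guarantees that $\sqrt{I}$ is the (finite) intersection of the minimal primes over $I$; each of them is Poisson by (a), and an arbitrary intersection of Poisson ideals is trivially Poisson (Leibniz is preserved under intersection). Hence $\sqrt{I}$ is Poisson.

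The only delicate step is checking that the ``Poisson'' property behaves well under localization at $\pG$ in both directions; once this is granted, the proof reduces to the slick minimal-power computation above, whose only substantive input is characteristic zero.
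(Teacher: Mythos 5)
Your argument is correct. Note first that the paper gives no proof of this proposition; it simply cites Dixmier~\cite[Lem.~3.3.3]{dixmier}. The usual textbook route (and, as far as I recall, Dixmier's) runs in the opposite order from yours: one proves (b) directly by iterating Leibniz---starting from $a^n\in I$ one shows by induction that $a^{n-k}\{r,a\}^{2k-1}\in I$, hence $\{r,a\}^{2n-1}\in I$ and $\{r,a\}\in\sqrt I$---and then deduces (a) by a prime-avoidance device: if $\pG_1,\dots,\pG_s$ are the minimal primes over $I$, $x\in\pG_1$, and $y\in\bigl(\bigcap_{i=2}^s\pG_i\bigr)\setminus\pG_1$, then $xy\in\sqrt I$ gives $y\{r,x\}+x\{r,y\}=\{r,xy\}\in\sqrt I\subseteq\pG_1$, and since $\pG_1$ is prime and $y\notin\pG_1$ this forces $\{r,x\}\in\pG_1$. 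You instead prove (a) first, localizing at $\pG$ so that the minimal-exponent/unit dichotomy in a Noetherian local ring does all the work, and then (b) falls out as a formal consequence. Both arguments rest on exactly the same characteristic-zero input (inverting the integer $n$). Yours trades the short prime-avoidance step for the preliminary work of extending the Poisson bracket to $R_\pG$ and transferring Poisson ideals across the localization map; you rightly flag this as the only delicate point, and it is a routine (if slightly tedious) verification. One cosmetic remark: writing $\pG R_\pG\cap R=\pG$ implicitly treats $R$ as a subring of $R_\pG$, which can fail when $R$ has zero-divisors; what you actually use, and what is true, is that $\pG$ is the preimage of $\pG R_\pG$ under the canonical map $R\to R_\pG$.
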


\bigskip

% If $I$ is an ideal of $R$, we define its {\it Poisson core} $\CC(I)$ 
% to be the biggest Poisson ideal contained in $I$. If $I$ is prime, 
% it follows from Proposition~\ref{prop:poisson-basique}(a) that $\CC(I)$ 
% is also prime. The ideal $I$ is called {\it Poisson primitive} if it is 
% equal to $\CC(\mG)$ for some maximal ideal $\mG$ of $R$. In particular, 
% a Poisson primitive ideal is prime and Poisson. The converse holds in some 
% cases~\cite[Lem.~3.4]{brown gordon}.\todo{Utile ?}
% 
% \bigskip
% 
% \begin{prop}\label{prop:poisson-primitif}
% Assume that there are only finitely many Poisson primitive ideals of $R$. 
% Then every prime and Poisson ideal of $R$ is primitive.
% \end{prop}
% 
% \bigskip
% 
% \begin{proof}
% Let $\pG$ be a prime and Poisson ideal of $R$. Since $R$ is noetherian, 
% $\pG$ is the intersection of the maximal ideals containing $\pG$ (ref???). 
% Consequently, $\pG$ is the intersection of the Poisson primitive ideals 
% containing $\pG$. As there are only finitely many Poisson primitive ideals, 
% the result follows from~\cite[Chap.~II,~\S{1},~Prop.~1]{bourbaki}.
% \end{proof}
% 
% \bigskip
% 
% \begin{exemple}\label{ex:leaves}
% Let $\XC$ be an affine variety and assume here that $R=\CM[\XC]$ is endowed 
% with a Poisson bracket. Then Brown-Gordon~\cite{brown gordon} have defined a partition of 
% $\XC$ into symplectic leaves and symplectic leaves are in one-to-one correspondance 
% with Poisson primitive ideals of $\CM[\XC]$.
% \end{exemple}
% 
% 

\subsection{Normalisation} 
The next result is due to Kaledin~\cite{kaledin}:

\bigskip

\begin{theo}[Kaledin]\label{theo:kaledin}
Assume that $R$ is a domain. 
Then there is a unique Poisson bracket on the normalisation of $R$ extending $\{,\}$.
\end{theo}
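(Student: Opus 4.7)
The plan is to extend the bracket first to the fraction field $K=\Frac(R)$, and then to show that the integral closure $R^\nor$ of $R$ in $K$ is stable under this extended bracket. Uniqueness is essentially a bookkeeping matter once existence is granted.

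\emph{Extension to the fraction field.} First I would localize: for any multiplicative subset $S\subset R$, the quotient rule
\[
\{a/b,\,c/d\}_K \;=\; \frac{\{a,c\}}{bd}\;-\;\frac{a\{b,c\}}{b^2 d}\;-\;\frac{c\{a,d\}}{bd^2}\;+\;\frac{ac\{b,d\}}{b^2 d^2}
\]
defines a Poisson bracket on $S^{-1}R$ (one checks that all Poisson axioms localize because each Hamiltonian $\{r,\cdot\}$ is a derivation). Taking $S=R\setminus\{0\}$ gives a Poisson bracket $\{,\}_K$ on $K$ extending the given one. This already forces uniqueness: any Poisson bracket on $R^\nor$ extending $\{,\}$ must, upon passing to $K$, agree with $\{,\}_K$, hence is determined.

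\emph{Stability of $R^\nor$.} The heart of the proof is the following instance of a theorem of Seidenberg: if $A$ is a noetherian domain of characteristic $0$ with fraction field $K$ and integral closure $A^\nor\subset K$, then every derivation $D\colon A\to K$ extends uniquely to a derivation $A^\nor\to K$, and this extension automatically sends $A^\nor$ into $A^\nor$. The quick reason for the stability statement is that if $x\in A^\nor$ satisfies a monic relation $x^n+a_{n-1}x^{n-1}+\cdots+a_0=0$ with $a_i\in A$, then differentiating gives a relation of the form
\[
\bigl(n x^{n-1}+(n-1)a_{n-1}x^{n-2}+\cdots+a_1\bigr)\cdot D(x) \;=\; -\sum_{i=0}^{n-1} D(a_i)\,x^i,
\]
and a standard argument using the minimal polynomial shows $D(x)$ is integral over $A$ (this is where characteristic zero enters, to ensure the leading coefficient is nonzero).

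\emph{Assembly.} For each $r\in R$, the Hamiltonian $\{r,\cdot\}\colon R\to R$ is a derivation, hence by Seidenberg extends (uniquely) to a derivation of $R^\nor$ which, by construction, agrees with $\{r,\cdot\}_K$. In particular $\{r,R^\nor\}_K\subset R^\nor$ for all $r\in R$. Now fix $y\in R^\nor$ and consider the map
\[
D_y\colon R\longrightarrow R^\nor,\qquad r\longmapsto \{r,y\}_K,
\]
which is well-defined by the previous step and is a derivation. Applying Seidenberg once more extends $D_y$ to a derivation $R^\nor\to R^\nor$; by uniqueness of the extension to $K$, this extension is the restriction of $\{\cdot,y\}_K$. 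Hence $\{x,y\}_K\in R^\nor$ for all $x,y\in R^\nor$. The bilinearity, antisymmetry, Leibniz rule and Jacobi identity are inherited from $K$, so $\{,\}_K$ restricts to a Poisson bracket on $R^\nor$ extending $\{,\}$.

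\emph{Main obstacle.} The only non-formal ingredient is the extension-of-derivations result (Seidenberg's theorem); everything else is an application of the universal property of localization and of the Hamiltonian/derivation dictionary. If one wanted to avoid citing Seidenberg, one would have to do the integrality argument above by hand, which is the step where characteristic zero is genuinely used.
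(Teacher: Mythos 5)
The paper does not give a proof of this theorem: it is stated as a recollection and attributed to Kaledin's article \emph{Normalization of a Poisson algebra is Poisson} (reference [Kal]). So the only thing I can do is assess your proposed argument on its own.

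Your plan (extend $\{,\}$ to the fraction field $K$ via the quotient rule, then use a Seidenberg-type extension theorem for derivations to pull the bracket back into $R^\nor$) is a natural line of attack, and the first step and the uniqueness argument are fine. However, there is a genuine gap in the second application of Seidenberg. First, a minor but telling point: the version of Seidenberg's theorem you quote (``every derivation $D\colon A\to K$ extends\dots and the extension automatically sends $A^\nor$ into $A^\nor$'') is false as stated --- take $A=\CM[x]$, which is already normal, and $D(x)=1/x$; then $D(A)\not\subset A^\nor$. The correct statement (for $A$ Noetherian, containing $\QM$, and with $A^\nor$ module-finite over $A$, e.g.\ $A$ excellent) requires the hypothesis $D(A)\subset A$, and the conclusion is then $D(A^\nor)\subset A^\nor$.

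This matters, because in your ``Assembly'' step the derivation $D_y=\{\cdot,y\}_K$ for $y\in R^\nor$ only satisfies $D_y(R)\subset R^\nor$ (by the first step), not $D_y(R)\subset R$. The implication ``$D(R)\subset R^\nor\Rightarrow D(R^\nor)\subset R^\nor$'' is simply not true for an arbitrary derivation of $K$: for instance with $R=\CM[t^2,t^3]$, $R^\nor=\CM[t]$ and $D=t^{-1}\,d/dt$, one has $D(R)\subset R^\nor$ but $D(t)=t^{-1}\notin R^\nor$. (In that example $D$ does not arise from a Poisson bracket --- a one-dimensional domain only carries the zero bracket --- but it shows that the statement you are invoking is false at the level of generality at which you invoke it.) Thus the second appeal to Seidenberg does not close the argument; what is really needed is an argument that exploits the full Poisson structure (both arguments of the bracket, antisymmetry, Jacobi), not merely the fact that each Hamiltonian is a derivation. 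This is exactly where Kaledin's proof does the real work. A secondary issue: your sketch of Seidenberg's theorem itself (``differentiate the integral relation and solve for $D(x)$, then a standard argument shows $D(x)$ is integral'') is an oversimplification --- isolating $D(x)$ as a quotient does not by itself show it is integral over $A$, and the actual proof goes through the Mori--Nagata description of $A^\nor$ as a locally finite intersection of DVRs and a separate valuation-theoretic lemma. So the ``only non-formal ingredient'' is in fact two non-formal ingredients, and the second one (handling the bracket of two elements that are both outside $R$, in particular at the height-one primes of $R^\nor$ lying over the non-normal locus of $R$) is missing.
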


\bigskip

\subsection{Action of a finite group}\label{sub:poisson-fixed}
We assume in this subsection that we are given a finite group $G$ acting 
on the $\CM$-algebra $R$ in such a way that the Poisson bracket is $G$-equivariant 
(i.e. $\{g(r),g(r')\}=g(\{r,r'\})$ for all $g \in G$ and $r$, $r' \in R$). 
Let $I$ denote the ideal of $R$ generated by the family 
$(g(r)-r)_{\substack{g \in G \\ r \in R}}$. Then $R/I$ is the biggest 
quotient algebra of $R$ on which $G$ acts trivially.

Since $G$ is finite and $\CM$ has characteristic $0$, the natural map
$$R^G \longto (R/I)^G=R/I$$
is surjective and its kernel is $I^G$. Moreover, $R^G$ is a Poisson 
subalgebra of $R$ (because the Poisson bracket is $G$-equivariant). 
Note that $I$ is not in general a Poisson ideal of $R$, but 
it is easily checked that 
\equat\label{eq:ideal-fixe}
\text{\it $I^G$ is a Poisson ideal of $R^G$.}
\endequat
Therefore, $R/I=R^G/I^G$ can be naturally endowed with a Poisson bracket.
And, by Proposition~\ref{prop:poisson-basique}(b), $R/\sqrt{I}=R^G/\sqrt{I^G}$ 
also inherits a Poisson bracket.

\bigskip

\begin{rema}\label{rem:poisson variety}
If $R=\CM[\XC]$ is the coordinate ring of an affine variety $\XC$, then $R/I$ 
is the coordinate ring of the $G$-fixed points scheme of $\XC$ (which will be denoted by 
$\XC^{(G)}$), while $R/\sqrt{I}$ is the coordinate ring of its reduced subscheme 
(which will be denoted by $\XC^G$). The above construction shows that 
the closed subvariety $\XC^G$ of $\XC$ inherits a Poisson structure from the one on $\XC$, 
even though it is not in general a Poisson subvariety of $\XC$ 
(i.e. the natural map $\XC^G \longinjto \XC$ is not Poisson). 
However, $\XC/G$ is also a Poisson variety and the natural map $\XC^G \longinjto \XC/G$ 
is Poisson, i.e. $\XC^G$ is a closed Poisson subvariety of $\XC/G$.

If moreover $\XC$ is smooth, then $\XC^{(G)}=\XC^G$ is also smooth, and if the Poisson 
structure on $\XC$ makes it into a symplectic variety, then $\XC^G$ 
is also symplectic for the induced Poisson structure.\finl
\end{rema}

\bigskip

\begin{exemple}\label{ex:sous-espace}
Let $E$ be a $\CM$-vector space endowed with a symplectic form $\o$ 
and assume here that $R=\CM[E]$ and that $G \subset \Sb\pb(E,\o)$. 
Then the restriction of $\o$ to $E^G$ is non-degenerate, so this 
endows $E^G$ with a structure of Poisson (even more, symplectic) 
variety. On the other hand, via the above Remark~\ref{rem:poisson variety}, 
the variety $E^G$ also inherits from $E$ a structure of Poisson variety. 
It is easily checked that both structures coincide.\finl
\end{exemple}

\bigskip

\subsection{Symplectic leaves}\label{sub:symp-general}
Assume in this subsection that $R=\CM[\XC]$ is the coordinate ring of an affine variety $\XC$. 
Then Brown-Gordon~\cite{brown gordon} have defined a stratification of $\XC$ by  
{\it symplectic leaves}, which are in general not algebraic subvarieties of $\XC$. 
We denote by $\symp(\XC)$ the set of symplectic leaves of $\XC$. 

When $\XC$ has finitely many symplectic leaves, then the symplectic 
leaves are algebraic~\cite[Prop.~3.7]{brown gordon} and the stratification of $\XC$ into symplectic 
leaves is given as follows. Let $(\SC_j)_{j \ge 0}$ be the sequence of 
closed subvarieties of $\XC$ defined by
$$
\begin{cases}
\SC_0=\ZC_k^\t,\\
\text{If $j \ge 0$, then $\SC_{j+1}$ is the reduced singular locus of $\SC_j$.}
\end{cases}
$$
Then the symplectic leaves of $\XC$ are the irreducible components of the locally 
closed subvarieties $(\SC_j \setminus \SC_{j+1})_{j \ge 0}$. 
Let $\pspec(\CM[\XC])$ denote the set of prime ideals that are Poisson. 
If $\SC$ is a symplectic leaf of $\XC$, we denote by $\pG_\SC$ 
the defining ideal of $\overline{\SC}$ in $\CM[\XC]$: it belongs to $\pspec(\CM[\XC])$. 
If $\XC$ has finitely many symplectic leaves, then the map 
\equat\label{eq:poisson prime general}
\fonctio{\symp(\XC)}{\pspec(\CM[\XC])}{\SC}{\pG_\SC}
\endequat
is bijective~\cite[Lem.~3.4]{brown gordon}. The inverse is given as follows: 
if $\pG \in \pspec(\CM[\XC])$ corresponds to $\SC$ through this bijection, 
then $\SC$ is the smooth locus of the closed irreducible subvariety of $\XC$ 
defined by $\pG$. 
% In this case, if $x \in \XC$ corresponds to the maximal 
% ideal $\mG$ of $\CM[\XC]$, and if $\SC$ denotes the symplectic leaf contining $x$, then 
% $\pG_\SC=\CC(\mG)$.

\bigskip

\begin{lem}\label{lem:ferme-leaves}
Assume that $\XC$ has finitely many symplectic leaves and that $\YC$ is a locally 
closed Poisson subvariety of $\XC$. Then $\YC$ has finitely many symplectic leaves.
\end{lem}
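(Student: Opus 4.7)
My plan is to reduce to a closed Poisson subvariety by taking the Zariski closure, and then to handle the open inclusion separately.

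First I would verify that $\Yov := \overline{\YC}$ is a closed Poisson subvariety of $\XC$. By hypothesis $\YC$ is open in some closed Poisson subvariety $Z$ of $\XC$, whose defining ideal $I(Z) \subset \CM[\XC]$ is Poisson. By Proposition~\ref{prop:poisson-basique}(a), each minimal prime over $I(Z)$ is Poisson, and $I(\Yov)$ is the intersection of exactly those minimal primes of $I(Z)$ corresponding to irreducible components of $Z$ that meet $\YC$ (i.e.\ the components of $\Yov$). An intersection of Poisson ideals is Poisson, so $I(\Yov)$ is Poisson and $\CM[\Yov] = \CM[\XC]/I(\Yov)$ inherits a Poisson structure.

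Next I would show that $\symp(\Yov)$ is finite by identifying it with a subset of $\symp(\XC)$. Because $I(\Yov)$ is a Poisson ideal, the derivation $\{f,\cdot\}$ for $f \in \CM[\XC]$ preserves $I(\Yov)$, so the Hamiltonian flow of $f$ on $\XC$ sends $\Yov$ to itself; conversely, every Hamiltonian vector field on $\Yov$ is the restriction of the Hamiltonian vector field of some lift $\tilde f \in \CM[\XC]$. Hence the piecewise-Hamiltonian equivalence relation on $\Yov$ is exactly the restriction to $\Yov$ of that on $\XC$, so the symplectic leaves of $\Yov$ are precisely those leaves of $\XC$ which are contained in $\Yov$. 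Finiteness of $\symp(\Yov)$ follows from finiteness of $\symp(\XC)$.

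Finally, since $\Yov$ now has finitely many symplectic leaves, the characterization recalled in the excerpt applies to $\Yov$: each $L \in \symp(\Yov)$ is smooth, irreducible and locally closed. For such $L$, the intersection $L \cap \YC$ is open in $L$, hence again smooth and irreducible, and thus classically connected. On a smooth connected complex symplectic variety the piecewise-Hamiltonian equivalence classes coincide with the classical connected components, so each non-empty $L \cap \YC$ is a single symplectic leaf of $\YC$. Therefore $|\symp(\YC)| \le |\symp(\Yov)| < \infty$. The most delicate step is this very last one; it rests on the standard fact (a complex-analytic Darboux-type statement) that Hamiltonian flow is transitive on a smooth connected complex symplectic variety, and if one wishes to avoid it one can argue instead via the Poisson-prime bijection~(\ref{eq:poisson prime general}) combined with the converse implication ``finitely many Poisson primes $\Rightarrow$ finitely many symplectic leaves'' of Brown--Gordon~\cite{brown gordon}, applied to the localization $\CM[\YC]$ of $\CM[\Yov]$.
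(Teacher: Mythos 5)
Your proof is correct and follows the same overall strategy as the paper: pass to the closure $\Yov$, observe that it is Poisson, and show that a closed Poisson subvariety of $\XC$ is a finite union of symplectic leaves of $\XC$. The difference is in how the two key steps are carried out. The paper argues algebraically: by Proposition~\ref{prop:poisson-basique}(a) each irreducible component of $\Yov$ is Poisson, so by the bijection~\eqref{eq:poisson prime general} it is the closure of a leaf of $\XC$ and hence a union of leaves. You instead argue analytically, identifying the piecewise-Hamiltonian equivalence relation on $\Yov$ with the restriction of the one on $\XC$ (using that $I(\Yov)$ is a Poisson ideal and that every regular function on the closed affine $\Yov$ lifts to $\XC$) and concluding that the leaves of $\Yov$ are exactly the leaves of $\XC$ that it contains. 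These are two phrasings of the same underlying fact, and your version actually matches the Brown--Gordon definition more directly.

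Where your write-up genuinely adds something is the final step. The paper compresses the passage from the closed case back to the locally closed case into the opening phrase ``allows to assume $\YC$ is closed'' and never returns to it. You make this explicit: for each leaf $L$ of $\Yov$, the set $L\cap\YC$ is open in $L$, and since $L$ is irreducible (it is the smooth locus of the irreducible variety cut out by the corresponding Poisson prime), $L\cap\YC$ is again irreducible, hence connected, hence a single leaf of $\YC$ by transitivity of Hamiltonian flow on a smooth connected symplectic variety. This is the delicate point you flag at the end, and you correctly note the purely algebraic alternative via~\eqref{eq:poisson prime general}. One small remark: the connectedness of $L\cap\YC$ is not strictly needed for the lemma --- $L\cap\YC$ is a quasi-affine, hence noetherian, smooth symplectic variety and therefore has finitely many connected components regardless --- so the argument would go through even without invoking irreducibility of $L$; but using it gives the sharper conclusion that the map $L\mapsto L\cap\YC$ is a bijection from the leaves of $\Yov$ meeting $\YC$ onto the leaves of $\YC$.
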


\bigskip

\begin{proof}
Taking the closure of $\YC$, which is also Poisson, allows to assume that $\YC$ 
is closed. Let $(\SC_l)_{l \in L}$ be the family of symplectic leaves of $\XC$ 
(for some finite indexing set $L$). Let $\IC$ be an irreducible component of $\YC$. 
Then $\IC$ is also Poisson by Proposition~\ref{prop:poisson-basique}(a) so 
it is the closure of a symplectic leaf thanks to the bijection~\eqref{eq:poisson prime general}. 
In particular, there exists a subset $I$ of $L$ such that $\IC$ is the union of 
the $\SC_i$, for $i \in I$. This proves that $\YC$ is a union of symplectic 
leaves of $\XC$, each of which being also a symplectic leaf of $\YC$.
\end{proof}

% In particular, 
% if $G$ is a connected algebraic group acting on $\XC$ and preserving the 
% Poisson bracket, then
% \equat\label{eq:g-stable}
% \text{\it Every symplectic leaf is $G$-stable.}
% \endequat

\bigskip

Now, let $G$ be a finite group acting on $\XC$ and preserving the Poisson 
bracket. Then $\XC/G$ is an affine Poisson variety (because $\CM[\XC/G]=\CM[\XC]^G$ 
is a Poisson subalgebra of $\CM[\XC]$, see Remark~\ref{rem:poisson variety}). 
If $H$ is a subgroup of $G$, we denote by $\XC(H)$ the set of elements 
$x \in \XC$ whose stabilizer is exactly $H$. Then $\XC(H)$ is a locally closed subvariety 
of $\XC$ (it is open in $\XC^H$). The subgroup $H$ is called {\it parabolic} if $\XC(H) \neq \vide$. 
Let $\para(G)$ denote the set of parabolic subgroups of $G$. 

If $\HG$ is a conjugacy class of parabolic subgroups of $G$, we denote by $(\XC/G)(\HG)$ 
the image of $\XC(H)$ in $\XC/G$ for some (or any) $H \in \HG$. Then the group 
$\Nrmov_G(H)/H$ acts freely on $\XC(H)$ and the natural map 
$\XC(H) \to (\XC/G)(\HG)$ induces an isomorphism 
\equat\label{eq:iso-normalisateur}
\XC(H)/G(H) \longiso (\XC/G)(\HG).
\endequat
Indeed, if $g \in G$ and $x$, $x' \in \XC(H)$ are such that $g\cdot x = x'$, 
then $H=G_{x'}=\lexp{g}{G_x}=\lexp{g}{H}$ and so $g \in \Nrm_G(H)$. 
The next results generalizes slightly~\cite[Prop.~7.4]{brown gordon}:

\bigskip

\begin{prop}\label{prop:leaves-lisse}
Assume that $\XC$ is smooth and symplectic. Then the symplectic leaves of 
$\XC/G$ are the irreducible components of the locally closed subvarieties 
$(\XC/G)(\HG)$ where $\HG$ runs over $\para(G)/G$. 

In particular, if all the subvarieties $(\XC/G)(\HG)$ are irreducible, then 
$$\XC/G = \dot{\bigcup_{\HG \in \para(G)/G}} (\XC/G)(\HG)$$
is the stratification of $\XC/G$ into symplectic leaves.
\end{prop}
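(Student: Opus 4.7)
The plan is to show that each stratum $(\XC/G)(\HG)$ is a smooth locally closed subvariety of $\XC/G$ equipped with a symplectic form that coincides with the Poisson structure it inherits from $\XC/G$; the irreducible components of these strata will then automatically be symplectic leaves, and since the $(\XC/G)(\HG)$ partition $\XC/G$, these components will exhaust the leaves of $\XC/G$.

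First, I would show that for each parabolic subgroup $H \in \para(G)$ the variety $\XC^H$ is smooth and symplectic. Smoothness of the fixed points of a finite group action on a smooth variety is classical. For the symplectic structure, at any $x \in \XC^H$ the group $H$ acts on the symplectic tangent space $(T_x \XC, \omega_x)$ by symplectomorphisms (because $G$ preserves the Poisson bracket and $\omega$ is non-degenerate), hence $T_x \XC^H = (T_x \XC)^H$ is a symplectic subspace by Example~\ref{ex:sous-espace}, so $\omega$ restricts to a non-degenerate form on $\XC^H$. Then $\XC(H)$, being Zariski-open in $\XC^H$, inherits a symplectic structure, and $\Nrmov_G(H)$ acts freely on $\XC(H)$ by symplectomorphisms. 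Via~\eqref{eq:iso-normalisateur}, the quotient $(\XC/G)(\HG)$ thereby becomes a smooth symplectic variety.

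The key step is to verify that this symplectic structure on $(\XC/G)(\HG)$ agrees with the Poisson structure induced on it as a locally closed subvariety of $\XC/G$. This is a local question at the image $\bar x \in \XC/G$ of a point $x \in \XC(H)$. Using a Luna-type étale slice, a $G$-stable neighborhood of the orbit $G \cdot x$ in $\XC$ is $G$-equivariantly equivalent to a neighborhood of the zero section in $G \times_H T_x\XC$; in particular, $\XC/G$ is locally modeled near $\bar x$ by the linear quotient $(T_x \XC)/H$, with Poisson structure matching the one coming from the symplectic form $\omega_x$. In this linear model the compatibility of the two structures reduces exactly to the content of Example~\ref{ex:sous-espace} combined with Remark~\ref{rem:poisson variety}.

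Given that, each irreducible component $\IC$ of $(\XC/G)(\HG)$ is a smooth connected locally closed Poisson subvariety of $\XC/G$ on which the Poisson bracket is non-degenerate, hence $\IC$ is contained in a single symplectic leaf $\SC$, the symplectic leaves being the maximal connected such pieces. Conversely, Hamiltonian vector fields on $\XC/G$ lift to $G$-invariant Hamiltonian vector fields on $\XC$, whose flows preserve stabilizers and therefore preserve the orbit-type stratification downstairs; so $\SC$ cannot escape $\IC$, and $\SC = \IC$. As $\HG$ ranges over $\para(G)/G$, the strata $(\XC/G)(\HG)$ partition $\XC/G$, and their irreducible components exhaust the symplectic leaves; the final assertion is then immediate. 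I expect the main obstacle to be the local étale-slice analysis of the third paragraph, which, though standard, is what reduces the nonlinear geometric claim to the purely linear Example~\ref{ex:sous-espace}.
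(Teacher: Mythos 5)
Your proof is correct in outline and reaches the same conclusion, but the two arguments differ in their machinery at the two key steps.

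For the compatibility of symplectic and Poisson structures on $(\XC/G)(\HG)$, you reduce to the linear case of Example~\ref{ex:sous-espace} via a Luna \'etale slice. The paper avoids slices entirely: it simply observes that the natural morphism $\XC^H \to \XC/G$ is Poisson (it factors as $\XC^H \hookrightarrow \XC/H \to \XC/G$, the first arrow being Poisson by Remark~\ref{rem:poisson variety} and the second being the inclusion $\CM[\XC]^G \subset \CM[\XC]^H$ of Poisson subalgebras of $\CM[\XC]$), which suffices to conclude that each irreducible component of $(\XC/G)(\HG)$ lies in a single leaf of $\XC/G$. Your slice argument works, but it imports heavier geometry than the algebraic setup of Section~2 requires.

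For the converse step — showing a component $\IC$ of $(\XC/G)(\HG)$ is an entire leaf and not a proper piece of one — you argue that lifts of Hamiltonian vector fields to $\XC$ are $G$-invariant, hence have $G$-equivariant flows preserving stabilizers, so the leaf cannot escape $\IC$. This is essentially the original differential-geometric argument of Brown--Gordon, and it requires passing through analytic flows. The paper instead stays purely algebraic: it invokes the bijection~\eqref{eq:poisson prime general}, notes that $\ICov$ is an irreducible closed Poisson subvariety whose smooth locus is therefore a symplectic leaf of $\XC/G$, and then checks that $\IC$ \emph{is} that smooth locus by observing that $\ICov\setminus\IC$ is a closed Poisson subvariety of $\ICov$ (it equals $\ICov$ intersected with the union of the $\overline{(\XC/G)(\HG')}$ for $\HG'$ strictly containing an element of $\HG$). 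Your flow argument is geometrically more transparent; the paper's version has the advantage of remaining within the framework of Poisson prime ideals, avoiding any appeal to flows, and simultaneously delivering the finiteness of the number of leaves used downstream (Corollary~\ref{coro:finitude}).
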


\bigskip

\begin{proof}
Let $\HG \in \para(G)$ and let $H \in \HG$. 
Since $\XC$ is smooth and symplectic, the subvariety $\XC^H$ is also smooth and symplectic. 
So its open subset $\XC(H)$ is also smooth and symplectic as well as $(\XC/G)(\HG)$ 
thanks to the isomorphism~\eqref{eq:iso-normalisateur}. 
And the morphism $\XC^H \to \XC/G$ is Poisson: this proves that any irreducible 
component of $(\XC/G)(\HG)$ is contained in a unique symplectic leaf. In particular, 
$\XC/G$ has finitely many symplectic leaves.

It remains to show that any irreducible component $\IC$ of $(\XC/G)(\HG)$ is 
a symplectic leaf. But $\ICov$ is a closed Poisson subvariety of 
$\XC/G$, so its smooth locus is a symplectic leaf of $\XC/G$ by the 
bijection~\eqref{eq:poisson prime general}. Since $\IC$ is smooth, 
it remains to show that $\IC$ is the smooth locus of $\ICov$. 
But, by the bijection~\eqref{eq:poisson prime general} applied to $\ICov$, 
this follows from the fact that $\ICov \setminus \IC$ is a closed Poisson 
subvariety of $\ICov$ (indeed, $\ICov \setminus \IC$ is the intersection 
of $\ICov$ with the union of the $\overline{(\XC/G)(\HG')}$, where 
$\HG'$ runs over the set of conjugacy classes of parabolic subgroups 
of $G$ containing strictly at least one element of $\HG$).
\end{proof}

\bigskip

\begin{coro}\label{coro:finitude}
Assume that $\XC$ has finitely many symplectic leaves. Then 
$\XC/G$ and $\XC^G$ have finitely many symplectic leaves.
\end{coro}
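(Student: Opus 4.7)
The plan is to settle $\XC^G$ by reducing it to $\XC/G$, and then attack $\XC/G$ directly via the leaf stratification of $\XC$ combined with Proposition~\ref{prop:leaves-lisse}. For $\XC^G$, Remark~\ref{rem:poisson variety} says that the closed embedding $\XC^G\hookrightarrow \XC/G$ is Poisson, so Lemma~\ref{lem:ferme-leaves} immediately reduces the claim for $\XC^G$ to the claim for $\XC/G$. The substance of the corollary is therefore the statement about $\XC/G$.

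For $\XC/G$, I would exploit the fact that $\XC$ has only finitely many symplectic leaves $\SC_1,\dots,\SC_n$, together with the fact that each such leaf is smooth and symplectic. Since $G$ preserves the Poisson bracket it permutes the $\SC_i$; partition them into a finite collection of $G$-orbits and pick a representative $\SC$ from each. Let $G_\SC$ be the setwise $G$-stabilizer of $\SC$; then $G_\SC$ acts on $\SC$ preserving its symplectic form, so Proposition~\ref{prop:leaves-lisse} applied to $(\SC,G_\SC)$ tells us that $\SC/G_\SC$ has finitely many symplectic leaves. Moreover, $\pi\colon \XC\to \XC/G$ restricted to $\SC$ factors through $\SC/G_\SC$ and identifies $\pi(\SC)$ with $\SC/G_\SC$. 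Because the $\SC_i$ partition $\XC$, the pieces $\pi(\SC)$ (one per $G$-orbit of leaves) partition $\XC/G$ into finitely many locally closed subsets, each of which is preserved by every Hamiltonian flow on $\XC/G$: indeed such a flow lifts to a $G$-invariant Hamiltonian flow on $\XC$, and that flow preserves every leaf $\SC_i$, hence every $G$-orbit of leaves, hence each $\pi(\SC)$. Therefore every symplectic leaf of $\XC/G$ lies inside some $\pi(\SC)$, and is in fact a symplectic leaf of the Poisson variety $\pi(\SC)\cong \SC/G_\SC$. Summing over the finitely many $G$-orbits gives the desired finiteness.

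The main obstacle is the compatibility claim in the last sentence: that a symplectic leaf of $\XC/G$ contained in $\pi(\SC)$ really is a symplectic leaf of $\pi(\SC)$ in its own right. This amounts to checking that the Poisson structure $\pi(\SC)$ inherits as a (Hamiltonian-flow invariant) locally closed subset of $\XC/G$ agrees with the Poisson-quotient structure it carries as $\SC/G_\SC$, where $\SC$ is smooth symplectic. This is a local verification: $R^G$-functions on $\XC/G$ restrict to $G_\SC$-invariant functions on $\SC$, and for such functions the two brackets coincide, in the same spirit as Example~\ref{ex:sous-espace}. Once this matching is granted, the finiteness of leaves on $\XC/G$ follows, and then the finiteness on $\XC^G$ is obtained from the first paragraph.
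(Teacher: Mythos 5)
Your proposal is correct and follows essentially the same route as the paper: reduce $\XC^G$ to $\XC/G$ via Lemma~\ref{lem:ferme-leaves} and Remark~\ref{rem:poisson variety}, and handle $\XC/G$ by showing that the image of each leaf $\SC$ of $\XC$ in $\XC/G$ is a locally closed Poisson subvariety isomorphic to $\SC/G_\SC^\setw$, then invoking Proposition~\ref{prop:leaves-lisse}. The paper states the compatibility you flag at the end (that the induced Poisson structure on $\pi(\SC)$ coincides with the quotient one on $\SC/G_\SC^\setw$) without elaboration, using the language of ``locally closed Poisson subvariety'' rather than your explicit Hamiltonian-flow argument, but the substance is the same.
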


\bigskip

\begin{proof}
Let $\SC$ denote a symplectic leaf of $\XC$ and let $H=G_\SC^\setw$. 
As symplectic leaves form a partition of $\XC$, and since 
$g(\SC)$ is a symplectic leaf of $\XC$ for any $g \in G$, we get that 
$$g(\SC) \cap \SC = \vide$$
for all $g \not\in H$. So, the image $\SC$ in $\XC/G$ is isomorphic 
to $\SC/H$ and is a locally closed Poisson subvariety of $\XC/G$. 
But, by Proposition~\ref{prop:leaves-lisse}, $\SC/H$ has finitely 
many symplectic leaves. 

As $\XC$ has finitely many symplectic leaves, this shows that $\XC/G$ 
also has finitely many symplectic leaves.
Now, $\XC^G$ is a closed Poisson subvariety of $\XC/G$, so 
it also admits finitely many symplectic leaves by Lemma~\ref{lem:ferme-leaves}. 
\end{proof}

\bigskip

As a consequence of the above proof, we get:

\bigskip

\begin{coro}\label{coro:finitude-bis}
Assume that $\XC$ has finitely many symplectic leaves and that $G$ acts freely 
on $\XC$. Then the map 
$\symp(\XC)/G \longto \symp(\XC/G)$ sending the $G$-orbit of 
a symplectic leaf of $\XC$ 
to its image in $\XC/G$ is well-defined and bijective.
\end{coro}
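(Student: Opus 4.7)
My plan is to exploit the fact that, since $G$ acts freely on $\XC$, the quotient morphism $\pi : \XC \to \XC/G$ is an \'etale $G$-torsor, and it is automatically Poisson (as $\CM[\XC/G] = \CM[\XC]^G$ is a Poisson subalgebra of $\CM[\XC]$). The strategy is to show that $\pi$ preserves the iterated singular-locus stratification recalled in~\S\ref{sub:symp-general} whose irreducible components are the symplectic leaves---on $\XC$ by hypothesis, and on $\XC/G$ thanks to Corollary~\ref{coro:finitude}---and then to conclude via a standard count of irreducible components modulo a free finite group action.

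The first step is to prove, by induction on $j$, the identity $\pi^{-1}(\SC_j^{\XC/G}) = \SC_j^\XC$, where $(\SC_j^\XC)_{j \ge 0}$ and $(\SC_j^{\XC/G})_{j \ge 0}$ denote the two sequences of closed subvarieties defined in~\S\ref{sub:symp-general} (with $\SC_0$ the ambient variety and $\SC_{j+1}$ the reduced singular locus of $\SC_j$). The base case is trivial; the induction step relies on the standard facts that smoothness is preserved and reflected under \'etale base change, and that reducedness is preserved under \'etale pullback: once one knows (by the inductive hypothesis) that $\pi$ restricts to an \'etale morphism $\SC_j^\XC \to \SC_j^{\XC/G}$, the singular locus of one pulls back to the singular locus of the other, and taking reduced structures is compatible with this. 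It follows that, on both sides, the symplectic leaves are precisely the irreducible components of the locally closed strata $\SC_j \setminus \SC_{j+1}$.

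It then suffices to observe that $\pi$ restricts, for every $j \ge 0$, to an \'etale $G$-torsor
\[
\SC_j^\XC \setminus \SC_{j+1}^\XC \longsurto \SC_j^{\XC/G} \setminus \SC_{j+1}^{\XC/G},
\]
and to apply the standard fact about free quotients by a finite group: for such an \'etale $G$-torsor, the irreducible components of the base are in canonical bijection with the $G$-orbits of irreducible components of the total space---every component upstairs maps onto an irreducible component downstairs, every component downstairs arises in this way, and two components upstairs share their image iff they are $G$-translates of one another. Assembling these bijections as $j$ varies yields simultaneously the well-definedness and the bijectivity of the assignment $[\SC] \mapsto \pi(\SC)$.

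The only mildly delicate point I foresee is the compatibility $\pi^{-1}(\SC_j^{\XC/G}) = \SC_j^\XC$, but it is an immediate consequence of the \'etaleness of $\pi$, so I do not anticipate any genuine obstacle.
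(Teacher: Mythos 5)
Your proof is correct, but it takes a genuinely different route from the paper's. The paper deduces this corollary as an immediate byproduct of the proof of Corollary~\ref{coro:finitude}: for each leaf $\SC$ of $\XC$, the setwise stabilizer $H=G_\SC^\setw$ acts freely on $\SC$ (since $G$ acts freely on $\XC$), so Proposition~\ref{prop:leaves-lisse} applied to the smooth symplectic variety $\SC$ shows that $\SC/H$ — which is the image of $\SC$ in $\XC/G$ — is a \emph{single} symplectic leaf of $\XC/G$; the well-definedness and bijectivity are then read off from the fact that symplectic leaves partition both $\XC$ and $\XC/G$. Your argument instead observes that a free action of a finite group in characteristic zero makes the quotient map $\pi:\XC\to\XC/G$ finite \'etale, and exploits that the iterated singular-locus stratification from~\S\ref{sub:symp-general} is preserved and reflected by \'etale morphisms, finishing with the standard count of irreducible components in an \'etale $G$-torsor. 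Both approaches need Corollary~\ref{coro:finitude} to know that $\XC/G$ has finitely many leaves (you cite it; the paper's argument establishes it along the way), so your proof doesn't really save on prerequisites, and it additionally imports the \'etaleness of free finite quotients and the local-ring comparison of singular loci. What it buys is a slicker packaging of the bijectivity — you get well-definedness, injectivity, and surjectivity simultaneously from the torsor structure on each stratum, rather than the paper's separate appeal to the partition property — and it avoids re-invoking Proposition~\ref{prop:leaves-lisse} at the level of individual leaves.
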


\bigskip

% \begin{lem}\label{lem:finitude}
% Assume that $\XC$ has finitely many symplectic leaves. Then 
% $\XC^G$ has finitely many symplectic leaves.
% \end{lem}
% 
% \bigskip
% 
% \begin{proof}
% We have
% $$\XC^G=\dot{\bigcup_{\SC \in \symp(\XC)^G}} \SC^G.$$
% But, if $\SC \in \symp(\XC)^G$, then $\SC$ is smooth and symplectic, 
% and so $\SC^G$ is smooth and symplectic. Therefore, all the irreducible 
% components of $\SC^G$ are contained in a unique symplectic leaf, 
% and the finiteness of $\symp(\XC^G)$ follows.
% \end{proof}
% 
% 
% \bigskip

\section{Recollection of Lehrer-Springer theory}\label{sub:springer}

\medskip

\subsection{Reflection groups}
Note that~\cite{springer} 
\equat\label{eq:delta-tau}
\d(\t)=\dim (V/W)^\t.
\endequat
Therefore, since $(V/W)^\t$ is irreducible 
(it is isomorphic to an affine space~\cite{springer}), we get that
\equat\label{eq:w-full}
\text{\it the natural map $V^\t \to (V/W)^\t$ is onto.}
\endequat
For simplifying notation, we set $W_\t=W_{V^\t}^\setw/W_{V^\t}^\ptw$. Note
that $W^\t \subset W_{V^\t}^\setw$. Moreover, $W_\t$ acts faithfully on $V^\t$, so 
\equat\label{eq:tau trivial}
\text{\it $\t$ acts trivially on $W_\t$.}
\endequat
Lehrer-Springer theory~\cite[Theo.~2.5~and~Cor.~2.7]{lehrer springer} 
gives the following result:
\def\ii{i\hskip-0.3mm i}

\bigskip

\begin{theo}[Springer, Lehrer-Springer]\label{theo:lsp}
Recall that $\t$ is $W$-full. Then:
\begin{itemize}
\itemth{a} The group $W_\t$ is a reflection group 
for its action on $V^{\t}$.
 
\itemth{b} The natural map 
$$i_\t : V^\t/W_\t \longto (V/W)^\t$$
is an isomorphism of varieties.

\itemth{c} The reflecting hyperplanes of $W_\t$ are exactly the intersections 
with $V^\t$ of the reflecting hyperplanes of $W$ which do not contain $V^\t$. 
\end{itemize}
\end{theo}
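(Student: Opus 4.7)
The plan is to follow the approach of Springer \cite{springer}, extended by Lehrer--Springer \cite{lehrer springer}. I will establish (c) first, deduce (a) from (c) via Chevalley--Shephard--Todd together with a degree comparison, and finally obtain (b) from (a) combined with the $W$-fullness hypothesis.

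For (c): let $H$ be a reflecting hyperplane of $W$ with $V^\t \not\subseteq H$, so that $H \cap V^\t$ is a hyperplane of $V^\t$. Since $\t$ normalizes $W$, it permutes the reflecting hyperplanes of $W$; let $\OC=\{H,\t(H),\dots,\t^{m-1}(H)\}$ denote the $\la\t\ra$-orbit of $H$. A direct check shows $V^\t\cap\t^j(H)=V^\t\cap H$ for all $j$ (the intersection is $\t$-stable). Fix a generator $s$ of the cyclic group $W_H^\ptw$ and set
\[
\widetilde{s} \;=\; s\cdot\t(s)\cdot\t^2(s)\cdots\t^{m-1}(s)\ \in\ W .
\]
By construction $\widetilde{s}$ is $\t$-invariant, hence in $W^\t\subseteq W_{V^\t}^\setw$, and it fixes $H\cap V^\t$ pointwise. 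A short computation evaluating $\widetilde{s}$ on a vector of $V^\t$ transverse to $H\cap V^\t$ shows that its image in $W_\t$ is nontrivial, hence a (pseudo)reflection with hyperplane $H\cap V^\t$. Conversely, any reflection of $W_\t$ lifts to an element of $W_{V^\t}^\setw\subseteq W$ whose fixed locus contains a hyperplane of $V^\t$, and one checks that this hyperplane must be of the form $V^\t\cap H$ for some reflecting hyperplane $H$ of $W$ not containing $V^\t$.

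For (a): let $W_\t^{\refl}\subseteq W_\t$ be the subgroup generated by the reflections produced in (c). We must show $W_\t^{\refl}=W_\t$. By Chevalley--Shephard--Todd, $V^\t/W_\t^{\refl}$ is isomorphic to an affine space of dimension $\dim V^\t$. The $W$-fullness hypothesis gives $\d(\t)=\dim V^\t$, and the already noted surjectivity $V^\t\twoheadrightarrow (V/W)^\t$ shows the target has the same dimension. A standard computation, using Solomon's theorem to identify the image of $\CM[V]^W\hookrightarrow\CM[V^\t]$ (restricted to $\t$-invariant basic invariants) with a polynomial ring in $\dim V^\t$ variables, shows that this image coincides with $\CM[V^\t]^{W_\t^{\refl}}$; by the Galois-theoretic interpretation of the two quotient rings this forces $W_\t^{\refl}=W_\t$.

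For (b): the well-definedness of $i_\t$ reduces to the claim that two points of $V^\t$ in the same $W$-orbit lie in the same $W_\t$-orbit, which follows by analyzing the action of $\t$ on $W\cdot v\cap V^\t$ and showing it is transitive under $W_{V^\t}^\setw$. Surjectivity of $i_\t$ is the noted surjectivity of $V^\t\to (V/W)^\t$. By (a) and Chevalley--Shephard--Todd, $V^\t/W_\t$ is smooth, and $(V/W)^\t$ is also smooth and irreducible of the same dimension $\d(\t)$. Since by the proof of (a) the coordinate rings coincide, $i_\t$ is an isomorphism.

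The main obstacle is the invariant-theoretic identification in (a): one must show that, among a system of basic invariants $(f_1,\dots,f_n)$ of $W$ with $\t(f_i)=\z_i f_i$, exactly $\dim V^\t$ of them have $\z_i=1$, and that their restrictions to $V^\t$ are algebraically independent generators of $\CM[V^\t]^{W_\t^{\refl}}$. This is the content of Solomon's theorem combined with Springer's regularity/fullness analysis, and it is what simultaneously furnishes the degree count and the reflection-group structure on $W_\t$.
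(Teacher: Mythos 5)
The paper does not prove this statement; it is quoted as background from Springer and Lehrer--Springer, with an explicit citation to \cite[Theo.~2.5 and Cor.~2.7]{lehrer springer} (and to \cite{springer}) and no argument supplied in the text. So there is no internal proof to compare your sketch against.

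That said, your sketch has a concrete gap in part~(c), on which you then base (a) and (b). You form $\widetilde{s}=s\cdot\t(s)\cdot\t^2(s)\cdots\t^{m-1}(s)$ and assert that ``by construction $\widetilde{s}$ is $\t$-invariant, hence in $W^\t\subseteq W_{V^\t}^\setw$.'' Neither claim holds as stated. Applying $\t$ gives $\t(\widetilde{s})=\t(s)\cdots\t^{m-1}(s)\cdot\t^m(s)$; the factors $s,\t(s),\dots,\t^{m-1}(s)$ are reflections about the distinct hyperplanes $H,\t(H),\dots,\t^{m-1}(H)$ and so do not commute in general, and $\t^m$ normalizes $W_H^\ptw$ but need not fix the chosen generator $s$ (only a power of it). Thus $\t(\widetilde{s})=\widetilde{s}$ is not automatic. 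Moreover, even setting $\t$-invariance aside, for the class of $\widetilde{s}$ in $W_\t=W_{V^\t}^\setw/W_{V^\t}^\ptw$ to make sense you need $\widetilde{s}\in W_{V^\t}^\setw$, i.e.\ that $\widetilde{s}$ stabilize the subspace $V^\t$; the fact that $\widetilde{s}$ fixes the hyperplane $H\cap V^\t$ of $V^\t$ pointwise does not by itself force $\widetilde{s}(V^\t)=V^\t$. This is precisely the delicate point in Lehrer--Springer, which they handle via eigenspace decompositions and an induction grounded in Springer's theory of regular elements rather than by a direct product-of-conjugates construction. Until the construction of the reflection in $W_\t$ is repaired, the Solomon/Chevalley degree count you invoke for (a) and the resulting identification of coordinate rings for (b) are not supported.
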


\bigskip

Similarly, the natural map $i_\t^\ve : V^{*\t}/W_\t \to (V^*/W)^\t$ is an isomorphism of 
varieties.

\bigskip

\begin{exemple}\label{ex:regular-w}
If $\t$ is $W$-regular (as defined in Example~\ref{ex:regular}), 
then $W_\t=W^\t$ by~\cite{springer}.\finl
\end{exemple}

\bigskip

\def\opp{{\mathrm{op}}}

\subsection{$\taub$-split parabolic subgroups}
A parabolic subgroup $P$ of $W$ is called {\it $\t$-split} 
if it is the stabilizer of some point of $V^\t$ (i.e. if $P=W_{V^P \cap V^\t}^\ptw$ or, 
in other words, if $\VC(P) \cap V^\t  \neq \vide$). This is equivalent 
to say that $P$ is the stabilizer of some point of $V^*$. Note the following easy fact:
\equat\label{eq:inter split}
\text{\it The intersection of $\t$-split parabolic subgroups is $\t$-split.}
\endequat

In this case, $P$ is normalized by $\t$ and $\t$ is $P$-full, and 
we define the {\it $\t$-rank} of $P$ to be the number $\dim(V^P)^\t$. 
We denote by $\para_\t(W)$ the set of $\t$-split parabolic subgroups 
of $W$. If $P \in \para_\t(W)$, then $W_{V^\t}^\ptw \subset P$ and 
$P_\t=(P \cap W_{V^\t}^\setw)/W_{V^\t}^\ptw$ is a parabolic subgroup 
of $W_\t$. This shows that the map 
$$\fonctio{\para_\t(W)}{\para(W_\t)}{P}{P_\t}$$
is well-defined. 

\bigskip

\begin{lem}\label{lem:p bijectif}
The map
$$\fonctio{\para_\t(W)}{\para(W_\t)}{P}{P_\t}$$
is bijective.
\end{lem}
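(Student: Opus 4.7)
The plan is to construct an explicit inverse $\Psi \colon \para(W_\t) \to \para_\t(W)$: for $Q \in \para(W_\t)$ pick any $v \in V^\t$ with $(W_\t)_v = Q$ and set $\Psi(Q) = W_v$. Note that $W_v$ is automatically $\t$-split since $v \in V^\t$. The proof then reduces to (i) a short formal computation showing $(W_v)_\t = (W_\t)_v$ for every $v \in V^\t$, and (ii) a geometric identity implying that the stratification of $V^\t$ by $W_\t$-stabilizers is the same as the stratification by $W$-stabilizers restricted to $V^\t$.

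For (i), I would simply unwind the two definitions: since $W_{V^\t}^\ptw$ fixes every point of $V^\t$, it is contained in $W_v$, and then both $(W_v)_\t$ and $(W_\t)_v$ are visibly equal to the subquotient $(W_v \cap W_{V^\t}^\setw)/W_{V^\t}^\ptw$. As an immediate consequence, $\Phi \circ \Psi = \Id_{\para(W_\t)}$ regardless of which $v$ is chosen, so $\Phi$ is surjective and only injectivity (equivalently, well-definedness of $\Psi$) remains.

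The main geometric input is the identity
\[
V^{W_v} \cap V^\t \;=\; (V^\t)^{(W_\t)_v} \qquad (v \in V^\t).
\]
I would prove it by writing $V^{W_v} = \bigcap_{H \in \AC,\; v \in H} H$ (standard description of fixed points of a parabolic as an intersection of reflecting hyperplanes), intersecting with $V^\t$, and splitting the hyperplanes into those containing $V^\t$ (which contribute $V^\t$ and hence drop out) and those not containing $V^\t$ (which, after intersection with $V^\t$, are exactly the reflecting hyperplanes of $W_\t$ by Theorem 3.1(c)); the remaining expression matches the description of the right-hand side as the intersection of all reflecting hyperplanes of $W_\t$ through $v$. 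This is where Lehrer-Springer really enters, and I expect it to be the main (but short) obstacle.

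With (i) and (ii) in place, injectivity would follow from a single irreducibility argument. Fix $Q \in \para(W_\t)$. The stratum $\{v \in V^\t : (W_\t)_v = Q\}$ is a non-empty open subset of the irreducible linear variety $(V^\t)^Q$ and decomposes as a disjoint union
\[
\bigsqcup_{\substack{P \in \para_\t(W)\\ P_\t = Q}} \bigl(\VC(P) \cap V^\t\bigr).
\]
By the geometric identity applied to a generic point of $\VC(P) \cap V^\t$, each piece is open in $(V^P)^\t = (V^\t)^Q$; two disjoint non-empty open subsets of an irreducible variety cannot coexist, so exactly one piece is non-empty. This simultaneously shows that $W_v$ is independent of the chosen $v$ and that $\Phi$ is injective, completing the proof of bijectivity.
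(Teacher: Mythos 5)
Your argument is correct and rests on the same key geometric input as the paper, namely the identity $V^{W_v}\cap V^\t=(V^\t)^{(W_\t)_v}$ for $v\in V^\t$, which the paper isolates as a displayed formula inside its proof and establishes in the same way you propose: describe both sides as intersections of reflecting hyperplanes through $v$, split the hyperplanes of $W$ according to whether they contain $V^\t$, and apply Theorem~\ref{theo:lsp}(c) to match the residual hyperplanes with those of $W_\t$. Where you diverge is the closing step for injectivity. The paper finishes in a single line by observing that $P=W_{(V^P)^\t}^\ptw=W_{(V^\t)^{P_\t}}^\ptw$, so $P$ is recovered from $P_\t$ outright as a pointwise stabilizer; you instead run an irreducibility argument, noting that the sets $\VC(P)\cap V^\t$, as $P$ ranges over the $\t$-split parabolics with $P_\t=Q$, are pairwise disjoint open subsets of the irreducible linear variety $(V^\t)^Q$, and each is nonempty precisely when the corresponding $P$ occurs, so there can be only one. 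Both are sound; the paper's is the more economical move, while your route has two modest dividends: it makes explicit the formal identity $(W_v)_\t=(W_\t)_v$ (used silently in the paper's surjectivity step) and records the fact, useful elsewhere in the paper (compare~\eqref{eq:v split}), that the $W_\t$-stabilizer stratification of $V^\t$ agrees with the restriction to $V^\t$ of the $W$-stabilizer stratification of $V$.
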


\bigskip

\begin{proof}
First, if $Q$ is a parabolic subgroup of $W_\t$, then 
there exists $v \in V^\t$ such that $Q=(W_\t)_v$ and so, if we set $P=W_v$, then 
$P$ is $\t$-split and $P_\t=Q$. This shows that the map is surjective. 

\medskip

Now, if $P$ is a $\t$-split parabolic subgroups of $W$, 
then 
\equat\label{eq:vptau}
(V^P)^\t = (V^\t)^{P_\t}.
\endequat
\begin{quotation}
\begin{proof}[Proof of~\eqref{eq:vptau}]
Since $P$ is $\t$-split, there exists $v \in (V^P)^\t$ such that $P=W_v$ 
(and so $P_\t=(W_\t)_v$). Therefore, 
$$(V^\t)^{P_\t} = \bigcap_{\substack{H \in \AC(V^\t,W_\t) \\ v \in H}} H,$$
and so, by Theorem~\ref{theo:lsp}(c), 
$$(V^\t)^{P_\t} = V^\t \cap \Bigl(\bigcap_{\substack{H \in \AC(V,W) \\ v \in H}} H \Bigr)=V^\t \cap V^P,$$
as expected.
\end{proof}
\end{quotation}
Since $P=W_{(V^P)^\t}^\ptw$, the group $P_\t$ determines $P$. This means 
that the map of the lemma is injective.
\end{proof}

\bigskip

If $\QG \in \para(W_\t)/W_\t$ and $Q \in \QG$, we denote by $\VC_\t(Q)$ and $\UC_\t(\QG)$ 
the analogues of $\VC(P)$ and $\UC(\PG)$ for $\PG \in \para(W)/W$ and $P \in \PG$. 
We also define similarly $\VC_\t^*(Q)$ and $\UC_\t^*(\QG)$. 
The same argument as in the above proof (using Theorem~\ref{theo:lsp}(c)) 
shows that, if $P$ is $\t$-split, then 
\equat\label{eq:v split}
\VC_\t(P_\t)=\VC(P)^\t.
\endequat

\subsection{Normalizers}
Fix a $\t$-split parabolic subgroup $P$ of $W$. If $w \in W_\t$, then $\lexp{w}{P}$ 
does not depend on the representative of $w$ in $W_{V^\t}^\setw$, because 
$W_{V^\t}^\ptw \subset P$ by definition. So we can define the {\it normalizer} 
$\Nrm_{W_\t}(P)$ of $P$ in $W_\t$ and, by the bijectivity proved in Lemma~\ref{lem:p bijectif}, 
it coincides with the normalizer $\Nrm_{W_\t}(P_\t)$. The kernel 
of the well-defined composition
$$\Nrm_{W_\t}(P_\t)=\Nrm_{W_\t}(P) \longto \Nrmov_W(P)/P$$
is equal to $P_\t$, so we get a natural injective map
\equat\label{eq:injective map}
\Nrmov_{W_\t}(P_\t) \longinjto \Nrmov_W(P).
\endequat
Now, $\t$ acts on $\Nrmov_W(P)$. The next result describes the image of 
the above injective map:

\bigskip

\begin{lem}\label{lem:normalisateurs}
The image of the morphism~\eqref{eq:injective map} is equal to 
$\Nrmov_W(P)^\t$.
\end{lem}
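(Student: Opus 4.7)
The plan is to prove the two inclusions separately, with the forward inclusion being essentially formal and the reverse inclusion requiring the Lehrer--Springer isomorphism of Theorem~\ref{theo:lsp}(b).

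For the inclusion $\mathrm{Im} \subset \Nrmov_W(P)^\t$: given $\bar{m} \in \Nrm_{W_\t}(P_\t) = \Nrm_{W_\t}(P)$, lift it to some $m \in W_{V^\t}^\setw$. By~\eqref{eq:tau trivial}, $\t$ acts trivially on $W_\t$, so $\t(m) m^{-1} \in W_{V^\t}^\ptw \subset P$. Hence $m$ and $\t(m)$ have the same image in $\Nrmov_W(P) = \Nrm_W(P)/P$, which therefore lies in $\Nrmov_W(P)^\t$.

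For the reverse inclusion, start with $\bar{n} \in \Nrmov_W(P)^\t$ and lift it to $n \in \Nrm_W(P)$ with $\t(n) = np$ for some $p \in P$. Since $P$ is $\t$-split, by~\eqref{eq:v split} we can choose $v \in \VC(P) \cap V^\t = \VC_\t(P_\t)$; in particular $W_v = P$. The first key observation is that $nv$ still lies in $V^\t$: indeed, since $p \in P = W_v$ fixes $v$,
\[
\t(nv) = \t(n) \cdot \t(v) = np \cdot v = n \cdot (p \cdot v) = n \cdot v,
\]
so $nv \in \VC(P) \cap V^\t = \VC_\t(P_\t)$. Now $v$ and $nv$ are $W$-conjugate points of $V^\t$, so they have the same image in $(V/W)^\t$, hence by the Lehrer--Springer isomorphism $V^\t/W_\t \cong (V/W)^\t$ (Theorem~\ref{theo:lsp}(b)), they lie in the same $W_\t$-orbit. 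Pick a representative $m \in W_{V^\t}^\setw$ of an element $\bar{m} \in W_\t$ with $\bar{m} \cdot v = nv$; since $W_{V^\t}^\ptw$ fixes $v$ pointwise, this gives $m \cdot v = nv$.

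It then remains to check that $m \in \Nrm_W(P)$ and that $m$ maps to $\bar{n}$. From $m \cdot v = n \cdot v$, we get $n^{-1}m \in W_v = P$, so $m$ and $n$ have the same image in $\Nrmov_W(P)$, and moreover $mPm^{-1} = W_{mv} = W_{nv} = P$, so $m$ normalizes $P$. Therefore $\bar{m} \in \Nrm_{W_\t}(P) = \Nrm_{W_\t}(P_\t)$ and its image in $\Nrmov_W(P)$ equals $\bar{n}$, as required. The main obstacle to watch carefully is the step $\t(nv) = nv$: it is short but is the crucial point where the hypothesis $\t(n) \equiv n \pmod{P}$ (rather than the stronger $\t(n) = n$) is converted into geometric information on $V^\t$, using precisely that $p \in W_v$ fixes the chosen base point $v$.
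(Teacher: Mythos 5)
Your proof is correct and follows essentially the same route as the paper: both directions hinge on picking a base point $v \in \VC(P)^\t$, showing that the image of the relevant element of $\Nrm_W(P)$ again lies in $\VC(P)^\t$, and then invoking the injectivity of $V^\t/W_\t \to (V/W)^\t$ from Theorem~\ref{theo:lsp}(b) to transfer $W$-conjugacy back to $W_{V^\t}^\setw$-conjugacy. The only cosmetic difference is in the forward inclusion, where you apply the already-recorded fact~\eqref{eq:tau trivial} (that $\t$ acts trivially on $W_\t$) to get $\t(m)m^{-1}\in W_{V^\t}^\ptw\subset P$ directly, whereas the paper re-derives the same membership geometrically via~\eqref{eq:v split}; both steps rest on the same underlying observation.
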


\bigskip

\begin{proof}
Let $G$ denote the image of the morphism~\eqref{eq:injective map} 
and let $v \in \VC(P)^\t$ (so that $P=W_v$).

Let $w \in \Nrm_{W_\t}(P_\t)$ and let $\wdo$ be a representative 
of $w$ in $W_{V^\t}^\setw$. Then $\wdo(v) \in \VC(P)^\t$ by~\eqref{eq:v split}. 
So $\t(\wdo(v))=\wdo(v)$, i.e. $\wdo^{-1} \t(\wdo) \in P$. So the image of $w$ in 
$\Nrmov_W(P)$ is $\t$-invariant. This proves that $G \subset \Nrmov_W(P)^\t$. 

Conversely, let $w \in \Nrmov_W(P)^\t$ and let $\wdo$ denote a representative 
of $w$ in $\Nrm_W(P)$. Then $\t(\wdo(v))=\wdo (\wdo^{-1}\t(\wdo))(v)$. 
But $(\wdo^{-1}\t(\wdo))(v)=v$ since $\wdo^{-1}\t(\wdo) \in P$ by hypothesis. 
So $v$ and $\wdo(v)$ belong to $V^\t$ so, by Lehrer-Springer Theorem~\ref{theo:lsp}(b), 
there exists $x \in W_\t^\setw$ such that $\wdo(v)=x(v)$. In other words, 
$x^{-1}\wdo \in P$. Moreover, $v$ and $\wdo(v)$ both belong to $\VC(P)$, 
so $x$ normalizes $P$ (and $P_\t$), so $w$ is the image of $x$ under 
the morphism~\eqref{eq:injective map}. In other words, $\Nrmov_W(P)^\t \subset G$.
\end{proof}

\bigskip

Thanks to Lemma~\ref{lem:normalisateurs}, we will identify $\Nrmov_{W_\t}(P_\t)$ 
with $\Nrmov_W(P)^\t$. Note that $\Nrmov_{W_\t}(P_\t)=\Nrmov_W(P)^\t$ is the 
stabilizer of the set $\VC(P)^\t$ in $\Nrmov_W(P)$.

\bigskip

\subsection{Orbits of ${\boldsymbol{\tau}}$-split parabolic subgroups} 
We denote by $\para(W)^\t_\split$ the set of $\t$-split parabolic subgroups of $W$ and 
by $(\para(W)/W)^\t_\split$ the set of $W$-orbits of parabolic subgroups 
of $W$ containing a $\t$-split one. The group $W_{V^\t}^\setw$ acts on $\para(W)^\t_\split$ 
by conjugacy and, since any $\t$-split parabolic subgroup of $W$ contains $W_{V^\t}^\ptw$, 
this action factorizes through an action of $W_\t$. 
If $\PG \in (\para(W)/W)^\t_\split$, we 
set $\PG^\t_\split = \PG \cap \para(W)_\split^\t$. 
Now, let $\EC_P$ (resp. $\ECt_P$) denote the set of elements $w \in \Nrm_W(P)$ (resp. $\Nrmov_W(P)$) 
such that $\VC(P)^{w\t} \neq \vide$. Then $\Nrmov_W(P)$ acts by conjugacy on the set 
$\ECt_P\t$. If $w \in \EC_P$, we denote 
by $[w\t]$ the $\Nrmov_W(P)$-orbit of the image of $w\t$ in $\Nrmov_W(P)\t$. 

\bigskip

\begin{prop}\label{prop:param-tau-split}
Let $P$ be a $\t$-split parabolic subgroup and let $\PG$ denote its $W$-orbit. Then:
\begin{itemize}
\itemth{a} Let $x \in W$. Then $\lexp{x}{P}$ is $\t$-split if and only if 
$x^{-1}\t(x) \in \EC_P$. 

\itemth{b} The map $\PG_\split^\t \to \ECt_P\t/\Nrmov_W(P)$, $\lexp{x}{P} \mapsto [x^{-1}\t x]$ 
is well-defined and induces a bijection
$$\PG_\split^\t/W_\t \longiso \ECt_P\t/\Nrmov_W(P).$$
\end{itemize}
\end{prop}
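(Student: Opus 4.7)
The plan is to prove both parts by tracking points of $\VC(P) \subset V$ under the actions of $W$ and $\t$, with the Lehrer-Springer isomorphism $i_\t : V^\t/W_\t \longiso (V/W)^\t$ from Theorem~\ref{theo:lsp}(b) supplying the bridge between $W$-orbits in $V$ and $W_\t$-orbits in $V^\t$.

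For part~(a), I use that $\VC(\lexp{x}{P}) = x\,\VC(P)$: then $\lexp{x}{P}$ is $\t$-split iff some $v \in \VC(P)$ satisfies $xv \in V^\t$, and the identity $\t(xv) = xv$ rewrites as $(x^{-1}\t(x))\t(v) = v$, translating this to the nonemptyness of $\VC(P)^{(x^{-1}\t(x))\t}$. The normalizer condition $x^{-1}\t(x) \in \Nrm_W(P)$ then follows from the stabilizer calculation $(w\t)P(w\t)^{-1} = W_{(w\t)v} = W_v = P$ together with $\t(P) = P$ (which holds because $P$ is itself $\t$-split).

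For part~(b), well-definedness in the choice of $x$ representing $\lexp{x}{P}$ is immediate: replacing $x$ by $xn$ with $n \in \Nrm_W(P)$ conjugates $x^{-1}\t x$ by $n^{-1}$, which is absorbed by the quotient by $\Nrmov_W(P)$. For factorization through $W_\t$, I replace $x$ by $wx$ with $w \in W_{V^\t}^\setw$ and compute
\[ (wx)^{-1}\t(wx)\cdot \t \;=\; (x^{-1}ux)\cdot(x^{-1}\t(x)\cdot \t), \qquad u := w^{-1}\t(w). \]
The key observation is that $u$ acts trivially on $V^\t$ (since $\t$ fixes $V^\t$ pointwise and $w$ stabilizes it setwise), hence $u \in W_{V^\t}^\ptw \subset \lexp{x}{P}$, so $x^{-1}ux \in P$ and both elements represent the same class in $\Nrmov_W(P)\t$.

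Surjectivity proceeds by picking, for $\overline{w\t} \in \ECt_P\t$, a point $v \in \VC(P)$ with $w\t(v) = v$; then $\t(v) = w^{-1}v \in Wv$, so $\overline{v} \in (V/W)^\t$, and Lehrer-Springer supplies $y \in W$ with $v' := yv \in V^\t$. The parabolic $\lexp{y}{P} = W_{v'}$ is $\t$-split and $\t(v') = v'$ rearranges to $y^{-1}\t(y) \in Pw$, giving the desired preimage. For injectivity, given $\lexp{x}{P}$ and $\lexp{x'}{P}$ with equal images, I adjust $x$ by a normalizer element so that $x^{-1}\t(x) \equiv x'^{-1}\t(x') \pmod P$. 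The critical step is then to pick $v_2 \in \VC(\lexp{x'}{P}) \cap V^\t$, set $v' := x'^{-1}v_2 \in \VC(P)$, and observe that (using $P = W_{v'}$) the congruence yields $(x^{-1}\t(x))\t(v') = v'$; this forces $v_1' := xv' \in \VC(\lexp{x}{P}) \cap V^\t$ to lie in the same $W$-orbit as $v_2 = (x'x^{-1}) v_1'$. A second application of Lehrer-Springer places $v_1'$ and $v_2$ in the same $W_\t$-orbit in $V^\t$, so the stabilizers $\lexp{x}{P} = W_{v_1'}$ and $\lexp{x'}{P} = W_{v_2}$ lie in the same $W_\t$-orbit. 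The only genuinely subtle step is choosing $v_1' = xv'$ so that both parabolics are described from a common point of $\VC(P)$; the rest is book-keeping.
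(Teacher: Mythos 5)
Your proof is correct and takes essentially the same route as the paper: both verify part~(a) by translating the $\t$-split condition on $\lexp{x}{P}$ into the nonemptyness of $\VC(P)^{(x^{-1}\t(x))\t}$, and both prove part~(b) by choosing suitable points in $\VC(P) \cap V^\t$ or $\VC(\lexp{x}{P}) \cap V^\t$ and invoking Lehrer--Springer Theorem~\ref{theo:lsp}(b) to compare $W$-orbits with $W_\t$-orbits. One small point where you are slightly more careful than the paper: in the $W_\t$-invariance step, you explicitly route the inclusion $u = w^{-1}\t(w) \in W_{V^\t}^\ptw$ through $\lexp{x}{P}$ (which contains $W_{V^\t}^\ptw$ because $\lexp{x}{P}$ is $\t$-split) before conjugating by $x^{-1}$ to land in $P$ --- the paper only records $W_{V^\t}^\ptw \subset P$, which by itself does not yet give $x^{-1}ux \in P$; your version closes that gap.
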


\bigskip

\begin{proof}
(a) Assume that $\lexp{x}{P}$ is $\t$-split. In other words, there exists $v \in V^\t$ 
such that $\lexp{x}{P}=W_v$. Now, let $w=x^{-1}\t(x)$: then $x^{-1}(v) \in \VC(P)^{w\t}$ 
and so $w \in \EC_P$.

Conversely, assume that $w=x^{-1}\t(x) \in \EC_P$. Then there exists $v \in \VC(P)$ 
such that $v \in V^{w\t}$. Therefore, $P=W_v$ and so $\lexp{x}{P}=W_{x(v)}$. 
But $\t(x(v))=xx^{-1}\t(x)\t(v)=xw\t(v)=x(v)$, so $x(v) \in V^\t$. This implies 
that $\lexp{x}{P}$ is $\t$-split by definition.

\medskip

(b) Let us first show that the map (let us denote it by $\phi$) 
is well-defined. For this, let $x$ and $y$ be two elements 
of $W$ such that $\lexp{x}{P}=\lexp{y}{P}$ is $\t$-split. Then there exists $u \in \Nrm_W(P)$ 
such that $y=xu$. So $y^{-1}\t y = u^{-1} x^{-1} \t x u$ and so $[y^{-1}\t y]=[x^{-1}\t x]$, 
as expected. 

Let us now prove that $\phi$ is constant on $W_\t$-orbits. For this, let $w \in W_\t$ 
and $x \in W$ be such that $\x^{-1}\t(x) \in \EC_P$. Then 
$(xw)^{-1}\t(wx)=x^{-1} w^{-1}\t(w) x x^{-1}\t(x)$. 
But $w^{-1}\t(w) \in W_{V^\t}^\ptw \subset P$ by~\eqref{eq:tau trivial}, 
so the images of $x^{-1}\t(x)$ and $(wx)^{-1}\t(wx)$ in $\ECt_P$ coincide. 
Therefore, $\phi$ actorizes through a map 
$$\phit : \PG_\split^\t/W_\t \longto \EC_P\t/\Nrmov_W(P).$$

Let us prove that $\phit$ is injective. So let $x$ and $y$ be two elements of $W$ such that 
$\lexp{x}{P}$ and $\lexp{y}{P}$ are $\t$-split and $[x^{-1}\t x]=[y^{-1}\t y]$. 
Then there exists $u \in \Nrm_W(P)$ and $p \in P$ such that 
$y^{-1}\t y = u^{-1}x^{-1}\t x u p$. In particular,
$\VC(P)^{y^{-1}\t y}=\VC(P)^{u^{-1}x^{-1}\t x u}$. Since $\lexp{x}{P}=\lexp{xu}{P}$, 
we may (and we will) assume that $u=1$. As $\lexp{x}{P}$ is $\t$-split, the set 
$\VC(\lexp{x}{P})^\t$ is non-empty, so we may pick an element $v \in \VC(\lexp{x}{P})^\t$. 
Then
$$\t yx^{-1}(v)=yy^{-1}\t y x^{-1}(v)=yx^{-1}\t x p x^{-1}(v).$$
But $x^{-1}(v) \in \VC(P)$ so $px^{-1}(v)=x^{-1}(v)$. Consequently,
$\t yx^{-1}(v)= yx^{-1} \t(v)=yx^{-1}(v)$. In other words, 
$yx^{-1}(v) \in \VC(\lexp{y}{P})^\t \subset V^\t$. By Lehrer-Springer Theorem~\ref{theo:lsp}(b), 
there exists $a \in W_\t$ such that $yx^{-1}(v)=a(v)$. Then 
$$\lexp{y}{P}=\lexp{yx^{-1}}{(\lexp{x}{P})}=\lexp{yx^{-1}}{W_v}
=W_{yx^{-1}(v)}=W_{a(v)}=\lexp{a}{W_v}=\lexp{a}{(\lexp{x}{P})},$$
which shows that $\lexp{y}{P}$ and $\lexp{x}{P}$ are $W_\t$-conjugate.

Let us now prove that $\phit$ is surjective. So let $w \in \EC_P$. Then there exists 
$v \in \VC(P)^{w\t}$. So $W \cdot v \in (V/W)^\t$. By Lehrer-Springer Theorem~\ref{theo:lsp}, 
there exists $x \in W$ such that $x(v) \in V^\t$. Therefore, 
$\lexp{x}{P}=W_{x(v)}$ is $\t$-split and $v \in V^{x^{-1}\t x}$. 
So, if we set $p=w^{-1}x^{-1}\t(x)$, then $p(v)=v$ so $p \in P$ and 
$\phi(\lexp{x}{P})=[x^{-1}\t(x)]=[wp]=[w]$, as desired.
\end{proof}

\bigskip

\subsection{Stratification of ${\boldsymbol{(V/W)^\t}}$}\label{sub:strat}
Applying~\ref{sub:para} to the pair $(V^\t,W_\t)$, 
the variety $V^\t/W_\t$ admits a stratification $(\UC_\t(\QG))_{\QG \in \para(W_\t)/W_\t}$
while the variety $(V/W)^\t$ admits a stratification $(\UC(\PG)^\t)_{\PG \in \para(W)/W}$. 
Both varieties are isomorphic and so both stratifications can be compared: through this 
isomorphism, the first one is a refinement of the second one, as will be shown in 
Corollary~\ref{coro:stratification} below by using 
Proposition~\ref{prop:param-tau-split}.

\bigskip

\begin{prop}\label{prop:stratification}
Let $\PG \in \para(W)/W$. Then $\UC(\PG)^\t$ is non-empty if and only if $\PG$ contains 
a $\t$-split parabolic subgroup.
\end{prop}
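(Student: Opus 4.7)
My plan is to prove both implications directly, using only the surjectivity~\eqref{eq:w-full} of the natural map $V^\t \to (V/W)^\t$ (which is where the $W$-full hypothesis enters, via Springer's theorem).

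For the ``if'' direction, suppose $P \in \PG$ is $\t$-split. By definition there exists $v \in \VC(P) \cap V^\t$. Let $\bar v$ be the image of $v$ in $V/W$. Since $v \in \VC(P)$ and $P \in \PG$, we have $\bar v \in \UC(\PG)$; and since $v \in V^\t$ and $\t$ acts trivially on $V/W$ only on $\t$-invariant orbits, $\bar v$ is fixed by $\t$ (indeed $\t(\bar v)$ is the image of $\t(v)=v$). Hence $\bar v \in \UC(\PG)^\t$, which is therefore non-empty.

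For the ``only if'' direction, suppose $\UC(\PG)^\t \neq \vide$ and pick $\bar v \in \UC(\PG)^\t$. By~\eqref{eq:w-full}, the natural map $V^\t \to (V/W)^\t$ is onto, so there exists $v \in V^\t$ whose image in $V/W$ is $\bar v$. Since $\bar v \in \UC(\PG)$, there exists $P' \in \PG$ and $v' \in \VC(P')$ with $W\cdot v = W\cdot v'$; writing $v'=w\cdot v$ for some $w \in W$ we get $W_v = w^{-1} P' w$, which still lies in $\PG$. Setting $P=W_v$, we have $v \in \VC(P) \cap V^\t$, so $P$ is $\t$-split and $P \in \PG$.

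Neither implication is technically delicate; the key input is the surjectivity~\eqref{eq:w-full}, which was already established earlier. Both directions are short once that is in hand, and the argument proceeds by transporting the point $v$ between $V^\t$ and the orbit $\PG$ via the action of $W$.
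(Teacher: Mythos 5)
Your proof is correct and takes essentially the same route as the paper: the ``if'' direction is the direct observation that the image of a $\t$-fixed point is $\t$-fixed, and the ``only if'' direction uses the surjectivity of $V^\t \to (V/W)^\t$ (which is~\eqref{eq:w-full}, a consequence of Lehrer-Springer Theorem~\ref{theo:lsp}(b), which is what the paper cites). The only minor difference is that you spell out more explicitly the conjugation step to pass from some $P' \in \PG$ to $W_v$; the substance is the same.
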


\bigskip

\begin{proof}
If $\PG$ contains a $\t$-split parabolic subgroup $P$ and if $v \in V^\t$ is such that 
$P=W_v$, then the $W$-orbit of $v$ belongs to $\UC(\PG)^\t$ which is therefore non-empty. 
Conversely, if $\UC(\PG)^\t$ is non-empty, it then follows from Theorem~\ref{theo:lsp}(b) 
that there exists $v \in V^\t$ whose $W$-orbit belongs to $\UC(\PG)^\t$. By construction, 
$W_v \in \PG$ and is $\t$-split.
\end{proof}

\bigskip

After eliminating the empty pieces, Proposition~\ref{prop:stratification} shows that 
$(V/W)^\t$ admits a stratification $(\UC(\PG)^\t)_{\PG \in (\para(W)/W)_\split^\t}$. 
Let us decompose the pieces of this stratification into irreducible components. 
For this, fix a $\t$-split parabolic subgroup $P$ and let $\PG$ 
denote its conjugacy class. Then $\UC(\PG)^\t$ is smooth since 
$\UC(\PG)$ is smooth and $\t$ has finite order, and we have
$$\UC(\PG)^\t = (\VC(P)/\Nrmov_W(P))^\t= \bigl(\bigcup_{w \in \ECt_P} \VC(P)^{w\t}\bigr)/\Nrmov_W(P).$$
By definition of $\VC(P)$, $\VC(P)^{w\t} \cap \VC(P)^{w'\t}=\vide$ is $w \neq w'$. 
If $E$ is a subset of $\ECt_P \t$, we denote by $\VC(P)^E$ the (disjoint) union of 
the $\VC(P)^g$ for $g \in E$. Then 
\equat\label{eq:uptau}
\UC(\PG)^\t =  \bigcup_{E \in \ECt_P\t/\Nrmov_W(P)} \VC(P)^E/\Nrmov_W(P).
\endequat
% Now, if $E \in \ECt_P\t/W(P)$, we denote by $\PG_E$ the associated $W_\t$-orbit 
% of $\t$-split parabolic subgroups through the bijection of Proposition~\ref{prop:param-tau-split}(b). 
% Let $(\PG_E$ denote the $W_\t$-conjugacy class of parabolic subgroups of $W_\t$ corresponding 
% to $\PG_E$ through the bijection of Lemma~\ref{lem:p bijectif}. 
Then $\VC(P)^E/\Nrmov_W(P)$ is the image 
of some $\VC(P)^g$ for some $g \in E$ and so $\VC(P)^E/\Nrmov_W(P)$ is closed (in $\UC(\PG)$) 
and irreducible. So the decomposition~\eqref{eq:uptau} is the decomposition 
of $\UC(\PG)^\t$ into irreducible (i.e. connected because they are disjoint) 
components.

So the stratification $(\UC(\PG)^\t)_{\PG \in (\para(W)/W)_\split^\t}$ of $(V/W)^\t$ 
together with the decomposition~\eqref{eq:uptau} provides a finer stratification 
of $(V/W)^\t$, indexed by the $W_\t$-orbits of $\t$-split parabolic subgroups 
(by using the bijection of Proposition~\ref{prop:param-tau-split}(b)). 
On the other hand, $V^\t/W_\t$ admits a stratification $(\UC_\t(\QG))_{\QG \in \para(W_\t)/W_\t}$. 
Both stratifications coincide through the isomorphism $i_\t$, as shown by the next result:

\bigskip

\begin{coro}\label{coro:stratification}
Let $\PG \in (\para(W)/W)_\split^\t$, let $P \in \PG$ and let $E \in \ECt_P\t/\Nrmov_W(P)$. 
Let $\PG_E$ denote the $W_\t$-orbit of $\t$-split parabolic subgroups of $W$ 
associated with $E$ through the bijection of Proposition~\ref{prop:param-tau-split}(b). 
Let $\QG_E$ denote the $W_\t$-orbit of parabolic subgroups of $W_\t$ of the form 
$Q_\t$ for $Q \in \PG_E$ (see Lemma~\ref{lem:p bijectif}). 
Then 
$$i_\t(\UC_\t(\QG_E))=\VC(P)^E/\Nrmov_W(P).$$
\end{coro}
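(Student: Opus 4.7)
The plan is to identify both sides as subsets of $V/W$ via the isomorphism $i_\t$, and match them by a direct orbit computation. Since $i_\t : V^\t/W_\t \longiso (V/W)^\t$ is induced by the inclusion $V^\t \injto V$ followed by the quotient $V \to V/W$, applying $i_\t$ to a locally closed subset of $V^\t/W_\t$ just amounts to taking its image in $V/W$. So I will exhibit both $i_\t(\UC_\t(\QG_E))$ and $\VC(P)^E/\Nrmov_W(P)$ as the image in $V/W$ of one and the same subset of $V$.

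First I would fix a representative $Q \in \PG_E$ of the $W_\t$-orbit $\PG_E$: by Proposition~\ref{prop:param-tau-split}(b), I can write $Q = \lexp{u}{P}$ for some $u \in W$ with $u^{-1}\t(u) \in \EC_P$, and then $E=[u^{-1}\t(u)\t]$. By Lemma~\ref{lem:p bijectif}, the orbit $\QG_E$ contains $Q_\t$, and by definition $\UC_\t(\QG_E)$ is the image of $\VC_\t(Q_\t)$ in $V^\t/W_\t$. The identity~\eqref{eq:v split} applied to the $\t$-split parabolic $Q$ gives $\VC_\t(Q_\t)=\VC(Q)^\t$. A short computation (writing $v' = u^{-1}v$ for $v \in \VC(Q)$ and translating the condition $\t v = v$ through $u$) shows that
$$
\VC(Q)^\t \;=\; u\cdot \VC(P)^{u^{-1}\t(u)\,\t}.
$$

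Now the image of $\VC(Q)^\t$ in $V/W$ equals the image of $\VC(P)^{u^{-1}\t(u)\,\t}$, since the factor $u$ disappears under the $W$-action. On the other hand, $\VC(P)^E$ is by definition the union of the $\VC(P)^g$ for $g$ in the $\Nrmov_W(P)$-orbit $E$ of $u^{-1}\t(u)\t$; these pieces are pairwise disjoint (distinct $g$'s give distinct stabilizers on $V$), and they are permuted transitively by $\Nrmov_W(P)$. Hence the image of the single piece $\VC(P)^{u^{-1}\t(u)\,\t}$ in $\VC(P)/\Nrmov_W(P)=\UC(\PG)$ coincides with $\VC(P)^E/\Nrmov_W(P)$. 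Putting the two descriptions together yields
$$
i_\t\bigl(\UC_\t(\QG_E)\bigr) \;=\; \VC(P)^E/\Nrmov_W(P),
$$
as required.

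The only real obstacle is bookkeeping: one must carefully check that the representative $u$ chosen for $Q \in \PG_E$ is compatible, via the bijection of Proposition~\ref{prop:param-tau-split}(b), with a representative of $E$ in $\ECt_P\t$, and that replacing $u$ by another lift (or replacing $Q$ by a $W_\t$-conjugate) only modifies $u^{-1}\t(u)\t$ within its $\Nrmov_W(P)$-orbit $E$, so that the final image $\VC(P)^E/\Nrmov_W(P)$ is genuinely well-defined and independent of choices.
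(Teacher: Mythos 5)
Your proof is correct and follows essentially the same route as the paper's: pick a representative $Q=\lexp{u}{P}\in\PG_E$, use $\VC_\t(Q_\t)=\VC(Q)^\t$ from \eqref{eq:v split}, and conjugate by $u$ to identify $\VC(Q)^\t$ with $\VC(P)^{u^{-1}\t(u)\t}$ (which the paper writes more compactly as $\VC(P)^{x^{-1}\t x}$, the two being the same linear map), before passing to $\UC(\PG)\simeq\VC(P)/\Nrmov_W(P)$. You spell out the transitive $\Nrmov_W(P)$-permutation of the pieces of $\VC(P)^E$ and the independence-of-choices check a bit more explicitly than the paper does, but the key computation and the structure of the argument coincide.
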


\bigskip

\begin{proof}
Let $g \in E$ and let $x \in W$ be such that $[x^{-1}\t x] = [g]$ (the existence 
of such an $x$ is guaranteed by Proposition~\ref{prop:param-tau-split}(b)). 
We set $Q=\lexp{x}{P}$. Then $Q$ is $\t$-split by Proposition~\ref{prop:param-tau-split}(a) 
and $\PG_E$ (resp. $\QG_E$) is the $W_\t$-orbit of $Q$ (resp. $Q_\t$) by construction. 

Now, $i_\t(\UC_\t(\QG_E))$ is the image of $\VC(\lexp{x}{P})^\t$ in $(V/W)^P$ and, 
through the isomorphism $\VC(P)/\Nrmov_W(P) \simeq \UC(\PG)$, the result comes from 
the fact that $x^{-1}$ induces an isomorphism between $\VC(\lexp{x}{P})^\t$ 
 and $\VC(P)^{x^{-1}\t x}$.
\end{proof}

\bigskip

% \subsection{Completion} 
% Let $v \in V$. 
% We denote by $\mG(v)$ (or $\mG_{V,W}(v)$ if necessary) the ideal of $\CM[V/W]$ consisting 
% of functions of $\CM[V/W]=\CM[V]^W$ vanishing at $v$. As $\CM[V/W] \subset \Zb_k$, 
% we may define the ideal $\mG^{\Zb_k}(v)$ (or $\mG_{V,W}^{\Zb_k}(v)$) of $\Zb_k$ 
% generated by $\mG(v)$. Now, let $\Zbh_k^v$ (or $\Zbh_k^v(V,W)$) denote 
% the completion of $\Zb_k$ at $\mG^{\Zb_k}(v)$: by definition, 
% $$\Zbh_k^v = \varprojlim_j \Zb_k/\mG^{\Zb_k}(v)^j.$$ 
% By~\cite[Lem.~3.5(1)]{bellamy cuspidal}, 
% $\Zbh_k^v$ inherits a Poisson structure. 
% Recall~\cite[Coro.~5.4]{eisenbud} that the natural map $\Zb_k \longto \Zbh_k^v$ is injective, 
% so we will identify $\Zb_k$ with its image in $\Zbh_k^v$ and so view it 
% as a subalgebra of $\Zbh_k^v$.
% 
% In~\cite[Theo.~4.5]{bellamy cuspidal}, Bellamy constructs an isomorphism of Poisson algebras 
% \equat\label{eq:iso-completion}
% \iota_v : \Zbh_k^v(V,W) \longiso \Zbh_k^0(V,W_v).
% \endequat
% If moreover $\t \in \Nrm_{\Gb\Lb_\CM(V)}(W)$ is such that 
% $\t(v)=v$, then $W_v$ is $\t$-stable, the algebras $\Zbh_k^v(V,W)$ 
% and $\Zbh_k^0(V,W_v)$ inherits an action of $\t$, and the isomorphism $\iota_v$ 
% is $\t$-equivariant.
% 
% \bigskip

\section{The problem, the main result}\label{sec:symplectic}

\medskip

\subsection{Symplectic leaves}\label{sub:leaves}
Let $I_k$ denote the ideal of $\Zb_k$ generated by $(\t(z)-z)_{z \in \Zb_k}$. It 
is $\t$-stable. Recall from Remark~\ref{rem:poisson variety} that 
$\CM[\ZC_k^\t] = \Zb_k /\sqrt{I_k}=\Zb_k^\t /\sqrt{I_k^\t}$, 
and that $\Zb_k/I_k$ inherits a Poisson bracket which makes $\ZC_k^\t$ into 
an affine Poisson variety. Therefore, $\ZC_k^\t$ 
admits a stratification into symplectic leaves~\cite[\S{3.5}]{BG}. We denote by 
$\symp(\ZC_k^\t)$ the set of symplectic leaves of $\ZC_k^\t$. 

\bigskip

\begin{rema}\label{rem:fixe}
Note that $\ZC_k^\t$ is generally not irreducible, not connected, 
not equidimensional and that its irreducible components might 
not coincide with its connected components.\finl
\end{rema}

\bigskip

Since $\ZC_k$ has finitely many symplectic leaves~\cite[Prop.~7.4]{BG}, it follows from 
Corollary~\ref{coro:finitude} that $\ZC_k^\t$ has finitely many symplectic leaves too. 
They are obtained as in~\S\ref{sub:symp-general}. 

\bigskip

\begin{rema}\label{rem:symp-stable}
This description shows that the symplectic leaves of $\ZC_k^\t$ 
are $\CM^\times$-stable.\finl
\end{rema}

\bigskip

If $\SC$ is a symplectic leaf of $\ZC_k^\t$, we denote by $\pG_\SC$ 
the defining ideal of $\overline{\SC}$ in $\Zb_k/I_k$: it belongs to $\pspec(\Zb_k/I_k)$. 
Since $\ZC_k^\t$ has finitely many symplectic leaves, the map 
\equat\label{eq:poisson prime}
\fonctio{\symp(\ZC_k^\t)}{\pspec(\Zb_k/I_k)}{\SC}{\pG_\SC}
\endequat
is bijective (see~\eqref{eq:poisson prime general}). 
% The inverse is given as follows: 
% if $\pG \in \pspec(\Zb_k/I_k)$ corresponds to $\SC$ through this bijection, 
% then $\SC$ is the smooth locus of the closed irreducible subvariety defined by $\pG$. 

\bigskip

\subsection{${\boldsymbol{\t}}$-cuspidality} 
We define a {\it $\t$-cuspidal} symplectic 
leaf\footnote{This definition coincides with the notion of {\it cuspidal} 
leaf of $\ZC_k$ introduced by Bellamy~\cite[\S{5}]{bellamy cuspidal} in the case where $\t=1$.} 
to be a zero-dimensional symplectic leaf of $\ZC_k^\t$. 
We will therefore also call it a {\it $\t$-cuspidal} point. 
Through the bijection~\eqref{eq:poisson prime}, 
the set of $\t$-cuspidal points is naturally in bijection with the set 
$\pmax(\Zb_k/\sqrt{I_k})$ of 
maximal ideals of the algebra $\CM[\ZC_k^\t]=\Zb_k/\sqrt{I_k}$ which are also Poisson ideals (note that 
$\pmax(\Zb_k/I_k)= \pmax(\Zb_k/\sqrt{I_k})\subset \pspec(\Zb_k/\sqrt{I_k})$). 

\bigskip

\begin{rema}\label{rem:cusp-fixe}
It follows from Remark~\ref{rem:symp-stable} that $\t$-cuspidal points 
are fixed under the action of $\CM^\times$.\finl
\end{rema}

\bigskip

We denote by $\Cus_k^\t(V,W)$ the set of pairs $(P,p)$ where $P$ 
is a $\t$-split parabolic subgroup of $W$ and $p$ is a 
$\t$-cuspidal point of $\ZC_{k_P}(V_P,P)^\t$, where $k_P$ denotes 
the restriction of $k$ to the parabolic subgroup $P$. The group $W_\t$ 
acts on $\Cus_k^\t(V,W)$ and we denote by $\Cus_k^\t(V,W)/W_\t$ 
the set of its orbits in $\Cus_k^\t(V,W)$. 
If $(P,p) \in \Cus_k^\t(V,W)$, we denote by $[P,p]$ its $W_\t$-orbit. 

\bigskip

\subsection{Main result}
With the above notation, Theorem~A can be restated (and made more precise) as follows:

\bigskip

\noindent{\bf Theorem A.}
{\it There is a natural bijection (which will be explicitly constructed in Section~\ref{sec:tau-hc})
$$\fonctio{\Cus_k^\t(V,W)/W_\t}{\symp(\ZC_k^\t)}{[P,p]}{\SC_{P,p}.}$$
It satisfies that $\Upsilon_k(\SCov_{P,p})$ is the image of $(V^P)^\t \times (V^{*P})^\t$ 
in $V/W \times V^*/W$. In particular,
$$\dim \SC_{P,p} = 2 \dim (V^P)^\t.$$}

\bigskip

We will prove Theorem~A in the next sections. 
First, in Section~\ref{sec:0}, we will recall the proof, essentially 
due to Brown-Gordon~\cite[Prop.~7.4]{brown gordon} of Theorem~A 
whenever $k=0$ and $\t=1$. In Section~\ref{sec:0-tau}, we will use 
Lehrer-Springer Theorem~\ref{theo:lsp} to prove Theorem~A 
whenever $k=0$. In Section~\ref{sec:one-parameter}, we will use a deformation 
argument to attach to each symplectic leaf a $W_\t$-orbit of $\t$-split 
parabolic subgroups: in some sense, this is half of the construction 
of the above bijection. The second half will be constructed in 
Section~\ref{sec:tau-hc}, where the proof of Theorem~A 
will be completed.

Let us also restate Conjecture~B:

\bigskip

\centerline{\begin{minipage}{0.88\textwidth}
\noindent{\bf Conjecture B.} {\it Let $(P,p) \in \Cus_k^\t(V,W)$. 
Then there exists $l \in \paramset((V^P)^\t,\Nrmov_{W_\t}(P_\t))$ 
and a $\CM^\times$-equivariant isomorphism of Poisson varieties
$$\overline{\SC}_{P,p}^\nor \simeq \ZC_l((V^P)^\t,\Nrmov_{W_\t}(P_\t)).$$} 
\end{minipage}}

\bigskip

\section{Symplectic leaves of ${\boldsymbol{\ZC_0=(V \times V^*)/W}}$}\label{sec:0}

\medskip

The Poisson bracket on $\Zb_0=\CM[V \times V^*]^W$ is the one obtained by restriction from 
the usual Poisson bracket on $\CM[V \times V^*]$. The symplectic leaves of $\ZC_0$ 
have been described in~\cite[Prop.~7.4]{brown gordon}: 
we recall their description in this section, and give some more precision about the 
structure of their closure.

\def\Nrmo{{\overline{\Nrm}}}

\medskip

If $P \in \para(W)$, let $\VC\!\VC^*(P)$ denote the set of elements $(v,v^*) \in V \times V^*$ 
such that $W_v \cap W_{v^*} = P$. Again, the family $(\VC\!\VC^*(P))_{P \in \para(W)}$ 
is a stratification of $V \times V^*$ (the order between strata corresponds to 
the reverse order of the inclusion of parabolic subgroups). If $\PG \in \para(W)/W$, 
we let $\UC\!\UC^*(\PG)$ denote the image of $\VC\!\VC^*(P)$ in $(V \times V^*)/W$, 
where $P$ is any element of $\PG$. 
Then $(\UC\!\UC^*(\PG))_{\PG \in \para(W)/W}$ is a stratification of $(V \times V^*)/W$ 
(the order between strata corresponds to the reverse order of the inclusion, 
up to conjugacy, of parabolic subgroups).

Fix now $\PG \in \para(W)/W$ and $P \in \PG$. Then 
\equat\label{eq:vv*}
\VC(P) \times \VC^*(P) \subset \VC\!\VC^*(P) \subset V^P \times V^{*P}.
\endequat
Note that $\Nrmov_W(P)$ acts on $V^P \times V^{*P}$ and that $\VC\!\VC^*(P)$ is the open 
subset of $V^P \times V^{*P}$ on which it acts freely. The image of 
$\overline{\VC\!\VC^*(P)}=V^P \times V^{*P}$ is equal to $\overline{\UC\!\UC^*(\PG)}$. 

Recall from~\S\ref{sub:poisson-fixed} that $V^P \times V^{*P}$ is not a Poisson 
subvariety of $V \times V^*$ but that it inherits from $V \times V^*$ a Poisson 
structure. This Poisson  structure is the natural one endowed by the product of
a vector space with its dual: it is $\Nrmov_W(P)$-equivariant, so 
$(V^P \times V^{*P})/\Nrmov_W(P)$ is also a Poisson variety. By definition, 
$\Nrmov_W(P)$ acts freely on the open subset $\VC\!\VC^*(P)$, so 
the variety $\VC\!\VC^*(P)/\Nrmov_W(P)$ is smooth and its Poisson bracket 
makes it a symplectic variety. The next proposition is a particular case 
of the discussion preceding Proposition~\ref{prop:leaves-lisse}:

\bigskip

\begin{lem}\label{lem:quotient-z0}
Let $\PG \in \para(W)/W$ and let $P \in \PG$. Then:
\begin{itemize}
\itemth{a} The closed subvariety $\overline{\UC\!\UC^*(\PG)}$ is a Poisson subvariety 
of $(V \times V^*)/W$.

\itemth{b} The map $\VC\!\VC^*(P) \to \UC\!\UC^*(\PG)$ 
induces an isomorphism
$$\VC\!\VC^*(P)/\Nrmov_W(P) \longiso \UC\!\UC^*(\PG)$$
of Poisson varieties. 
\end{itemize}
\end{lem}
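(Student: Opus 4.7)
The plan is to deduce both parts from the general material of Section~\ref{sub:poisson-fixed} together with the Brown--Gordon description of $\symp(\ZC_0)$ recalled at the start of this section. For part~(a), Brown--Gordon's Proposition~7.4 identifies the symplectic leaves of $(V\times V^*)/W$ with the locally closed subvarieties $\UC\!\UC^*(\PG)$, $\PG\in\para(W)/W$. Via the bijection~\eqref{eq:poisson prime general}, the closure of any such leaf corresponds to a prime Poisson ideal, so $\overline{\UC\!\UC^*(\PG)}$ is automatically a closed Poisson subvariety of $(V\times V^*)/W$, which gives~(a).

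For the variety part of~(b), the morphism $\VC\!\VC^*(P)\to\UC\!\UC^*(\PG)$ factors through $\VC\!\VC^*(P)/\Nrmov_W(P)$: if $w\in W$ sends $(v,v^*)\in\VC\!\VC^*(P)$ to another point of $\VC\!\VC^*(P)$, then it conjugates $W_v\cap W_{v^*}=P$ to itself and thus belongs to $\Nrm_W(P)$. The induced morphism is bijective on closed points, its source is smooth (since $\Nrmov_W(P)$ acts freely on $\VC\!\VC^*(P)$) and its target is the smooth symplectic leaf $\UC\!\UC^*(\PG)$. Zariski's main theorem, or equivalently a direct invariant-theoretic identification of $\CM[V^P\times V^{*P}]^{\Nrmov_W(P)}$ with the restriction to $V^P\times V^{*P}$ of $\CM[V\times V^*]^W$, then promotes it to an isomorphism of algebraic varieties.

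To upgrade this to a Poisson isomorphism, I would apply Example~\ref{ex:sous-espace} to the $P$-action on $V\times V^*$: since $P\subset\Sb\pb(V\times V^*)$, the natural symplectic form on $V^P\times V^{*P}=(V\times V^*)^P$ coincides with the Poisson structure that $(V\times V^*)^P$ inherits from the symplectic variety $V\times V^*$ via Remark~\ref{rem:poisson variety}. The residual $\Nrmov_W(P)$-action preserves this bracket, so $(V^P\times V^{*P})/\Nrmov_W(P)$ is Poisson, and on the free-action locus $\VC\!\VC^*(P)$ it is symplectic. On the other side, $\UC\!\UC^*(\PG)$ carries the Poisson structure restricted from the Poisson subvariety $\overline{\UC\!\UC^*(\PG)}$ obtained in~(a). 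Both brackets are pushforwards of the symplectic bracket on $V\times V^*$ through compatible quotient/restriction diagrams, so they must agree.

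The only mild obstacle is this last identification: verifying that the two a priori distinct Poisson structures on $\VC\!\VC^*(P)/\Nrmov_W(P)$---one obtained by ``take $P$-fixed points, then quotient by $\Nrmov_W(P)$'' and the other by ``take the $W$-quotient, then restrict to the closed subvariety''---coincide. This is exactly the content of Example~\ref{ex:sous-espace} combined with the fact from Remark~\ref{rem:poisson variety} that the $\Nrmov_W(P)$-invariants in $\CM[V^P\times V^{*P}]$ form a Poisson subalgebra compatible, through restriction, with the $W$-invariants in $\CM[V\times V^*]$.
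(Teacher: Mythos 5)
Your argument is correct and, in spirit, is exactly what the paper intends (the paper itself only says the lemma is ``a particular case of the discussion preceding Proposition~\ref{prop:leaves-lisse}'' together with the paragraph that introduces the Poisson structure on $(V^P\times V^{*P})/\Nrmov_W(P)$). Two remarks on the way you assemble the pieces.

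First, a word of caution on logical order. For both part~(a) and for the smoothness of $\UC\!\UC^*(\PG)$ used in your ZMT step, you invoke the identification of the $\UC\!\UC^*(\PG)$ with the symplectic leaves of $\ZC_0$. Citing Brown--Gordon's Proposition~7.4 externally is legitimate, but inside the paper's own development this is Proposition~\ref{prop:leaves-0}, which is stated \emph{after} the present lemma and derived from it; and the smoothness of $(\XC/G)(\HG)$ in the proof of Proposition~\ref{prop:leaves-lisse} is itself deduced \emph{from} the variety isomorphism~\eqref{eq:iso-normalisateur}, not the other way around. So if you want an argument internal to the paper, you should establish~\eqref{eq:iso-normalisateur} first (this is a standard finite-group-quotient fact in characteristic zero and is what the discussion before Proposition~\ref{prop:leaves-lisse} is recording) and then read off smoothness of $\UC\!\UC^*(\PG)$ as a consequence.

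Second, the ``only mild obstacle'' you mention at the end can be dispatched more cleanly than by matching two constructions: the restriction map
$$\CM[V\times V^*]^W\;\longinjto\;\CM[V\times V^*]^P\;\longsurto\;\CM[V^P\times V^{*P}]$$
is a composition of Poisson morphisms by \S\ref{sub:poisson-fixed} (inclusion of Poisson subalgebras of invariants, then quotient by the Poisson ideal $\sqrt{I^P}$, cf.\ Remark~\ref{rem:poisson variety}), and it lands inside $\CM[V^P\times V^{*P}]^{\Nrmov_W(P)}$. Its kernel is the defining ideal of $\overline{\UC\!\UC^*(\PG)}$ — this gives~(a) directly, without appealing to the symplectic-leaf bijection — and the induced map $\CM[\overline{\UC\!\UC^*(\PG)}]\to\CM[V^P\times V^{*P}]^{\Nrmov_W(P)}$ is then a Poisson morphism which your bijectivity-plus-smoothness argument upgrades to a Poisson isomorphism over the open piece $\VC\!\VC^*(P)/\Nrmov_W(P)\simeq\UC\!\UC^*(\PG)$. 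Example~\ref{ex:sous-espace} then simply identifies the source's Poisson structure with the restricted symplectic one, as you say.
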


\bigskip

% \begin{proof}
% (a) Remark~\ref{rem:poisson variety} and Example~\ref{ex:sous-espace} imply that, 
% for any subgroup $P$ of $W$, the 
% natural restriction map $\CM[V \times V^*]^P \longto \CM[V^P \times V^{*P}]$ 
% is a morphism of graded Poisson algebras.
% The inclusion $\CM[V \times V^*]^W \subset \CM[V \times V^*]^P$ also respects the 
% Poisson bracket. So the kernel of the composition $\CM[V \times V^*]^W \to \CM[V^P \times V^{*P}]$ 
% is a Poisson and prime ideal of $\CM[V \times V^*]^W$: as it is the ideal of definition of 
% $\overline{\UC\!\UC^*(\PG)}$, this shows~(a).
% 
% \medskip
% 
% (b) The map $\VC\!\VC^*(P) \to \UC\!\UC^*(\PG)$ is surjective by definition. 
% It is injective since, if $z$ and $z'$ are two elements of $\VC\!\VC^*(P)$ such 
% that $z'=w(z)$ for some $w \in W$, then $w \in \Nrmov_W(P)$ because 
% $W_z=W_{z'}=P$. The fact that it is Poisson follows from the proof of~(a). 
% It remains to prove that it is an isomorphism. For this it is sufficient to show 
% that $\UC\!\UC^*(\PG)$ is smooth (as a bijective morphism between smooth 
% varieties is necessarily an isomorphism). But the morphism $(V \times V^*)/P \longto (V \times V^*)/W$ 
% is \'etale at points of $\VC\!\VC^*(P)/P$, so the property of being smooth can be read 
% in $(V \times V^*)/P$. But now the result follows from the fact that 
% $V^P \times V^{*P}$ is a smooth closed subvariety of $(V \times V^*)/P$.
% \end{proof}

\bigskip

\begin{coro}\label{coro:feuille-0-normalisation}
Let $\PG \in \para(W)/W$ and let $P \in \PG$. Then the above 
isomorphism $\VC\!\VC^*(P)/\Nrmov_W(P) \longiso \UC\!\UC^*(\PG)$ extends to an isomorphism 
of Poisson varieties
$$(V^P \times V^{*P})/\Nrmov_W(P) \longiso \overline{\UC\!\UC^*(\PG)}^\nor.$$
\end{coro}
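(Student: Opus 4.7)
My plan is to exhibit the displayed morphism as the normalization of $\overline{\UC\!\UC^*(\PG)}$ and then to deduce Poisson compatibility from the uniqueness statement in Kaledin's Theorem~\ref{theo:kaledin}. First, since $\Nrm_W(P)$ stabilizes $V^P$ and $V^{*P}$, and since its subgroup $P$ acts trivially on $V^P \times V^{*P}$, the composition
$$V^P \times V^{*P} \lhook\joinrel\longrightarrow V \times V^* \longto (V \times V^*)/W$$
descends to a morphism $\psi : (V^P \times V^{*P})/\Nrmov_W(P) \to (V \times V^*)/W$. The image of $\psi$ is closed (it is the image of a closed subvariety under the finite morphism $V \times V^* \to (V \times V^*)/W$) and contains the dense subset $\UC\!\UC^*(\PG)$ by~\eqref{eq:vv*}, hence equals $\overline{\UC\!\UC^*(\PG)}$.

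I would next verify that $\psi$, viewed as a morphism onto $\overline{\UC\!\UC^*(\PG)}$, is the normalization. Its source is normal, being the quotient of the smooth variety $V^P \times V^{*P}$ by a finite group. By Lemma~\ref{lem:quotient-z0}(b), $\psi$ restricts to an isomorphism on the dense open $\VC\!\VC^*(P)/\Nrmov_W(P)$, so $\psi$ is birational. For finiteness, $\CM[V^P \times V^{*P}]$ is a finite module over $\CM[\overline{\UC\!\UC^*(\PG)}]$ (since the composite $V^P \times V^{*P} \to \overline{\UC\!\UC^*(\PG)}$ is the restriction of a finite morphism to a closed subvariety), hence its subring $\CM[V^P \times V^{*P}]^{\Nrmov_W(P)}$ of invariants is also finite over the Noetherian ring $\CM[\overline{\UC\!\UC^*(\PG)}]$. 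A finite birational morphism with normal source is the normalization, which gives an isomorphism of varieties $(V^P \times V^{*P})/\Nrmov_W(P) \simeq \overline{\UC\!\UC^*(\PG)}^\nor$ extending, by construction, the one of Lemma~\ref{lem:quotient-z0}(b).

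It remains to compare Poisson structures. The target carries a unique Poisson bracket extending the one on $\overline{\UC\!\UC^*(\PG)}$ (a Poisson subvariety of $(V \times V^*)/W$ by Lemma~\ref{lem:quotient-z0}(a)), thanks to Kaledin's Theorem~\ref{theo:kaledin}. The source carries a natural Poisson bracket, coming from the fact that the symplectic subspace $V^P \times V^{*P} \subset V \times V^*$ is $\Nrmov_W(P)$-stable with $\Nrmov_W(P)$ acting by linear symplectic automorphisms (cf.\ Example~\ref{ex:sous-espace} and Remark~\ref{rem:poisson variety}). These two brackets agree on the dense open $\VC\!\VC^*(P)/\Nrmov_W(P)$ by Lemma~\ref{lem:quotient-z0}(b), and a Poisson bracket on an irreducible variety is determined by its restriction to any dense open subvariety; hence the two structures coincide globally. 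The only potentially delicate step is this Poisson compatibility, and it reduces via Kaledin's uniqueness to the already-known compatibility on the dense stratum $\VC\!\VC^*(P)/\Nrmov_W(P)$.
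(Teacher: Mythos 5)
Your proof is correct and follows essentially the same route as the paper: exhibit the map $(V^P \times V^{*P})/\Nrmov_W(P) \to \overline{\UC\!\UC^*(\PG)}$ as finite, birational (via Lemma~\ref{lem:quotient-z0}(b)) and with normal source, hence the normalization. You additionally spell out the Poisson compatibility via Kaledin's uniqueness and density of the open stratum, which the paper leaves implicit; this is a harmless (and welcome) amplification rather than a different argument.
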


\bigskip

\begin{proof}
The surjective map $\ph : (V^P \times V^{*P})/\Nrmov_W(P) \to \overline{\UC\!\UC^*(\PG)}$ 
induces an injection $\CM[\overline{\UC\!\UC^*(\PG)}]\subset \CM[V^P \times V^{*P}]^{\Nrmov_W(P)}$ 
between algebra of regular functions, and both algebras have the same fraction fields by 
Lemma~\ref{lem:quotient-z0}(b). But $\ph$ is finite and $\CM[V^P \times V^{*P}]^{\Nrmov_W(P)}$ 
is integrally closed, so $\CM[V^P \times V^{*P}]^{\Nrmov_W(P)}$ is the integral closure of 
$\CM[\overline{\UC\!\UC^*(\PG)}]$ in its fraction field. This completes the proof 
of the corollary.
\end{proof}

\bigskip

The next result follows immediately from Lemma~\ref{lem:quotient-z0} and is a particular 
case of Proposition~\ref{prop:leaves-lisse} (see 
also~\cite[Prop.~7.4]{brown gordon}):

\bigskip

\begin{prop}\label{prop:leaves-0}
The family $(\UC\!\UC^*(\PG))_{\PG \in \para(W)/W}$ of locally closed subvarieties 
is the stratification of $\ZC_0=(V \times V^*)/W$ by symplectic leaves.
\end{prop}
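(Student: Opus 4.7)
The plan is to deduce this immediately from Proposition~\ref{prop:leaves-lisse} applied to $\XC = V \times V^*$ equipped with the diagonal $W$-action. The variety $V \times V^*$ is smooth and carries its natural symplectic form (induced by the pairing $V \times V^* \to \CM$), on which $W$ acts by symplectomorphisms. The induced Poisson structure on the quotient $(V \times V^*)/W$ matches the one on $\ZC_0$ via the isomorphism~\eqref{eq:rees}.

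The first thing to check is that the indexing sets match. By construction, the loci $\XC(H)$ appearing in Proposition~\ref{prop:leaves-lisse} consist of those $(v,v^*)$ whose stabilizer in $W$ equals $H = W_v \cap W_{v^*}$, so they coincide with $\VC\!\VC^*(H)$. I would then identify the parabolic subgroups in the sense of Proposition~\ref{prop:leaves-lisse} (stabilizers of points of $V \times V^*$) with the elements of $\para(W)$: every $P \in \para(W)$ arises in this form by picking $(v,v^*) \in \VC(P) \times \VC^*(P) \subset \VC\!\VC^*(P)$ (so $W_v = W_{v^*} = P$ by~\eqref{eq:vv*}), and conversely, since $W = \langle \Ref(W)\rangle$, the intersection $W_v \cap W_{v^*}$ of two parabolic subgroups of $W$ is itself a parabolic subgroup of $W$ acting on $V$, by the standard Steinberg-type theorem for complex reflection groups.

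Next I need to verify the irreducibility hypothesis that unlocks the second, stronger conclusion of Proposition~\ref{prop:leaves-lisse}. By Lemma~\ref{lem:quotient-z0}(b), $\UC\!\UC^*(\PG) \simeq \VC\!\VC^*(P)/\Nrmov_W(P)$, and $\VC\!\VC^*(P)$ is a nonempty open subset of the irreducible variety $V^P \times V^{*P}$ by~\eqref{eq:vv*}, hence irreducible; the finite quotient $\UC\!\UC^*(\PG)$ is therefore also irreducible.

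With both conditions in place, Proposition~\ref{prop:leaves-lisse} yields directly that $\displaystyle (V \times V^*)/W = \dot{\bigcup}_{\PG \in \para(W)/W} \UC\!\UC^*(\PG)$ is the stratification of $\ZC_0$ into symplectic leaves. The only content beyond invoking the general machinery of Section~2 is the identification of parabolic subgroups for the $V \times V^*$-action with those for the $V$-action; the main (minor) obstacle is really just in being careful that intersections of the form $W_v \cap W_{v^*}$ remain in $\para(W)$, which is a property of complex reflection groups and not of the Calogero-Moser setting per se.
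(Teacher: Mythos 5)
Your proposal is correct and matches the paper's intended proof: the paper itself says the result "follows immediately from Lemma~\ref{lem:quotient-z0} and is a particular case of Proposition~\ref{prop:leaves-lisse}," and your argument simply spells out the two identifications needed to invoke Proposition~\ref{prop:leaves-lisse}. You are also right to flag the one subtle point — that $W_v \cap W_{v^*}$ lies in $\para(W)$, so that the parabolic subgroups of $W$ acting on $V\times V^*$ in the sense of Section~2 are exactly the elements of $\para(W)$; the paper silently assumes this when declaring $(\VC\!\VC^*(P))_{P\in\para(W)}$ to be a stratification of $V\times V^*$, and it is indeed a consequence of Steinberg's theorem (parabolics of a complex reflection group are the same for the $V$- and $V^*$-actions and are closed under intersection). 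Your irreducibility check via Lemma~\ref{lem:quotient-z0}(b) and the openness of $\VC\!\VC^*(P)$ in $V^P\times V^{*P}$ is exactly what the paper relies on as well.
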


\bigskip

Let us interprete the results of this section in terms of Theorem~A 
and Conjecture~B for $k=0$ and $\t=\Id_V$. 
First, it follows from Corollary~\ref{coro:feuille-0-normalisation} 
that, if $\PG \in \para(W)/W$ and if $P \in \PG$, then 
$\dim \UC\!\UC^*(\PG)=2\dim V^P$. Therefore, $\UC\!\UC^*(\PG)$ is $\Id_V$-cuspidal 
(we will say {\it cuspidal} for simplification) if and only if $V^P=0$. 
Therefore, there is at most one cuspidal leaf of $\ZC_0$ and there is actually one 
if and only if $V^W=0$ (in this case, this cuspidal leaf will be simply denoted by $0$, 
as it is the $W$-orbit of $0 \in V \times V^*$). This shows that
$$\Cus_0^{\Id_V}(V,W)=\{(P,0)~|~P \in \para(W)\} \longbij \para(W).$$
Consequently, the bijection $\Cus_0^{\Id_V}(V,W)/W \longiso \symp(\ZC_0)$ predicted by 
Theorem~A in the case where $k=0$ and $\t=\Id_V$ is simply given by the formula
$$\SC_{P,0} = \UC\!\UC^*(\PG)$$
for all $\PG \in \para(W)/W$ and all $P \in \PG$: this is the content of 
Proposition~\ref{prop:leaves-0}. 
Moreover, Corollary~\ref{coro:feuille-0-normalisation} proves Conjecture~B 
in this case:

\bigskip

\begin{prop}\label{prop:0}
Theorem~A and Conjecture~B hold if $k=0$ and $\t=\Id_V$.
\end{prop}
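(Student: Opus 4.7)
The plan is to assemble the structural results already established in this section: most of the content of both statements is contained in Proposition~\ref{prop:leaves-0} and Corollary~\ref{coro:feuille-0-normalisation}, so the main task is simply to identify the combinatorial parametrizations correctly.

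First I would specialize notation. When $\t=\Id_V$, we have $V^\t=V$, so $W_\t=W$, every parabolic subgroup of $W$ is $\t$-split (i.e.\ $\para_\t(W)=\para(W)$), and $P_\t=P$ for all $P\in\para(W)$. Next I would enumerate the cuspidal points of $\ZC_0(V_P,P)$ for each parabolic $P$. Applying Proposition~\ref{prop:leaves-0} to the pair $(V_P,P)$, the symplectic leaves of $(V_P\times V_P^*)/P$ are indexed by $\para(P)/P$, with the leaf associated to $\QG\ni Q$ having dimension $2\dim(V_P)^Q$. Since $V=V_P\oplus V^P$ forces $(V_P)^P=0$, zero-dimensionality forces $Q=P$, and the unique cuspidal point of $\ZC_0(V_P,P)$ is the image of $0\in V_P\times V_P^*$. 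Hence $\Cus_0^{\Id_V}(V,W)=\{(P,0)\mid P\in\para(W)\}$, and $\Cus_0^{\Id_V}(V,W)/W$ is naturally identified with $\para(W)/W$ via $[P,0]\longleftrightarrow\PG$.

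With this identification in hand, the bijection required by Theorem~A is precisely the content of Proposition~\ref{prop:leaves-0}: set $\SC_{P,0}=\UC\!\UC^*(\PG)$. The dimension formula $\dim\SC_{P,0}=2\dim V^P=2\dim(V^P)^\t$ is then immediate, and the claim about $\Upsilon_0(\SCov_{P,0})$ reduces to observing that the inclusion $\Pb\hookrightarrow\Zb_0$ realizes $\Upsilon_0$ as the natural quotient $(V\times V^*)/W\to V/W\times V^*/W$, under which $\SCov_{P,0}=\overline{\UC\!\UC^*(\PG)}$, being the image of $V^P\times V^{*P}$ in $(V\times V^*)/W$, maps to the image of $V^P\times V^{*P}$ in $V/W\times V^*/W$.

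For Conjecture~B, I would take $l=0$, the trivial parameter for the reflection group $\Nrmov_W(P)$ acting on $V^P$ (a reflection group by Steinberg's theorem, a special case of Theorem~\ref{theo:lsp}). Then $\ZC_0(V^P,\Nrmov_W(P))=(V^P\times V^{*P})/\Nrmov_W(P)$ as $\CM^\times$-equivariant Poisson varieties, and Corollary~\ref{coro:feuille-0-normalisation} furnishes exactly the desired Poisson isomorphism $(V^P\times V^{*P})/\Nrmov_W(P)\longiso\SCov_{P,0}^\nor$. The $\CM^\times$-equivariance is manifest because both sides inherit the standard scaling action on $V\oplus V^*$. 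Since every ingredient is already in place, there is no substantive obstacle here --- the proposition is a bookkeeping exercise on top of the structural results already proved.
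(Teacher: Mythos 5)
Your proof is correct and follows essentially the same route as the paper: specialize to $\t=\Id_V$, observe $(V_P)^P=0$ so each $\ZC_0(V_P,P)$ has a unique cuspidal point (the image of the origin), identify $\Cus_0^{\Id_V}(V,W)/W$ with $\para(W)/W$, invoke Proposition~\ref{prop:leaves-0} for Theorem~A and Corollary~\ref{coro:feuille-0-normalisation} with $l=0$ for Conjecture~B. This is exactly what the paper does in the discussion preceding the proposition.

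One parenthetical in your write-up is incorrect and worth flagging, even though it does not affect the argument: you assert that $\Nrmov_W(P)$ acts on $V^P$ as a reflection group ``by Steinberg's theorem, a special case of Theorem~\ref{theo:lsp}.'' Steinberg's theorem concerns pointwise stabilizers (showing $W_{V^P}^\ptw=P$ is generated by reflections), not the quotient $\Nrm_W(P)/P$; and Theorem~\ref{theo:lsp} specialized to $\t=\Id_V$ merely says $W$ is a reflection group on $V$, which gives no information about $\Nrmov_W(P)$. In fact $\Nrmov_W(P)$ need not act as a reflection group on $V^P$ in general. Fortunately your argument never uses this: for $l=0$ the variety $\ZC_0(V^P,\Nrmov_W(P))$ is simply $(V^P\times V^{*P})/\Nrmov_W(P)$ regardless of whether $\Nrmov_W(P)$ is generated by reflections, and Corollary~\ref{coro:feuille-0-normalisation} supplies the Poisson isomorphism directly. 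You should delete or correct the parenthetical.
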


\section{Symplectic leaves of ${\boldsymbol{\ZC_0^\t}}$}\label{sec:0-tau}

\medskip

We have $\ZC_0^\t=((V \times V^*)/W)^\t$. So, for studying its symplectic leaves, 
the next consequence of Lehrer-Springer Theorem~\ref{theo:lsp} will be crucial:

\bigskip

\begin{prop}\label{prop:springer-double}
The natural map 
$$\ii_\t : (V^\t \times V^{*\t})/W_\t \longto ((V \times V^*)/W)^\t=\ZC_0^\t$$
is a finite bijective morphism of Poisson varieties: it is the normalization 
of the variety $\ZC_0^\t$. 
\end{prop}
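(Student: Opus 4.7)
The plan is to construct the map, verify Poisson-compatibility and finiteness, prove bijectivity on closed points (the main step), and conclude via a normalization argument.

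First I would verify that $\ii_\t$ is well-defined. The inclusion $V^\t \times V^{*\t} \hookrightarrow V \times V^*$ is equivariant for $W_{V^\t}^\setw$. A key preliminary point is that $W_{V^\t}^\ptw$ also acts trivially on $V^{*\t}$: using the $\t$-stable decomposition $V = V^\t \oplus V_\t$ one has $V^{*\t} = (V^\t)^*$ (the annihilator of $V_\t$ in $V^*$), and any $p \in W_{V^\t}^\ptw$ — which fixes $V^\t$ pointwise and necessarily stabilizes $V_\t$ — acts trivially on $(V^\t)^*$. Hence the composite inclusion factors through $(V^\t \times V^{*\t})/W_\t$ and lands in $\ZC_0^\t$, yielding $\ii_\t$. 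Poisson compatibility combines Example 2.5 (the induced symplectic form on $V^\t \times V^{*\t}$ agrees with the restriction from $V \times V^*$) and Remark 2.4 ($\t$-fixed subvarieties inherit a Poisson bracket through their embedding into the quotient by $\langle \t \rangle$). Finiteness is immediate: $V^\t \times V^{*\t} \hookrightarrow V \times V^* \twoheadrightarrow \ZC_0$ is finite, and finiteness is preserved under the quotient by $W_\t$ on the source and restriction to the closed subvariety $\ZC_0^\t$ on the target.

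Bijectivity on closed points is the heart of the proof, and relies on a two-stage application of Lehrer-Springer (Theorem~3.1). For surjectivity, lift $\bar p \in \ZC_0^\t$ to $(v,v^*) \in V \times V^*$; the $\t$-fixed orbit condition gives $w_0 \in W$ with $\t(v,v^*) = w_0(v,v^*)$. Applying Theorem~3.1(b) to the $\t$-stable orbit $Wv \subset V$ produces $x_1 \in W$ with $v_1 := x_1 v \in V^\t$; setting $v_1^* := x_1 v^*$ and $w_1 := x_1 w_0 x_1^{-1}$, one finds $w_1 \in P := W_{v_1}$ (since $w_1 v_1 = \t v_1 = v_1$). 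The parabolic $P$ is $\t$-stable because $v_1 \in V^\t$, and the $P$-orbit of $v_1^*$ in $V^*$ is $\t$-stable because $\t v_1^* = w_1 v_1^* \in P v_1^*$. A second application of Lehrer-Springer — this time to the reflection group $P$ acting on $V^*$ with the induced automorphism $\t$ (after adjusting the coset representative within $P\t$ if necessary to enforce the $P$-fullness hypothesis, which corresponds to moving $v_1^*$ within its $P$-orbit) — yields $y \in P$ with $yv_1^* \in V^{*\t}$; as $y$ fixes $v_1$, the translate $yx_1(v,v^*)$ lies in $V^\t \times V^{*\t}$. For injectivity modulo $W_\t$, suppose $(v_1,v_1^*)$ and $(v_2,v_2^*)$ in $V^\t \times V^{*\t}$ are $W$-conjugate by $w$; Theorem~3.1(b) first produces $\bar w \in W_\t$ with $\bar w v_1 = v_2$, and then the residual freedom inside the stabilizer $(W_\t)_{v_1}$, together with a parallel Lehrer-Springer argument for the stabilizer parabolic acting on $V^*$, allows one to refine this $\bar w$ so that also $\bar w v_1^* = v_2^*$.

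Finally, $(V^\t \times V^{*\t})/W_\t$ is an irreducible normal affine variety (quotient of an affine space by a finite group), and the bijective image of an irreducible variety is irreducible, so $\ZC_0^\t$ is irreducible as well. A finite bijective morphism between reduced varieties of equal dimension is birational, and a finite birational morphism with normal source is automatically the normalization of its target, which closes the argument. The main obstacle will be the bijectivity step, and within that the second application of Lehrer-Springer to the parabolic $P$ acting on $V^*$: one must check that the $W$-fullness of $\t$, combined with the fact that $P$ is the stabilizer of a point of $V^\t$, carries enough information to invoke Lehrer-Springer surjectivity for the triple $(V^*, P, \t)$, possibly after replacing $\t$ by a $P$-full representative of the same coset in $P\t$.
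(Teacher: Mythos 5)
Your proof follows the same route as the paper's: reduce everything to bijectivity on closed points, then apply Lehrer--Springer twice --- first on $V$ to move $v$ into $V^\t$, then again to the parabolic $W_{v_1}$ acting on $V^*$ to move $v_1^*$ into $V^{*\t}$ --- with injectivity handled by a parallel two-stage argument, and the normalization statement obtained from finite $+$ bijective $+$ normal source.

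Two points need attention. First, the ``main obstacle'' you flag at the end --- checking that $\t$ is $W_{v_1}$-full so that Lehrer--Springer can be invoked for $(V^*, W_{v_1}, \t)$ --- is not an obstacle and cannot be handled by replacing $\t$: $P$-fullness for a subgroup $P\subseteq W$ is a \emph{weaker} condition than $W$-fullness, since
$\dim V^\t = \max_{w\in W}\dim V^{w\t} \ge \max_{p\in W_{v_1}}\dim V^{p\t} \ge \dim V^\t$
forces equality (and $\dim V^g = \dim V^{*g}$ for any finite-order $g$, so fullness on $V$ and $V^*$ agree); the paper records this observation in \S\ref{sub:springer} when introducing $\t$-split parabolics. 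Your proposed escape hatch of replacing $\t$ by a $P$-full element $p\t$ would in fact change the target $V^{*\t}$ to $V^{*p\t}$ and derail the argument, so it is fortunate that no replacement is needed. Second, your conjugation bookkeeping is off: with $v_1 = x_1 v$, $v_1^* = x_1 v^*$ and $\t(v,v^*) = w_0(v,v^*)$, the element of $W_{v_1}$ carrying $v_1^*$ to $\t(v_1^*)$ is $\t(x_1)\,w_0\,x_1^{-1}$, not $x_1 w_0 x_1^{-1}$; with your formula the claim $w_1 v_1 = \t v_1$ fails, since $w_1 v_1 = x_1\t(v)$ whereas $\t v_1 = \t(x_1)\t(v)$. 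This is harmless for the logic (all one needs is \emph{some} $w_1\in W_{v_1}$ with $\t(v_1^*) = w_1 v_1^*$, which exists because $v_1\in V^\t$ and the $W$-orbit of $(v_1,v_1^*)$ is $\t$-stable), but the explicit formula should be corrected.
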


\bigskip

\begin{proof}
Only the statement on the bijectivity needs to be proved, the others 
being obvious or immediate consequences.
Let us first prove that $\ii_\t$ is injective. 
Let $(v_1,v_1^*)$ and $(v_2,v_2^*) \in V^{\t} \times V^{*\t}$ be 
such that $(v_2,v_2^*)$ belong to the $W$-orbit of $(v_1,v_1^*)$. 
Then there exists $a \in W$ such that $(v_2,v_2^*)=a(v_1,v_1^*)$. 
By Theorem~\ref{theo:lsp}(b), there exists $b \in W_{V^\t}^\setw$ 
such that $v_2=b(v_1)$. Therefore, $b^{-1}a(v_1)=v_1$ and 
$b^{-1}(v_2^*) = b^{-1}a(v_1^*)$. In other words, $b^{-1}a$ belongs 
to the stabilizer $W_{v_1}$ of $v_1$ in $W$ (it is a parabolic subgroup). 
Since $\t(v_1)=v_1$, $\t$ normalizes $W_{v_1}$. Hence, since 
$\t$ is $W_{v_1}$-full by~\eqref{eq:w-full}, we may apply Theorem~\ref{theo:lsp}(b) 
to the pair $(W_{v_1},\t)$ so that, by dualizing, 
there exists $c \in (W_{V^\t}^\setw)_{v_1}$ such that $b^{-1}(v_2^*)=c(v_1^*)$. 
Therefore, $bc \in W_{V^\t}^\setw$ and $bc(v_1,v_1^*)=(v_2,v_2^*)$, as desired.

\medskip

Let us now prove that $\ii_\t$ is surjective. Let $(v,v^*) \in V \times V^*$ 
be such that its $W$-orbit is $\t$-stable. By Theorem~\ref{theo:lsp}(b), 
there exists $x \in W$ such that $\t(x(v))=x(v)$. So, by replacing 
$(v,v^*)$ by $x(v,v^*)$ if necessary, we may, and we will, assume that 
$\t(v)=v$. Therefore, there exists $a \in W$ such that 
$(\t(v),\t(v^*))=(a(v),a(v^*))$. In other words, $a(v)=v$ and $\t(v^*)=a(v^*)$. 
So $a$ belongs to the parabolic subgroup $W_v$, which is $\t$-stable. 
Applying again Theorem~\ref{theo:lsp}(b) to $(W_v,\t)$ (since $\t$ is $W_v$-full 
by~\eqref{eq:w-full}), and dualizing, 
one gets that there exists $b \in W_v$ such that $\t(b(v^*))=b(v^*)$. 
Therefore, $ab(v,v^*) \in V^\t \times V^{*\t}$, as desired.
\end{proof}

\bigskip

\begin{rema}\label{rem:normal?}
We do not know if there are examples of pairs $(W,\t)$ 
such that the variety $\ZC_0^\t$ is not normal. By the above proposition, 
saying that $\ZC_0^\t$ is normal is equivalent to saying that any $W_\t$-invariant 
polynomial function on $V^\t \times V^{*\t}$ extends to a $W$-invariant 
polynomial function on $V \times V^*$.\finl
\end{rema}

\bigskip

% Whenever $W_{V^\t}^\ptw=1$, we follow Springer terminology and 
% say that $\t$ is {\it regular}. This is equivalent to say 
% that $V_\reg^\t\neq\vide$. In this case, the above Theorem~\ref{theo:lsp} 
% was first proved by Springer with the 
% following more precise version~\cite[Prop.~3.5~and~Theo.~4.2]{springer}:
% 
% \bigskip
% 
% \begin{theo}[Springer]\label{theo:springer}
% Assume that $\t$ is full and regular. Then $W_{V^\t}^\setw=W^\t$, and $W^\t$ acts as a reflection 
% group on $V^\t$ and the natural map $V^\t/W^\t \to (V/W)^\t$ 
% is an isomorphism of varieties.
% \end{theo}
% 
% \bigskip

\bigskip

A bijective morphism of Poisson varieties does not necessarily induce 
a bijection between symplectic leaves, but it turns out that this holds for our map $\ii_\t$, 
as shown by the Corollary~\ref{coro:uu-stratification} below. Before proving it, let us introduce some
notation. If $Q$ is a parabolic subgroup of $W_\t$, we denote by $\VC\!\VC^*_\t(Q)$ 
the set of pairs $(v,v^*) \in V^\t \times V^{*\t}$ such that $Q=W_v \cap W_{v^*}$. 
If $\QG$ denotes the $W_\t$-orbit of $Q$, we denote by $\UC\!\UC^*_\t(\QG)$ 
the image of $\VC\!\VC^*_\t(Q)$ in $(V^\t \times V^{*\t})/W_\t$. By Proposition~\ref{prop:leaves-0} 
applied to the pair $(V^\t,W_\t)$, the locally closed subvariety 
$\UC\!\UC^*_\t(\QG)$ is a symplectic leaf of 
$(V^\t \times V^{*\t})/W_\t$ and all the symplectic leaves are obtained in this way. 
Note first the following easy fact:

\bigskip

\begin{lem}\label{lem:double-tau}
Let $\PG \in \para(W)/W$, let $P \in \PG$ and let $w \in \Nrmov_W(P)$. Then:
\begin{itemize}
\itemth{a} $\VC\!\VC^*(P)^{w\t} \neq \vide$ if and only if $\VC(P)^{w\t} \neq \vide$.

\itemth{b} $\UC\!\UC^*(\PG)^\t \neq \vide$ if and only if $\UC(\PG)^\t \neq \vide$.
\end{itemize}
\end{lem}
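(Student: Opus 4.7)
The easy direction of both (a) and (b) is immediate: if $v \in \VC(P)^{w\t}$, then $(v,0) \in \VC\!\VC^*(P)^{w\t}$, because $W_v \cap W_0 = P \cap W = P$ and $w\t$ fixes $0$. For (b), one picks a $\t$-split representative $P \in \PG$ (which exists by Proposition~\ref{prop:stratification} when $\UC(\PG)^\t \neq \vide$), and the $W$-orbit of $(v,0)$ then lies in $\UC\!\UC^*(\PG)^\t$.

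For the hard direction of (a), pick $(v,v^*) \in \VC\!\VC^*(P)^{w\t}$. Its $W$-orbit is $\t$-stable, so Proposition~\ref{prop:springer-double} yields $(v',v'^*) \in V^\t \times V^{*\t}$ and $a \in W$ with $(v,v^*) = a(v',v'^*)$. The parabolics $W_{v'}$ and $W_{v'^*}$ are both $\t$-split (as stabilizers of points of $V^\t$ and $V^{*\t}$), so $P_0 := W_{v'} \cap W_{v'^*}$ is $\t$-split by~\eqref{eq:inter split}; moreover $P = aP_0a^{-1}$. Expanding the identity $w\t(v,v^*) = (v,v^*)$ via $(v,v^*)=a(v',v'^*)$ and using $\t(v',v'^*)=(v',v'^*)$ gives $a^{-1}w\t(a) \in P_0$, which rearranges to $w = p \cdot w_1$ with $p \in P$ and $w_1 := a\t(a)^{-1} \in \Nrm_W(P)$. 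Now choose any $v_0 \in \VC(P_0)^\t$ (nonempty since $P_0$ is $\t$-split). Then $av_0 \in \VC(P)$ because $W_{av_0} = aP_0 a^{-1} = P$, and a direct computation gives $(w_1\t)(av_0) = a\t(a)^{-1}\t(a)v_0 = av_0$, hence $(w\t)(av_0) = p\cdot av_0 = av_0$ (since $p \in P = W_{av_0}$). Thus $av_0 \in \VC(P)^{w\t}$.

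Part (b) admits the same proof: for $[v,v^*] \in \UC\!\UC^*(\PG)^\t$, Proposition~\ref{prop:springer-double} lifts the orbit to some $(v',v'^*) \in V^\t \times V^{*\t}$; then $W_{v'} \cap W_{v'^*} \in \PG$ is $\t$-split by~\eqref{eq:inter split}, and Proposition~\ref{prop:stratification} yields $\UC(\PG)^\t \neq \vide$. The only substantive step is the bookkeeping identity $w = p \cdot a\t(a)^{-1}$ extracted from the $w\t$-invariance of $(v,v^*)$; everything else is a direct application of Lehrer-Springer theory through Proposition~\ref{prop:springer-double} and the closure of $\t$-split parabolics under intersection. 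No serious obstacle is anticipated.
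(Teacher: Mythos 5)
Your proof is correct, but it takes a route genuinely different from the paper's. For the ``if'' direction of~(a), you send $v$ to $(v,0)$; the paper instead asserts (without elaboration) that $\VC(P)^{w\t}\neq\vide$ implies $\VC^*(P)^{w\t}\neq\vide$ and takes a pair $(v,v^*)$ with both entries $w\t$-fixed. For the ``only if'' direction, the paper again uses that duality assertion, applied to $W_{v^*}$, to produce $v'\in\VC(W_{v^*})^{w\t}$, then observes that the span $S$ of $v,v'$ lies in $V^{w\t}$ and satisfies $W_S^\ptw=P$, so a generic point of $S$ lands in $\VC(P)^{w\t}$. You instead conjugate $(v,v^*)$ into $V^\t\times V^{*\t}$ via Proposition~\ref{prop:springer-double}, use~\eqref{eq:inter split} to see that $P_0=W_{v'}\cap W_{v'^*}$ is $\t$-split, and then carry a $\t$-fixed point of $\VC(P_0)$ back to a $w\t$-fixed point of $\VC(P)$ by the cocycle computation $w=p\cdot a\t(a)^{-1}$. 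For~(b) the paper deduces it from~(a) via Lemma~\ref{lem:quotient-z0}(b), whereas you argue directly from Propositions~\ref{prop:springer-double} and~\ref{prop:stratification}. Both approaches rest on Lehrer--Springer theory in the end, but yours channels it through the already-proved Proposition~\ref{prop:springer-double} and so makes the needed $V$--$V^*$ duality explicit at precisely the spot the paper leaves implicit; the paper's proof is more hands-on and elementary in what it invokes, at the cost of leaving that duality step to the reader.
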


\bigskip

\begin{proof}
Note that~(a) implies~(b) by Lemma~\ref{lem:quotient-z0}(b). On the other hand, 
if $\VC(P)^{w\t} \neq \vide$, then $\VC^*(P)^{w\t} \neq \vide$. So, if we pick 
$v \in \VC(P)^{w\t}$ and $v^* \in \VC^*(P)^{w\t}$, then $(v,v^*) \in \VC\!\VC^*(P)^{w\t}$. 
This proves the ``if'' part of~(a).

Conversely, if $\VC\!\VC^*(P)^{w\t} \neq \vide$, pick $(v,v^*) \in \VC\!\VC^*(P)^{w\t}$. 
Then $\VC^*(W_{v^*})^{w\t} \neq \vide$ so $\VC(W_{v^*})^{w\t}$. Pick $v' \in \VC(W_{v^*})^{w\t}$ 
and let $S$ denote the subspace of $V$ generated by $v$ and $v'$. Then $P=W_S^\ptw$, so there 
exists $v'' \in S$ such that $W_{v''} = P$. But $v'' \in V^{w\t} \cap \VC(P)$, 
which proves the ``only if'' part of~(a).
\end{proof}

\bigskip

The above Lemma allows to apply to the bijective morphism of varieties 
$\ii_\t : (V^\t \times V^{*\t})/W_\t \longto \ZC_0^\t$ the same arguments 
as in~\S\ref{sub:strat}. For instance, if $\PG \in \para(W)/W$, then 
it follows from Lemma~\ref{lem:double-tau} and Proposition~\ref{prop:stratification} 
that $\UC\!\UC^*(\PG)^\t \neq \vide$ if and only if $\PG$ contains a $\t$-split 
parabolic subgroup. 

Moreover, if $\PG \in (\para(W)/W)_\split^\t$ and if $P \in \PG$ is $\t$-split, 
then the $\t$-equivariant isomorphism $\UC\!\UC^*(\PG) \simeq \VC\!\VC^*(P)/\Nrmov_W(P)$ 
induces a decomposition into irreducible components
\equat\label{eq:uuptau}
\UC\!\UC^*(\PG)^\t =  \bigcup_{E \in \ECt_P\t/\Nrmov_W(P)} \VC\!\VC^*(P)^E/\Nrmov_W(P),
\endequat
where $\VC\!\VC^*(P)^E$ is defined in the same way as $\VC(P)^E$. Similarly, 
the analogue of Corollary~\ref{coro:uu-stratification} is given as follows:

\bigskip

\begin{prop}\label{prop:uu-stratification}
Let $\PG \in (\para(W)/W)_\split^\t$, let $P \in \PG$ and let $E \in \ECt_P\t/\Nrmov_W(P)$. 
Let $\PG_E$ denote the $W_\t$-orbit of $\t$-split parabolic subgroups of $W$ 
associated with $E$ through the bijection of Proposition~\ref{prop:param-tau-split}(b). 
Let $\QG_E$ denote the $W_\t$-orbit of parabolic subgroups of $W_\t$ of the form 
$Q_\t$ for $Q \in \PG_E$ (see Lemma~\ref{lem:p bijectif}). 
Then 
$$\ii_\t(\UC\!\UC^*_\t(\QG_E))=\VC\!\VC^*(P)^E/\Nrmov_W(P).$$
\end{prop}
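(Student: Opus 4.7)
The strategy is to imitate the proof of Corollary~\ref{coro:stratification} step by step, with $\VC\!\VC^*$ replacing $\VC$ throughout. First, I would invoke Proposition~\ref{prop:param-tau-split}(b) to choose $g \in E$ and $x \in W$ with $[x^{-1}\t(x)] = [g] = E$, and set $Q = \lexp{x}{P}$. By Proposition~\ref{prop:param-tau-split}(a), $Q$ is $\t$-split, $\PG_E$ is the $W_\t$-orbit of $Q$, and hence $\QG_E$ is the $W_\t$-orbit of $Q_\t$.

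The crucial intermediate step is to prove the ``doubled'' analogue of~\eqref{eq:v split}: for any $\t$-split parabolic subgroup $Q$,
$$\VC\!\VC^*_\t(Q_\t) = \VC\!\VC^*(Q)^\t.$$
Unfolding definitions, $(W_\t)_v = (W_v \cap W_{V^\t}^\setw)/W_{V^\t}^\ptw$ for any $v \in V^\t$, and likewise for $v^* \in V^{*\t}$ (using that $W_{V^\t}^\ptw$ acts trivially on $V^{*\t}$ via the identification $V^{*\t} \simeq (V^\t)^*$), so that
$$(W_\t)_v \cap (W_\t)_{v^*} = \bigl((W_v \cap W_{v^*}) \cap W_{V^\t}^\setw\bigr)/W_{V^\t}^\ptw.$$
Now $W_v$ is $\t$-split because $v \in V^\t$, and $W_{v^*}$ is $\t$-split by the dual of the same remark (stabilizers of points of $V^{*\t}$ are $\t$-split); hence by~\eqref{eq:inter split}, $W_v \cap W_{v^*}$ is itself $\t$-split, and the bijectivity in Lemma~\ref{lem:p bijectif} (applied to $(W_v \cap W_{v^*})_\t = Q_\t$) forces $W_v \cap W_{v^*} = Q$, giving the doubled~\eqref{eq:v split}.

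Once this is in hand, the construction of $\ii_\t$ combined with the isomorphism in Lemma~\ref{lem:quotient-z0}(b) yields that $\ii_\t(\UC\!\UC^*_\t(\QG_E))$ is the image of $\VC\!\VC^*(\lexp{x}{P})^\t$ in $\UC\!\UC^*(\PG) \simeq \VC\!\VC^*(P)/\Nrmov_W(P)$. Conjugation by $x^{-1}$ induces an isomorphism $\VC\!\VC^*(\lexp{x}{P})^\t \longiso \VC\!\VC^*(P)^{x^{-1}\t(x)}$; since $[x^{-1}\t(x)] = E$, the $\Nrmov_W(P)$-saturation of this subvariety is exactly $\VC\!\VC^*(P)^E$, and projecting to the quotient yields the desired equality.

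The main obstacle is the doubled version of~\eqref{eq:v split}, which relies on knowing that $W_v \cap W_{v^*}$ is parabolic (and $\t$-split when both $v, v^*$ are $\t$-fixed); everything else is a formal transcription of the argument for Corollary~\ref{coro:stratification}, and the parallelism with Lemma~\ref{lem:double-tau} and~\eqref{eq:uuptau} shows that all the set-theoretic bookkeeping carries over without modification.
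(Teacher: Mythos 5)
Your proposal is correct and takes essentially the same approach the paper indicates: the paper gives no explicit proof, saying only that Lemma~\ref{lem:double-tau} ``allows to apply \dots the same arguments as in \S\ref{sub:strat}'', and you carry out that transcription, supplying in particular the doubled analogue $\VC\!\VC^*_\t(Q_\t)=\VC\!\VC^*(Q)^\t$ of~\eqref{eq:v split}, proved correctly from~\eqref{eq:inter split} and the bijectivity in Lemma~\ref{lem:p bijectif}. The one cosmetic point is that ``$W_{V^\t}^\ptw$ acts trivially on $V^{*\t}$'' is best justified by noting $W_{V^\t}^\ptw=W_{V^{*\t}}^\ptw$ (the minimal $\t$-split parabolic subgroup, using the dual characterization of $\t$-splitness stated right after~\eqref{eq:inter split}) rather than by the bare identification $V^{*\t}\simeq(V^\t)^*$, but the fact itself is right and the argument goes through.
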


\bigskip

\begin{coro}\label{coro:uu-stratification}
The bijective morphism of varieties $\ii_\t : (V^\t \times V^{*\t})/W_\t \longto \ZC_0^\t$ 
induces a bijection between symplectic leaves. 
\end{coro}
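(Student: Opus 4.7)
The plan is to transport the explicit symplectic-leaf decomposition of the source across the normalization map $\ii_\t$. First, I would apply Proposition~\ref{prop:leaves-0} to the reflection pair $(V^\t,W_\t)$: this identifies the symplectic leaves of $(V^\t \times V^{*\t})/W_\t$ as the strata $\UC\!\UC^*_\t(\QG)$, with $\QG$ running over $\para(W_\t)/W_\t$. In particular, the source has finitely many leaves, and by Corollary~\ref{coro:finitude} the same is true for $\ZC_0^\t$.

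Next, combining Proposition~\ref{prop:uu-stratification} with the decomposition~\eqref{eq:uuptau}, the images $\ii_\t(\UC\!\UC^*_\t(\QG_E)) = \VC\!\VC^*(P)^E/\Nrmov_W(P)$ partition $\ZC_0^\t$ into finitely many irreducible, locally closed subsets, indexed by pairs $(\PG,E)$ with $\PG \in (\para(W)/W)_\split^\t$ and $E \in \ECt_P\t/\Nrmov_W(P)$. Because $\ii_\t$ is a finite bijective Poisson morphism (Proposition~\ref{prop:springer-double}), the closure of each such image is a closed Poisson subvariety of $\ZC_0^\t$ (being the image under a finite Poisson map of a closed Poisson subvariety of the source); hence any symplectic leaf of $\ZC_0^\t$ meeting one of the pieces is contained in it. To finish I would invoke the bijection~\eqref{eq:poisson prime general}: the defining ideal of each closure in $\CM[\ZC_0^\t]$ is Poisson and prime, so it corresponds to a unique symplectic leaf $\SC$ of $\ZC_0^\t$, namely the smooth locus of the closure, and a dimension count using $\dim \VC\!\VC^*(P)^E/\Nrmov_W(P) = 2\dim(V^P)^\t = 2\dim(V^\t)^{P_\t}$ (via \eqref{eq:vptau}) matches the dimension of $\UC\!\UC^*_\t(\QG_E)$, forcing $\SC$ to equal $\VC\!\VC^*(P)^E/\Nrmov_W(P)$.

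The main obstacle is this last identification: showing that each image is a \emph{full} symplectic leaf, not merely contained in one. A cleaner route, which I expect to be the efficient one, is to check that $\ii_\t$ restricted to each leaf $\UC\!\UC^*_\t(\QG_E)$ is an isomorphism of Poisson varieties onto its image. Bijectivity is already in hand from Proposition~\ref{prop:springer-double}; Poisson compatibility is built in; and smoothness of the target on this open stratum follows since it is the free quotient $\VC\!\VC^*(P)^E/\Nrmov_W(P)$ of a smooth variety by a free action of $(\Nrmov_W(P))_{w\t}$ for $w\t \in E$. Once this isomorphism is in place, the image sits in the smooth locus of its own closure in $\ZC_0^\t$ and is therefore identified with a symplectic leaf, yielding the asserted bijection $\symp((V^\t \times V^{*\t})/W_\t) \longiso \symp(\ZC_0^\t)$ given by $\UC\!\UC^*_\t(\QG_E) \longmapsto \ii_\t(\UC\!\UC^*_\t(\QG_E))$.
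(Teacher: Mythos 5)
Your proposal takes a genuinely different route from the paper's proof. The paper starts with an arbitrary symplectic leaf $\SC$ of $\ZC_0^\t$, locates the unique stratum $\UC\!\UC^*(\PG)^\t$ meeting $\SC$ densely, and shows that $\SC \cap \UC\!\UC^*(\PG)^\t$ is a connected component of the smooth symplectic variety $\UC\!\UC^*(\PG)^\t$, hence equal to some $\ii_\t(\UC\!\UC^*_\t(\QG_E))$ by Proposition~\ref{prop:uu-stratification}. You work in the opposite direction: start from the pieces $\ii_\t(\UC\!\UC^*_\t(\QG_E)) = \VC\!\VC^*(P)^E/\Nrmov_W(P)$ and try to prove directly that each one is a full symplectic leaf of $\ZC_0^\t$. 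Both routes draw on the same toolkit (Propositions~\ref{prop:springer-double}, \ref{prop:uu-stratification}, the Poisson-prime bijection \eqref{eq:poisson prime general}); your direction has the virtue of producing the bijection as an explicit formula, while the paper's direction makes surjectivity essentially automatic.

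There is, however, a real gap at the end of your argument, and you in fact identified its location yourself. You correctly establish that $\IC := \VC\!\VC^*(P)^E/\Nrmov_W(P)$ is smooth, irreducible, and open dense in $\overline{\IC}$, and that $\overline{\IC}$ is closed irreducible Poisson in $\ZC_0^\t$; from this you conclude that $\IC$ \emph{sits inside} the smooth locus of $\overline{\IC}$, which by \eqref{eq:poisson prime general} is the leaf $\SC$ attached to $\overline{\IC}$. But ``$\IC \subset \SC$'' is not yet ``$\IC = \SC$'': the leaf could a priori extend strictly beyond $\IC$ into the boundary. (Your first, dimension-count attempt fails for the same reason, as you note.) The same issue hides in the earlier assertion ``any symplectic leaf meeting one of the pieces is contained in it'' --- the fact that $\overline{\IC}$ is closed Poisson only gives containment in $\overline{\IC}$, not in $\IC$. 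The missing ingredient is to observe that the boundary $\overline{\IC}\setminus\IC$ is \emph{also} closed Poisson: it is $\ii_\t(\overline{\UC\!\UC^*_\t(\QG_E)}\setminus\UC\!\UC^*_\t(\QG_E))$, and the source set is closed Poisson (a union of closures of lower strata in $(V^\t\times V^{*\t})/W_\t$), while $\ii_\t$ is finite and Poisson, so images of closed Poisson subsets are closed Poisson. Once this is in hand the argument from the proof of Proposition~\ref{prop:leaves-lisse} applies verbatim: the leaf $\SC$ is dense in $\overline{\IC}$ so it cannot be contained in the proper closed Poisson subset $\overline{\IC}\setminus\IC$, hence $\SC$ is disjoint from the boundary and $\SC = \IC$. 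With this one extra observation your ``cleaner route'' closes and gives a correct alternative proof.
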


\bigskip

\begin{proof}
% Let $A$ denote the Poisson algebra $\CM[V^\t \times V^{*\t}]^{W_\t}$. 
Both varieties admits finitely many symplectic leaves so, by taking the closure, 
these leaves are, in both cases, in bijection with the set of irreducible closed Poisson 
subvarieties. 

Now, let $\SC$ be an irreducible closed Poisson subvariety of $(V^\t \times V^{*\t})/W_\t$. 
Since $\ii_\t$ respects the Poisson bracket, $\ii_\t(\SC)$ is also an irreducible closed 
Poisson subvariety of $\ZC_0^\t$: this shows that $\ii_\t$ induces an injective map 
between the symplectic leaves of $(V^\t \times V^{*\t})/W_\t$ and those of $\ZC_0^\t$. 

Let us now show that this map is surjective. For this, let $\SC$ be a symplectic leaf 
of $\ZC_0^\t$. Then there exists $\PG \in \para(W)/W$ such that 
$\SC \cap \UC\!\UC^*(\PG)^\t$ is open and dense in $\SC$. So $\PG$ contains a $\t$-split parabolic 
subgroup $P$ and the decomposition of $\UC\!\UC^*(\PG)^\t \neq \vide$ into 
irreducible components is given by~\eqref{eq:uuptau}. But $\UC\!\UC^*(\PG)$ is 
smooth and symplectic, so $\UC\!\UC^*(\PG)^\t$ is also smooth and symplectic, 
so $\SC \cap \UC\!\UC^*(\PG)^\t$ is equal to one of these irreducible 
components. The result then follows from Proposition~\ref{prop:uu-stratification}.
\end{proof}

\bigskip

\begin{prop}\label{prop:k0}
If $k=0$, then Theorem~A and Conjecture~B hold.
\end{prop}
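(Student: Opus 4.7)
The plan is to reduce Theorem~A and Conjecture~B for $k=0$ to Proposition~\ref{prop:0} applied to the pair $(V^\t, W_\t)$, using the normalization morphism $\ii_\t$ of Proposition~\ref{prop:springer-double}. Corollary~\ref{coro:uu-stratification} shows that $\ii_\t$ induces a bijection between the symplectic leaves of $\ZC_0(V^\t, W_\t)=(V^\t \times V^{*\t})/W_\t$ and those of $\ZC_0^\t$. By Proposition~\ref{prop:0} applied to $(V^\t, W_\t)$, the former are parametrized by $\para(W_\t)/W_\t$ via $\QG \mapsto \UC\!\UC^*_\t(\QG)$, and Lemma~\ref{lem:p bijectif} identifies $\para(W_\t)/W_\t$ with $\para_\t(W)/W_\t$ via $P \mapsto P_\t$. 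Combined with~\eqref{eq:vptau}, this produces a candidate bijection $\para_\t(W)/W_\t \longiso \symp(\ZC_0^\t)$ sending $[P] \mapsto \SC_{P,0} := \ii_\t(\UC\!\UC^*_\t(\QG_P))$, with the correct dimension $2\dim(V^P)^\t$.

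To match this parametrization with Theorem~A, the key step is to show that for every $\t$-split parabolic $P$, the variety $\ZC_0(V_P, P)^\t$ admits a unique $\t$-cuspidal point, namely the class of $0 \in V_P \times V_P^*$, so that forgetting the cuspidal point yields a bijection $\Cus_0^\t(V,W)/W_\t \longiso \para_\t(W)/W_\t$. Applying the above reduction to $(V_P, P, \t)$, this amounts to showing that the only $\t$-split parabolic subgroup $Q \subseteq P$ with $\dim(V_P^Q)^\t = 0$ is $Q=P$ itself. A short computation using $V^P \subset V^Q$ and the decomposition $V=V^P \oplus V_P$ shows that $\dim(V_P^Q)^\t = 0$ forces $(V^Q)^\t = (V^P)^\t$; by~\eqref{eq:vptau} this means $(V^\t)^{Q_\t} = (V^\t)^{P_\t}$, and since parabolic subgroups of a complex reflection group are determined by their fixed spaces, $Q_\t = P_\t$, whence $Q=P$ by Lemma~\ref{lem:p bijectif}. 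A preliminary check, using $W_{V^\t}^\ptw \subset P$ and the $\t$-equivariant decomposition $V^\t = (V^P)^\t \oplus V_P^\t$, is needed to verify that the analogue of $W_\t$ for the triple $(V_P, P, \t)$ coincides with $P_\t$, so that the recursive step takes the expected shape.

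For Conjecture~B, Corollary~\ref{coro:feuille-0-normalisation} applied to $(V^\t, W_\t)$ yields a $\CM^\times$-equivariant isomorphism of Poisson varieties
$$\bigl((V^\t)^{P_\t} \times (V^{*\t})^{P_\t}\bigr) / \Nrmov_{W_\t}(P_\t) \longiso \overline{\UC\!\UC^*_\t(\QG_P)}^\nor.$$
Since $\ii_\t$ is finite bijective and restricts to a birational map on each stratum (Proposition~\ref{prop:uu-stratification}), composing with $\ii_\t$ identifies the left-hand side with $\overline{\SC}_{P,0}^\nor$; using~\eqref{eq:vptau} together with the analogous identification of duals $(V^{*\t})^{P_\t} \cong ((V^P)^\t)^*$, the left-hand side is exactly $\ZC_0((V^P)^\t, \Nrmov_{W_\t}(P_\t))$, so Conjecture~B holds with $l=0$. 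The $\CM^\times$-equivariance and Poisson compatibility are tracked throughout from the intrinsic grading and bracket on $\CM[V \times V^*]$.

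The main obstacle is the uniqueness of the $\t$-cuspidal point of $\ZC_0(V_P, P)^\t$ for each $P \in \para_\t(W)$, as it requires carefully unraveling Lehrer-Springer theory inside $V_P$ and comparing parabolic subgroups arising from the inner pair $(V_P, P, \t)$ with those of $W_\t$; the remainder is a formal assembly of Sections~\ref{sec:0} and~\ref{sec:0-tau}.
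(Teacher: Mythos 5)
Your proof is correct and follows essentially the same route as the paper's: reduce to the pair $(V^\t,W_\t)$ via Corollary~\ref{coro:uu-stratification}, identify the parametrizing sets through Lemma~\ref{lem:p bijectif} and~\eqref{eq:vptau}, and invoke Corollary~\ref{coro:feuille-0-normalisation} for the normalization statement. You make explicit two points the paper elides in one line — that the Lehrer–Springer quotient for $(V_P,P,\t)$ is $P_\t$, and that $0$ is the unique $\t$-cuspidal point of $\ZC_0(V_P,P)^\t$ — but the argument is the same in substance.
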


\bigskip

\begin{proof}
By Corollary~\ref{coro:uu-stratification}, $\ZC_0^\t$ admits a $\t$-cuspidal point 
if and only if $(V^\t)^{W_\t}=0$ and, in this case, there is only one $\t$-cuspidal 
point, namely the orbit of $0$. So, still by Corollary~\ref{coro:uu-stratification} 
(and~\eqref{eq:vptau}), $\Cus_0^\t(V,W)$ is in bijection with conjugacy 
classes of parabolic subgroups of $W_\t$ and $\symp(\ZC_0^\t)$ is also 
in bijection with conjugacy classes of parabolic subgroups of $W_\t$. 
This provides a natural bijection between $\Cus_0^\t(V,W)$ and 
$\symp(\ZC_0^\t)$, which satisfies the required properties of Theorem~A.

\medskip

Let us now prove Conjecture~B in this case. So let $\SC$ be a symplectic 
leaf of $\ZC_0^\t$. Let $\LC=\ii_\t^{-1}(\SC)$: it is a symplectic 
leaf of $(V^\t \times V^{*\t})^{W_\t}$ and 
$$\MCov=\ii_\t^{-1}(\SCov).$$
Since $\ii_\t$ is bijective, we have $\LCov^\nor=\SCov^\nor$, 
so Conjecture~B now follows from Lemma~\ref{lem:quotient-z0} 
applied to the pair $(V^\t,W_\t)$ instead of $(V,W)$.
\end{proof}

\bigskip

\begin{rema}\label{param-0}
If $k=0$, then the parameter $l$ involved in Conjecture~B is equal to $0$.\finl
\end{rema}

\bigskip

\section{Parabolic subgroups attached to symplectic leaves}\label{sec:one-parameter}

\medskip

\subsection{D\'efinition}
Let $\SC$ be a symplectic leaf of $\ZC_k^\t$. We denote by $\pG_\SC$ the prime ideal 
of $\Zb_k$ defining $\SCov$: then $\pG_\SC^\t \in \pspec(\Zb_k^\t)$. Now, 
the isomorphism $\gr_\FC \Zb_k \simeq \Zb_0$ is $\t$-equivariant and 
$(\gr_\FC \Zb_k)^\t=\gr_\FC(\Zb_k^\t)$. So $\gr_\FC(\pG_\SC^\t)$ is an 
ideal of $\Zb_0^\t$. The next important result follows mainly from~\cite[Theo.~2.8]{martino}.

\bigskip

\begin{lem}\label{lem:grfp}
The ideal $\sqrt{\gr_\FC(\pG_\SC^\t)}$ of $\Zb_0^\t$ is prime, Poisson and contains $I_0^\t$.
\end{lem}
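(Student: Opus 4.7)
The plan is to reduce the statement to an application of Martino's theorem~\cite[Theo.~2.8]{martino} to the filtered Poisson algebra $\Zb_k^\t$ and the ideal $\pG_\SC^\t$. For this I first identify $\pG_\SC^\t$ as a prime Poisson ideal of $\Zb_k^\t$: by the definition of symplectic leaf and the bijection~\eqref{eq:poisson prime}, the image of $\pG_\SC$ in $\Zb_k/I_k=\Zb_k^\t/I_k^\t$ is a Poisson prime (the identification of quotients, and the Poisson structure on it, come from~\eqref{eq:ideal-fixe}, which makes $I_k^\t$ a Poisson ideal of $\Zb_k^\t$), and lifting back yields the desired property of $\pG_\SC^\t$, which moreover contains $I_k^\t$.

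\medskip

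The next step is to identify $\gr_\FC(\Zb_k^\t)$ with $\Zb_0^\t$ and to apply Martino's theorem. Since $\t$ has finite order and $\CM$ has characteristic zero, the averaging operator $e_\t=\frac{1}{|\langle\t\rangle|}\sum_i\t^i$ gives, for any $\t$-stable subspace $M\subset\Zb_k$ equipped with the induced filtration, a canonical identification
\[
\gr_\FC(M^\t)\longiso(\gr_\FC M)^\t,
\]
the point being that $\FC_j(M^\t)=(\FC_j M)^\t$ and that $e_\t$ splits each quotient $\FC_j M/\FC_{j-1}M$ equivariantly. Applied to $M=\Zb_k$ this produces $\gr_\FC(\Zb_k^\t)\simeq\Zb_0^\t$. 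The Poisson bracket on $\Zb_k^\t$ inherits the filtered-lowering behaviour required by Martino from the bracket on $\Zb_k$, so his theorem applies and yields that $\sqrt{\gr_\FC(\pG_\SC^\t)}$ is a prime Poisson ideal of $\Zb_0^\t$.

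\medskip

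Finally, to verify $I_0^\t\subset\sqrt{\gr_\FC(\pG_\SC^\t)}$, I apply the averaging identification to $M=I_k$, obtaining $\gr_\FC(I_k^\t)=(\gr_\FC I_k)^\t$. Since $I_k^\t\subset\pG_\SC^\t$ implies $\gr_\FC(I_k^\t)\subset\gr_\FC(\pG_\SC^\t)$, it then suffices to show $I_0\subset\gr_\FC(I_k)$. This is immediate: $\gr_\FC(I_k)$ is an ideal of $\gr_\FC\Zb_k=\Zb_0$, and each generator $\t(\zba)-\zba$ of $I_0$ (with $\zba\in\Zb_0$ homogeneous of degree $j$) lifts to $\t(z)-z\in I_k\cap\FC_j\Zb_k$ for any lift $z\in\FC_j\Zb_k$ of $\zba$, whose leading term in $\gr_\FC(I_k)$ is exactly $\t(\zba)-\zba$. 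The main obstacle in the plan is ensuring that Martino's theorem---originally formulated for $\Zb_k$---transfers cleanly to the $\t$-fixed subalgebra $\Zb_k^\t$; since his hypotheses rest only on the filtered Poisson structure and the lowering property of the bracket, both inherited by $\Zb_k^\t$, this ought to be routine but must be checked carefully.
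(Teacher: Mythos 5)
Your proposal is correct and takes essentially the same route as the paper: both reduce the lemma to an application of \cite[Theo.~2.8]{martino} by noting that the proto-Poisson bracket hypotheses are inherited by $\Zb_k^\t$ via the averaging identification $(\gr_\FC\Zb_k)^\t\simeq\gr_\FC(\Zb_k^\t)$. The only small variation is in the last containment, where you lift homogeneous generators of $I_0$ and track symbols in $\gr_\FC(I_k)$, whereas the paper observes directly that $\t$ acts trivially on $\gr_\FC(\Zb_k/\pG_\SC)=\Zb_0/\gr_\FC(\pG_\SC)$ and hence $\gr_\FC(\pG_\SC)\supset I_0$ --- an equivalent argument.
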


\bigskip

\begin{proof}
First, the Poisson bracket $\{,\}$ on $\Zb_k$ is a {\it proto-Poisson bracket} 
of degree $-2$ in the sense of~\cite[Def.~2.4]{martino} and its 
associated graded Poisson bracket on $\Zb_0$ is also the natural 
Poisson bracket on $\Zb_0$ (for a proof of both facts, see~\cite[Lem~.2.26]{EG}). 

The same facts also holds by taking fixed points under the $\t$-action 
and so, the fact that $\sqrt{\gr_\FC(\pG_\SC^\t)}$ is a prime ideal of $\Zb_0^\t$ 
which is Poisson is an application of~\cite[Theo.~2.8]{martino}. 

Finally, $\t$ acts trivially on $\Zb_k/\pG_\SC$, so it acts trivially 
on $\gr_\FC(\Zb_k/\pG_\SC)=\gr_\FC(\Zb_k)/\gr_\FC(\pG_\SC)=\Zb_0/\gr_\FC(\pG_\SC)$. 
This shows that $\gr_\FC(\pG_\SC)$ contains $I_0$ and so $\gr_\FC(\pG_\SC^\t)$ 
contains $I_0^\t$.
\end{proof}

\bigskip

Lemma~\ref{lem:grfp} shows that $\sqrt{\gr_\FC(\pG_\SC^\t)}$ defines a symplectic 
leaf $\SC_0$ of $\ZC_0^\t$ so, by Corollary~\ref{coro:uu-stratification}, there exists 
a unique $W_\t$-orbit $\QG_\SC$ of parabolic subgroups of $W_\t$ such 
that $\sqrt{\gr_\FC(\pG_\SC^\t)}$ is the defining ideal of $\ii_\t(\UC\!\UC_\t^*(\QG_\SC))$. 
Through the bijection of Lemma~\ref{lem:p bijectif}, there exists a unique 
$W_\t$-orbit $\PG_\SC$ of $\t$-split parabolic subgroups of $W$ such that 
$\QG_\SC=\{P_\t~|~P \in \PG_\SC\}$. 

\bigskip

\begin{defi}\label{defi:parabolique}
Let $\SC$ be a symplectic leaf of $\ZC_k^\t$. The $W_\t$-orbit of $\t$-split parabolic subgroups 
$\PG_\SC$ is called the $W_\t$-orbit {\bfit associated with $\SC$}. Any element of $\PG_\SC$ 
is called an {\bfit associated $\t$-split parabolic subgroup} (with $\SC$). 
\end{defi}

\bigskip

If $P$ is a $\t$-split parabolic subgroup of $W$ associated with $\SC$, then 
\equat\label{eq:dim feuille}
\dim \SC = 2 \dim(V^P)^\t.
\endequat
Indeed, $\dim \SC = \dim \SC_0$.

\bigskip

\subsection{Geometric construction} 
Let $\pi : V/W \times V^*/W \to V/W$ and $\pi^\ve : V/W \times V^*/W \longto V^*/W$ 
denote the first and second projection respectively. 
The next proposition gives another characterization of the $W_\t$-orbit of 
$\t$-split parabolic subgroups associated with a symplectic leaf:

\bigskip

\begin{prop}\label{prop:image upsilon}
Let $\SC$ be a symplectic leaf of $\ZC_0^\t$. Then:
\begin{itemize}
\itemth{a} $\Upsilon_k(\overline{\SC})=\iota_\t(\overline{\UC_\t(\QG_\SC)}) \times \iota_\t^\ve(\overline{\UC_\t^*(\QG_\SC)})$.

\itemth{b} $\pi(\Upsilon_k(\overline{\SC}))=\iota_\t(\overline{\UC_\t(\QG_\SC)})$.

\itemth{c} $\pi^\ve(\Upsilon_k(\overline{\SC}))=\iota_\t^\ve(\overline{\UC_\t^*(\QG_\SC)})$.
\end{itemize}
\end{prop}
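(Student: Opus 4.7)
The plan is to prove~(a), since~(b) and~(c) follow by applying $\pi$ and $\pi^\ve$ to both sides.

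\textbf{Step 1 (identify the right-hand side).} By Definition~\ref{defi:parabolique} and Corollary~\ref{coro:uu-stratification}, the symplectic leaf $\SC_0$ of $\ZC_0^\t$ associated with $\SC$ satisfies $\overline{\SC_0} = \ii_\t(\overline{\UC\!\UC_\t^*(\QG_\SC)})$; for $Q \in \QG_\SC$, this closure is the image in $\ZC_0^\t$ of $(V^\t)^Q \times (V^{*\t})^Q$. Composing with $\Upsilon_0 : \ZC_0 \to \PC$ sends this subvariety to the product $\ii_\t(\overline{\UC_\t(\QG_\SC)}) \times \ii_\t^\ve(\overline{\UC_\t^*(\QG_\SC)})$, since the natural map $(V^\t)^Q \times (V^{*\t})^Q \hookrightarrow V \times V^* \to V/W \times V^*/W$ factors through the two coordinate projections and each factor equals the corresponding Lehrer--Springer image by the computation of Proposition~\ref{prop:uu-stratification}. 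Hence the RHS of~(a) equals $\Upsilon_0(\overline{\SC_0})$, and it remains to prove $\Upsilon_k(\overline{\SC}) = \Upsilon_0(\overline{\SC_0})$.

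\textbf{Step 2 (Rees deformation).} Consider the Rees algebra $\rees_\FC \Zb_k$ over $\CM[\hbar]$, with specializations $\Zb_k$ at $\hbar=1$ and $\Zb_0$ at $\hbar=0$. Since $\Pb$ is already graded, $\rees_\FC \Pb \simeq \Pb[\hbar]$, and the inclusion $\Pb \hookrightarrow \Zb_k$ lifts to a finite free extension of rank $|W|$, inducing a finite morphism $\rho : \Spec(\rees_\FC \Zb_k) \to \AM^1 \times \PC$ that restricts to $\Upsilon_k$ at $\hbar=1$ and to $\Upsilon_0$ at $\hbar=0$. The Rees ideal $\rees_\FC \pG_\SC$ defines a $\CM[\hbar]$-flat closed subscheme, and its image $\YC = \rho(V(\rees_\FC \pG_\SC))$ is a closed irreducible subvariety of $\AM^1 \times \PC$ of dimension $\dim \SC + 1$, equivariant for the Rees $\CM^\times$-action (with $\hbar$ in weight~$1$), with fibers $\YC_1 = \Upsilon_k(\overline{\SC})$ at $\hbar=1$ and $\YC_0 = \Upsilon_0(\overline{\SC_0})$ at $\hbar=0$ (the latter because $\rho$ is finite and $V(\gr_\FC \pG_\SC)$ has underlying set $\overline{\SC_0}$).

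\textbf{Step 3 (conclude).} The special fiber $\YC_0 = \Upsilon_0(\overline{\SC_0})$ is a product of cones in $V/W$ and $V^*/W$, so it is stable under the Rees $\CM^\times$-action on $\PC$. Consequently $\AM^1 \times \Upsilon_0(\overline{\SC_0})$ is a Rees-equivariant irreducible closed subvariety of $\AM^1 \times \PC$ of dimension $\dim \SC + 1$ sharing the special fiber $\YC_0$ with $\YC$ at the attracting point $\hbar=0$. By irreducibility and equality of dimensions, this identifies $\YC = \AM^1 \times \Upsilon_0(\overline{\SC_0})$, and specializing at $\hbar=1$ yields $\Upsilon_k(\overline{\SC}) = \Upsilon_0(\overline{\SC_0})$. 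The hard point is this last identification: concretely, one must show that the defining ideal $\rees_\FC(\pG_\SC \cap \Pb)$ of $\YC$ in $\Pb[\hbar]$ and the extension of $\gr_\FC \pG_\SC \cap \Pb$ cut out the same subvariety of $\AM^1 \times \PC$. Since $\gr_\FC(\pG_\SC \cap \Pb) \subseteq \gr_\FC \pG_\SC \cap \Pb$ with possibly strict inclusion, this equality is not automatic; the cleanest remedy is to establish the inclusion $\Upsilon_k(\overline{\SC}) \subseteq \ii_\t(\overline{\UC_\t(\QG_\SC)}) \times \ii_\t^\ve(\overline{\UC_\t^*(\QG_\SC)})$ via a point-set argument using the Bezrukavnikov--Etingof restriction functors at generic points of $\SC$ (as in Bellamy's treatment of the case $\t=1$, \cite{bellamy cuspidal}), and then conclude by equality of dimensions using~\eqref{eq:dim feuille}.
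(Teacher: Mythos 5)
Your Rees-algebra setup in Steps~1--2 parallels the paper's (the paper's $\Hb_k^\#$ is, via $\th$, exactly $\rees_\FC\Hb_k$), but Step~3 contains the genuine gap that you yourself flag. Irreducibility, equal dimension, Rees $\CM^\times$-equivariance, and a shared special fibre over $\hbar=0$ do \emph{not} force $\YC=\AM^1\times\Upsilon_0(\overline{\SC}_0)$: for instance $\{y=\hbar x\}$ and $\{y=0\}$ in $\AM^1\times\AM^2$ are both irreducible, two-dimensional, equivariant when $\hbar,y$ have weight $1$ and $x$ has weight $0$, and share the fibre $\{y=0\}$ over $\hbar=0$, yet differ. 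The root cause is that the Rees torus $(1,\xi')$ scales \emph{both} $\CM[V]^W$ and $\CM[V^*]^W$ nontrivially, so it gives no control over either projection of $\Upsilon_k^\#$ away from $\hbar=0$. The Bezrukavnikov--Etingof remedy you sketch at the end would be a different argument, and you have not carried it out, so the proof as written is incomplete.

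The ingredient the paper uses, and that your proof lacks, is the \emph{two-parameter} torus $\CM^\times\times\CM^\times$ acting on $\Hb_k^\#\simeq\rees_\FC\Hb_k$, combining the grading of \S\ref{subsub:action} with the $\hbar$-scaling. The diagonal subtorus $\D(\CM^\times)=\{(\xi,\xi)\}$ simultaneously has non-negative weights on $\Hb_k^\#$, so that for $z\in\overline{\SC}$ the limit $z_0=\lim_{\xi\to 0}(\xi,\xi)\cdot z$ exists and lies in $\overline{\SC}_0$, \emph{and} it acts trivially on one of $V$, $V^*$, hence fixes the corresponding composition $\pi\circ\Upsilon_k^\#$ or $\pi^\ve\circ\Upsilon_k^\#$. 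That invariance transports the containment from the $\hbar=0$ fibre directly back to $\hbar=1$, yielding $\pi(\Upsilon_k(\overline{\SC}))\subset\iota_\t(\overline{\UC_\t(\QG_\SC)})$; the symmetric subtorus, obtained by exchanging the roles of $V$ and $V^*$, gives $\pi^\ve(\Upsilon_k(\overline{\SC}))\subset\iota_\t^\ve(\overline{\UC_\t^*(\QG_\SC)})$; and the dimension count~\eqref{eq:dim feuille} together with the finiteness of $\Upsilon_k$ forces equality in~(a), after which (b), (c) follow by projecting as you say. Without a subtorus that fixes one of the two projections, the pure $\CM^\times$-limit argument underdetermines $\Upsilon_k(\overline{\SC})$, which is precisely the gap you ran into.
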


\bigskip

\begin{proof}
Let $\Hb_k^\#$ denote the $\CM[\hbar]$-algebra obtained as the 
quotient of the algebra $\CM[\hbar] \otimes (\Trm(V \oplus V^*) \rtimes W)$ 
by the relations 
\equat\label{eq:rels-lambda}
\begin{cases}
[x,x']=[y,y']=0,\\
[y,x]=\hbar^2 \DS{\sum_{H\in\mathcal{A}} \sum_{j=0}^{e_H-1}
e_H(k_{H,i}-k_{H,i+1}) 
\frac{\langle y,\a_H \rangle \cdot \langle \a_H^\ve,x\rangle}{\langle \a_H^\ve,\a_H\rangle} \e_{H,i}},
\end{cases}
\endequat
for all $y$, $y' \in V$ and $x$, $x' \in V^*$. It follows from the comparison of 
the relations~\eqref{eq:rels} and~\eqref{eq:rels-lambda} that there is a well-defined 
morphism of $\CM[\hbar]$-algebras $\th : \Hb_k^\# \longto \rees_\FC \Hb_k$ such that
$$\th(y)=\hbar y,\qquad \th(x)=\hbar x\qquad \text{and}\qquad 
\th(w)=w$$
for all $y \in V$, $x \in V^*$ and $w \in W$. In fact,
\equat\label{eq:iso-rees}
\text{\it $\th$ is an isomorphism of algebras.}
\endequat
Indeed, the surjectivity is immediate while the injectivity follows from 
the PBW decomposition~\eqref{eq:pbw}, which also holds for $\Hb_k^\#$, 
namely, the map 
$$\CM[\hbar] \otimes \CM[V] \otimes \CM W  \otimes \CM[V^*] \longto \Hb_k^\#$$
induced by the multiplication is an isomorphism of $\CM[\hbar]$-modules. 
% The algebra $\Hb_k^\#$ comes equipped with the following structures:
% 
% \medskip
% 
% \begin{quotation}
% \begin{itemize}
% \itemth{A1} An action of $\CM^\times \times \CM^\times$ given by 
% $$
% \begin{cases}
% (\xi,\xi') \cdot x = \xi\xi' x & \text{if $x \in V^*$,}\\
% (\xi,\xi') \cdot y = \x^{-1}\xi' y & \text{if $y \in V$,}\\
% (\xi,\xi') \cdot w = w & \text{if $w \in W$,}\\
% (\xi,\xi') \cdot \hbar = \xi'\hbar.
% \end{cases}$$
% The action of the first copy of $\CM^\times$ extends the one which has been already 
% defined in~\S\ref{subsub:action}.
% 
% \medskip
% 
% \itemth{A2} An action of $\t$ (which acts trivially on $\hbar$ and naturally on 
% $\Trm(V \oplus V^*) \rtimes W$): it commutes with the action of $\CM^\times \times \CM^\times$. 
% 
% \medskip
% 
% \itemth{S} Specialization at $\l \in \CM$: we have a canonical isomorphism of algebras 
% $\CM[\hbar]/\langle \hbar -\l \rangle \simeq \Hb_{\l^2 k}$.
% \end{itemize}
% \end{quotation}
% 
% \medskip

Let $\Zb_k^\#$ denote the centre of $\Hb_k^\#$. Then it follows from~\eqref{eq:iso-rees} 
that $\th$ induces an isomorphism of algebras 
\equat\label{eq:iso-rees-z}
\Zb_k^\# \longto \rees_\FC \Zb_k.
\endequat
Again, $\Zb_k^\#$ is a flat family of deformations 
of $\Zb_0=\CM[V \times V^*]^W$. We denote by $\ZC_k^\#$ the affine variety such that 
$\CM[\ZC_k^\#]=\Zb_k^\#$. The inclusion $\Pb \injto \Zb_k^\#$ induces a morphism 
$\Upsilon_k^\# : \ZC_k^\# \longto \PC=V/W \times V^*/W$. 

The action of $\t$ and $\CM^\times$ extends easily to $\Hb_k^\#$, by letting 
them act trivially on the indeterminate $\hbar$. However, $\Hb_k^\#$ (and so $\Zb_k^\#$) 
inherits an extra-action of $\CM^\times$. Namely, 
there is an action of $\CM^\times \times \CM^\times$ given by 
$$
\begin{cases}
(\xi,\xi') \cdot x = \xi\xi' x & \text{if $x \in V^*$,}\\
(\xi,\xi') \cdot y = \x^{-1}\xi' y & \text{if $y \in V$,}\\
(\xi,\xi') \cdot w = w & \text{if $w \in W$,}\\
(\xi,\xi') \cdot \hbar = \xi'\hbar.
\end{cases}$$
The action of the first copy of $\CM^\times$ extends the one which has been already 
defined in~\S\ref{subsub:action}. Through the isomorphism $\th$, 
this action on $\rees_\FC \Hb_k$ is just the restriction 
of the action on $\CM[\hbar] \otimes \Hb_k$ given by
$$(\xi,\xi') \cdot (P(\hbar) \otimes h)=P(\xi'\hbar) \otimes (\xi \cdot h).$$
Then~\eqref{eq:rees} can be retrieved thanks to the isomorphism $\th$, the 
specialization process~(S) and~\eqref{eq:rees-facile}. 

Specializing $\hbar$ to $\l \in \CM$ gives the algebras $\Hb_{\l^2k}$ 
and $\Zb_{\l^2 k}$. Geometrically, the inclusion $\CM[\hbar] \injto \Zb_k^\#$ 
induces a flat morphism $\ZC_k^\# \to \CM$ whose fiber at $\l$ is the Calogero-Moser 
space $\ZC_{\l^2 k}$. 

Now, view $\SC$ as a subvariety of $\ZC_k^\#$ and 
let $\SCov_0=\overline{(1 \times \CM^\times)\cdot \SC} \cap \ZC_0$, 
endowed with its reduced structure. Then, using the isomorphism $\th$, 
it follows from the definition of the action of the second copy of $\CM^\times$ 
on $\rees_\FC(\Hb_k)$ that the defining ideal of $\overline{\SC}_0$ is 
$\sqrt{\gr_\FC(\pG_\SC)}$. 

Since $\SC$ is $(\CM^\times \times 1)$-stable, we have 
$$(1 \times \CM^\times)\cdot \SC=(\CM^\times \times \CM^\times)\cdot \SC = \D(\CM^\times) \cdot \SC,$$
where $\D(\CM^\times)$ is the diagonal in $\CM^\times \times \CM^\times$. 
Note also that, if $\xi \in \CM^\times$ 
and $z \in \ZC_k^\#$, then $\pi \circ \Upsilon_k^\#((\xi,\xi) \cdot z)=\pi \circ \Upsilon_k^\#(z)$. 
Moreover, if $z \in \overline{\SC}$, then $z_0=\lim_{\xi \to 0} (\xi,\xi) \cdot z$ exists 
(because the action $\D(\CM^\times)$ has non-negative weights) and $z_0 \in \overline{\SC}_0$. 
So $(\pi \circ \Upsilon_k)(z)=(\pi \circ \Upsilon_0)(z_0)$ belongs to 
$\iota_\t(\overline{\UC_\t(\QG_\SC)})$, as expected. This shows that
$$\pi(\Upsilon_k(\overline{\SC})) \subset \iota_\t(\overline{\UC_\t(\QG_\SC)}).$$
By exchanging the role of $V$ and $V^*$, we have 
$$\pi^\ve(\Upsilon_k(\overline{\SC})) \subset \iota_\t^\ve(\overline{\UC_\t^*(\QG_\SC)}).$$
Therefore, 
$$\Upsilon_k(\overline{\SC}) \subset \iota_\t(\overline{\UC_\t(\QG_\SC)}) 
\times \iota_\t^\ve(\overline{\UC_\t^*(\QG_\SC)}).$$
Since $\Upsilon_k(\SCov)$ is closed irreducible 
of dimension $2 \dim(V^P)^\t$ (by~\eqref{eq:dim feuille} and 
the finiteness of the morphism $\Upsilon_k$), we get that 
$$\Upsilon_k(\overline{\SC}) = \iota_\t(\overline{\UC_\t(\QG_\SC)}) 
\times \iota_\t^\ve(\overline{\UC_\t^*(\QG_\SC)}).$$
In other words, this proves~(a). Now, (b) and~(c) follow from~(a).
\end{proof}

\bigskip

Keep the notation introduced in the above proof ($\Hb_k^\#$, $\Zb_k^\#$, $\ZC_k^\#$,\dots) 
and let us explain how this proof provides a justification of Conjecture~B as well as a possible 
strategy for proving it. Indeed, if $\SC$ is a symplectic leaf of $\ZC_k$, let 
$\SCov^\#=\overline{(1 \times \CM^\times)\cdot \SC}$. Then $\SCov^\#$ comes equipped 
with a morphism $\varpi : \SCov^\# \longto \CM$ and we denote by $\nu : (\SCov^\#)^\nor \longto \CM$ 
the composition of the normalization morphism $(\SCov^\#)^\nor \longto \SCov^\#$ with $\varpi$. 
Then $\nu$ is flat~\cite[Chap.~III,~Prop.~9.7]{hartshorne}. 
Since $\varpi^{-1}(\CM^\times) \simeq \CM^\times \times \SCov$, we have
$$\nu^{-1}(\CM^\times) \simeq \CM^\times \times \SCov^\nor.$$
Let $\SCov_0^\bigstar$ denote the scheme-theoretic 
fiber of $\nu$ at $0$. Assume that we are able to show the following two facts:
\begin{itemize}
\itemth{1} The reduced subscheme of $\SCov_0^\bigstar$ is the normalization of $\SCov_0$.

\itemth{2} The scheme $\SCov_0^\bigstar$ is generically reduced.
\end{itemize}
Then a Theorem of Hironaka~\cite[Chap.~III,~Theo.~9.11]{hartshorne} 
would show that $\nu$ is a flat family of schemes, all of whose scheme-theoretic fibers are 
reduced, irreducible and normal varieties. As $\SCov_0^\bigstar=\SCov_0^\nor$ is the normalization 
of $\SCov_0$ by~(1), this would imply that $\SCov^\nor$ is a Poisson deformation 
of $\SCov_0^\nor$. So Conjecture~B would then follow from 
Propositions~\ref{prop:uu-stratification},~\ref{prop:k0} and a result of 
Bellamy~\cite[Theo.~1.4]{bellamy counting} 
(which follows works of Ginzburg-Kaledin~\cite{GK} and Namikawa~\cite{namikawa 1},~\cite{namikawa 2}).

% If $J$ is an ideal of $\Zb_k$, we denote by $J^\#$ the inverse image of $\rees_\FC(J)$ 
% in $\Zb_k^\#$ through the isomorphism~\eqref{eq:iso-rees-z} and we set 
% $J_0=(J^\# + \langle \hbar \rangle)/\langle \hbar \rangle$: it is an ideal of 
% $\Zb_0$. We have 
% \equat\label{eq:grad}
% J_0 = \grad_\FC(J)
% \endequat
% (through~\eqref{eq:rees-facile},~\eqref{eq:iso-rees-z} and~\eqref{eq:filtration-h}).
% Note the following useful results:
% 
% \bigskip
% 
% \begin{lem}\label{lem:crucial}
% Let $I$ and $J$ be two ideals of $\Zb_k$. Then:
% \begin{itemize}
% \itemth{a} The algebra $\Zb_k^\#/J^\#$ is a flat $\CM[\hbar]$-algebra. 
% 
% \itemth{b} If $\{\Zb_k,J\} \subset I$, then $\{\Zb_0,J_0\} \subset I_0$.
% \end{itemize}
% \end{lem}
% 
% \bigskip
% 
% \begin{proof}
% (a) Through the isomorphism $\th$, we have by definition 
% $$\Zb_k^\#/J^\# \simeq \rees_\FC(\Zb_k)/\rees_\FC(J).$$
% But $\rees_\FC(\Zb_k)/\rees_\FC(J) \simeq \rees_\FC(\Zb_k/J)$, for the 
% filtration on $\Zb_k/J$ induced by the one on $\Zb_k$ (i.e. 
% $\FC_j(\Zb_k/J)=(J+\FC_j \Zb_k)/J$. But it is well-known 
% that $\rees_\FC(M) \simeq \CM[\hbar] \otimes M$ (as $\CM[\hbar]$-modules) 
% for any filtered $\CM$-vector space.
% 
% \medskip
% 
% (b) Through the isomorphism $\th$, we have $\Zb_0/J_0=\grad_\FC(\Zb_k)/\grad_\FC(J)$. 
% 
% 
% \end{proof}

\bigskip

% The construction will be done in two steps. 
% The first one consist in attaching to a symplectic leaf $\SC$ 
% a $\t$-split parabolic subgroup $P$ (well-defined up to $W_\t$-conjugacy): 
% this will be done in~\S{sub:para} by understanding the image $\Upsilon(\SC)$ in 
% $(V/W)^\t \times (V^*/W)^\t$. No completion is required for this step. 
% The second step is to explain how to attach to $\SC$ a $\t$-cuspidal point 
% of $\ZC_k(V_P,P)^\t$: this steps requires the use of completions of rings. 
% It will then remain to prove that the construction gives a bijective map.

\section{$\taub$-Harish-Chandra theory of symplectic leaves}\label{sec:tau-hc}

\medskip

Let $P$ be a parabolic subgroup of $W$ and let $\PG$ denote its conjugacy class.  
Let $k_P$ denote the restriction of $k$ to the hyperplane arrangement of $P$ 
and let $k_P^\circ$ denote its ``extension by zero'' to the hyperplane 
arrangement of $\Nrm_W(P)$: in other words, if $H \in \AC(V,\Nrm_W(P))$ and 
$0 \le i \le e_H-1$, we set 
$$(k_P^\circ)_{H,i}=
\begin{cases}
k_{H,i} & \text{if $H \in \AC(V,P)$,}\\
0 & \text{otherwise.}
\end{cases}
$$
If $\XC$ is a locally closed subvariety of $V/W$, we denote by $\reallywidehat{\ZC_k(V,W)}_\XC$ 
the scheme equal to the completion of $\ZC_k(V,W)$ at it locally closed subvariety 
$(\pi \circ \Upsilon_k)^{-1}(\XC)$. Note that it inherits a Poisson structure 
from the one of $\ZC_k(V,W)$ (see for instance~\cite[Lem.~3.5]{bellamy cuspidal}). 

Our construction of the {\it $\taub$-Harish-Chandra theory of symplectic leaves} 
will follow from an upcoming result of Bellamy-Chalykh~\cite{be-ch} 
which says that there is a natural isomorphism of Poisson schemes\footnote{As~\cite{be-ch} is still 
not published, we mention here that it is based on {\it Bezrukavnikov-Etingof like} constructions 
of isomorphisms when completing at a single point of $V/W$.}
\equat\label{eq:bellamy}
\reallywidehat{\ZC_k(V,W)}_{\UC(\PG)} \simeq \reallywidehat{\ZC_{k_P^\circ}(V,\Nrm_W(P))}_{\UC(\PG)}.
\endequat
Note that $\UC(\PG) \simeq \VC(P)/\Nrm_W(P)$ maybe be viewed as a locally closed 
subvariety of both $V/W$ and $V/\Nrm_W(P)$. The construction of this isomorphism 
implies that it is $\t$-equivariant.

A sheafified version of Proposition~\ref{prop:completion-fixe} implies that 
we can take fixed points under the action of $\t$ in the above isomorphism 
and get an isomorphism
\equat\label{eq:bellamy plus}
\reallywidehat{\ZC_k(V,W)^\t}_{\UC(\PG)^\t} \simeq 
\reallywidehat{\ZC_{k_P^\circ}(V,\Nrm_W(P))^\t}_{\UC(\PG)^\t},
\endequat
with obvious notation. Moreover, this isomorphism is also Poisson: indeed, the Poisson structure 
on the left-hand side comes from the Poisson structure on the quotient 
scheme $(\reallywidehat{\ZC_k(V,W)}_{\UC(\PG)})/\langle \t \rangle$ 
and one can use Corollary~\ref{coro:completion-G} 
(and similary for the right-hand side).

Now, the irreducible (i.e. connected in this case) components of $\UC(\PG)^\t$ have been 
described in Corollary~\ref{coro:stratification}: this leads to a decomposition 
of the two schemes involved in isomorphism~\eqref{eq:bellamy plus}. We focus on the 
irreducible component $\iota_\t(\UC_\t(\PG_\split^\t))$ of $\UC(\PG)^\t$ 
and get an isomorphism of Poisson schemes
\equat\label{eq:bellamy plus plus}
\reallywidehat{\ZC_k(V,W)^\t}_{\iota_\t(\UC_\t(\PG_\split^\t))} \simeq 
\reallywidehat{\ZC_{k_P^\circ}(V,\Nrm_W(P))^\t}_{\iota_\t(\UC_\t(\PG_\split^\t))}.
\endequat
A sheafified version of~\cite[Lem.~3.3,~3.4,~3.5]{bellamy cuspidal} provides 
a natural bijection between Poisson reduced irreducible subschemes of $\ZC_k(V,W)^\t$ 
of dimension $2\dim(V^P)^\t$ meeting $(\pi \circ \Upsilon_k)^{-1}(\iota_\t(\UC_\t(\PG_\split^\t))$ 
and Poisson reduced irreducible subschemes of 
$\reallywidehat{\ZC_k(V,W)^\t}_{\iota_\t(\UC_\t(\PG_\split^\t))}$ of dimension $2\dim(V^P)^\t$. 
A similar bijection is obtained with the right-hand side of~\eqref{eq:bellamy plus plus}. 
Using the isomorphism~\eqref{eq:bellamy plus plus}, one gets a bijection between the following two sets:
\begin{itemize}
\item[$\bullet$] The set $\SC_{\PG_\split^\t}$ 
of symplectic leaves of $\ZC_k(V,W)^\t$ of dimension $2\dim(V^P)^\t$ 
meeting $(\pi \circ \Upsilon_k)^{-1}(\iota_\t(\UC_\t(\PG_\split^\t))$;

\item[$\bullet$] The set $\SC_{\PG_\split^\t}'$ 
of symplectic leaves of $\ZC_{k_P^\circ}(V,\Nrm_W(P))^\t$ of dimension $2\dim(V^P)^\t$ 
meeting $(\pi' \circ \Upsilon_k')^{-1}(\iota_\t(\UC_\t(\PG_\split^\t))$.
\end{itemize}
Here, the maps $\pi'$ and $\Upsilon_k'$ are the analogues of $\pi$ and $\Upsilon_k$ for the 
Calogero-Moser space $\ZC_{k_P^\circ}(V,\Nrm_W(P))$. 

But it follows from Proposition~\ref{prop:image upsilon} that $\SC_{\PG_\split^\t}$ 
is exactly the set of symplectic leaves $\SC$ of $\ZC_k(V,W)^\t$ such that 
$\PG_\SC=\PG_\split^\t$. So Theorem~A will follow from the next lemma:

\bigskip

\begin{lem}\label{lem:fin}
The set $\SC_{\PG_\split^\t}'$ is in natural bijection with the set of $\Nrm_{W_\t}(P_\t)$-orbits 
of cuspidal points of $\ZC_{k_P}(V_P,P)^\t$. 
\end{lem}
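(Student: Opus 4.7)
The plan is to exploit the fact that the parameter $k_P^\circ$ vanishes on every reflecting hyperplane of $\Nrm_W(P)$ that is not a reflecting hyperplane of $P$. This makes the Cherednik algebra factor: inspecting the defining relations~\eqref{eq:rels}, the reflections in $\Nrm_W(P)$ whose hyperplanes are not reflecting hyperplanes of $P$ contribute no commutator term, so
$$\Hb_{k_P^\circ}(V,\Nrm_W(P)) \simeq \Hb_{k_P}(V,P) \rtimes \Nrmov_W(P).$$
Moreover, since $V=V_P\oplus V^P$ as $P$-modules and $P$ acts trivially on $V^P$, one has $\Hb_{k_P}(V,P)\simeq \Hb_{k_P}(V_P,P)\otimes \CM[V^P\times V^{*P}]$. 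Passing to centres yields a $\CM^\times$-equivariant Poisson isomorphism
$$\ZC_{k_P^\circ}(V,\Nrm_W(P)) \simeq (\ZC_{k_P}(V_P,P)\times V^P\times V^{*P})\big/\Nrmov_W(P),$$
with $\Nrmov_W(P)$ acting diagonally.

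Next, I would take $\t$-fixed points. The element $\t$ normalises $P$, preserves $V=V_P\oplus V^P$, and acts on $\Nrmov_W(P)$. Applying Springer/Lehrer--Springer type arguments similar to Proposition~\ref{prop:springer-double} and Proposition~\ref{prop:param-tau-split}, but now for the $\Nrmov_W(P)$-action twisted by $\t$, one decomposes the $\t$-fixed points of the quotient into pieces indexed by $\Nrmov_W(P)$-conjugacy classes of twists $[w\t]$. The unique piece meeting $(\pi'\circ\Upsilon_k')^{-1}(\iota_\t(\UC_\t(\PG_\split^\t)))$ corresponds to the trivial class $[\t]$: indeed, this locus forces the $V_P/P$-component of the image to be the origin, and a Lehrer--Springer argument then pins down a representative that is $\t$-fixed in each factor of the product. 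Combined with the identification $\Nrmov_W(P)^\t=\Nrm_{W_\t}(P_\t)$ of Lemma~\ref{lem:normalisateurs}, the relevant component is
$$\bigl(\ZC_{k_P}(V_P,P)^\t \times (V^P)^\t \times (V^{*P})^\t\bigr)\big/\Nrm_{W_\t}(P_\t).$$

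Finally, I would classify its symplectic leaves. Since $(V^P)^\t\times(V^{*P})^\t$ is a symplectic vector space of dimension $2\dim(V^P)^\t$, it forms a single symplectic leaf, so the symplectic leaves of the product $\ZC_{k_P}(V_P,P)^\t\times(V^P)^\t\times(V^{*P})^\t$ are precisely $\LC\times(V^P)^\t\times(V^{*P})^\t$ for $\LC$ a leaf of $\ZC_{k_P}(V_P,P)^\t$. Quotienting by the finite group $\Nrm_{W_\t}(P_\t)$ preserves dimensions, so a leaf of the quotient has dimension $2\dim(V^P)^\t$ exactly when $\dim\LC=0$, i.e.\ when $\LC$ is a $\t$-cuspidal point of $\ZC_{k_P}(V_P,P)^\t$. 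Such points are $\CM^\times$-fixed by Remark~\ref{rem:cusp-fixe}, hence sit over $0\in V_P/P$, so the locus constraint is automatically satisfied. Two such product leaves give the same leaf in the quotient precisely when their cuspidal points are $\Nrm_{W_\t}(P_\t)$-conjugate, which yields the desired bijection.

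The main obstacle is the second paragraph: one has to implement a ``relative'' Lehrer--Springer analysis for the pair $(\Nrmov_W(P),\t)$ in order to isolate the $[\t]$-component of the $\t$-fixed points, and to check that restricting to this component reduces the $\t$-fixed points of the quotient to the quotient of the $\t$-fixed points of the product. One also has to verify that for any $\t$-cuspidal point the stabiliser in $\Nrm_{W_\t}(P_\t)$ acts with an open free orbit on $(V^P)^\t\times(V^{*P})^\t$, so that the resulting leaf of the quotient really has the full expected dimension $2\dim(V^P)^\t$.
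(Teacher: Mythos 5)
Your plan is essentially the paper's proof: the same factorisation $\ZC_{k_P^\circ}(V,\Nrm_W(P))=(\ZC_{k_P}(V_P,P)\times V^P\times V^{*P})/\Nrmov_W(P)$, the same decomposition of $\t$-fixed points into components indexed by $\ECt_P\t/\Nrmov_W(P)$, the same identification $\Nrmov_W(P)^\t=\Nrmov_{W_\t}(P_\t)$ from Lemma~\ref{lem:normalisateurs}, and the same final reduction to cuspidal points of $\ZC_{k_P}(V_P,P)^\t$. Your ``main obstacle'' worry about isolating the relevant component via a relative Lehrer--Springer analysis is not really an obstacle: it is exactly the argument of \eqref{eq:uptau} and \eqref{eq:uuptau} from Section~3, which the paper simply reuses.

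The genuine soft spot is the one you flag at the end, and it is more serious than you present it. You work with all of $(V^P)^\t\times(V^{*P})^\t$ and assert that ``quotienting by the finite group preserves dimensions'' so the leaves of the quotient of top dimension correspond bijectively to $\Nrm_{W_\t}(P_\t)$-orbits of cuspidal points. But $\Nrm_{W_\t}(P_\t)$ does not act freely on $(V^P)^\t\times(V^{*P})^\t$ (e.g. the origin is fixed), and when the action is not free a single symplectic leaf of the product does not map to a single leaf of the quotient --- by Proposition~\ref{prop:leaves-lisse} it breaks into strata. So the passage from ``leaves upstairs'' to ``leaves of the quotient'' cannot be done by just quotienting. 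The paper avoids this entirely by first taking the fibre product with $\UC(\PG)$ before taking $\t$-fixed points: this replaces $V^P$ by the open subset $\VC(P)$ (on which $\Nrmov_W(P)$ acts freely by definition of $\VC(P)$), and after passing to $\t$-fixed points replaces $(V^P)^\t$ by $\VC(P)^\t$, which is non-empty because $P$ is $\t$-split and on which $\Nrmov_W(P)^\t=\Nrm_{W_\t}(P_\t)$ acts freely. Once freeness is secured, Corollary~\ref{coro:finitude-bis} gives the bijection directly. Your argument can be repaired by making this restriction explicit --- observe that the locus constraint ``meets $(\pi'\circ\Upsilon_k')^{-1}(\iota_\t(\UC_\t(\PG_\split^\t)))$'' in the definition of $\SC_{\PG_\split^\t}'$ forces the $V^P$-component to live in $\VC(P)^\t$, and then work over $\VC(P)^\t$ throughout --- but as written the freeness step is missing, and the sentence claiming the locus constraint ``forces the $V_P/P$-component of the image to be the origin'' confuses the role of the two factors: it is the $V^P$-component, not the $V_P/P$-component, that the constraint pins down.
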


\bigskip

\begin{proof}
Since $k_P^\circ$ is the extension by zero of $k_P$, we have 
$$\ZC_{k_P^\circ}(V,\Nrm_W(P))=\ZC_{k_P}(V,P)/\Nrmov_W(P) 
= (V^P \times V^{*P} \times \ZC_{k_P}(V_P,P))/\Nrmov_W(P).$$
Consequently, 
$$\UC(\PG) \times_{V/\Nrm_W(P)} \ZC_{k_P^\circ}(V,\Nrm_W(P))=
(\VC(P) \times V^{*P} \times (0 \times_{V_P/P} \ZC_{k_P}(V_P,P)))/\Nrmov_W(P).$$
As in~\eqref{eq:uptau} and~\eqref{eq:uuptau}, the $\t$-fixed points of 
$(\VC(P) \times V^{*P} \times (0 \times_{V_P/P} \ZC_{k_P}(V_P,P)))/\Nrmov_W(P)$ 
decomposes into pieces indexed by $\ECt_P\t/\Nrmov_W(P)$ as follows:
\begin{multline*}
\Bigl((\VC(P) \times V^{*P} \times (0 \times_{V_P/P} \ZC_{k_P}(V_P,P)))/\Nrmov_W(P)\Bigr)^\t= \\
\bigcup_{w \in [\ECt_P\t/\Nrmov_W(P)]} (\VC(P)^{w\t} \times (V^{*P})^{w\t} 
\times \ZC_{k_P}(V_P,P)^{w\t})/\Nrmov_W(P)^{w\t}.
\end{multline*}
Here, $[\ECt_P\t/\Nrmov_W(P)]$ is a set of representatives of $\ECt_P\t/\Nrmov_W(P)$. 
We may, and we will, assume that $1 \in [\ECt_P\t/\Nrmov_W(P)]$. Then, by construction, 
only the piece indexed by $1$ meets $(\pi' \circ \Upsilon_k')^{-1}(\iota_\t(\UC_\t(\PG_\split^\t))$. 
Therefore, $\SC_{\PG_\split^\t}'$ is in natural bijection with the set of 
symplectic leaves of 
$$\XC=(\VC(P)^{\t} \times (V^{*P})^{\t} \times \ZC_{k_P}(V_P,P)^{\t})/\Nrmov_W(P)^{\t}$$
of dimension $2\dim(V^P)^\t$. 
But $\Nrmov_W(P)^{\t}=\Nrmov_{W_\t}(P_\t)$ by Lemma~\ref{lem:normalisateurs}, and it acts freely on 
$\VC(P)^{\t} \times (V^{*P})^{\t} \times \ZC_{k_P}(V_P,P)^{\t}$. 
So it follows from Corollary~\ref{coro:finitude-bis} 
that the set of symplectic leaves of $\XC$ is in natural 
bijection with the set of $\Nrmov_{W_\t}(P_\t)$-orbits of symplectic leaves of 
$$\YC=\VC(P)^{\t} \times (V^{*P})^{\t} \times \ZC_{k_P}(V_P,P)^{\t}.$$ 
But any symplectic leaf of $\YC$ is of the form $\VC(P)^{\t} \times (V^{*P})^{\t} \times \SC$, 
where $\SC$ is a symplectic leaf of $\ZC_{k_P}(V_P,P)^{\t}$. For dimension reasons, 
$\SC_{\PG_\split^\t}'$ is in natural bijection with the set of $\Nrmov_{W_\t}(P_\t)$-orbits of 
symplectic leaves of $\ZC_{k_P}(V_P,P)^{\t}$ if dimension $0$, which is exactly 
the desired statement.
\end{proof}

\section{Examples}

\medskip

\subsection{Smooth case}
Assume in this subsection, and only in this subsection, that 
$\ZC_k$ is smooth and that $\t$ is of the form $\z w$ for some 
root of unity $\z$ and $w \in W$. We denote by $d$ the order of $\z$. Then
\equat\label{eq:bete}
\ZC_k^\t = \ZC_k^{\mub_d}.
\endequat
Since $\ZC_k$ is smooth, it is symplectic by~\eqref{eq:poisson prime}, and so $\ZC_k^\t$ 
is also smooth and symplectic: its symplectic leaves are exactly its 
irreducible (i.e. connected) components. 

In~\cite{bonnafe maksimau}, Maksimau and the author have described the irreducible 
components of $\ZC_k^{\mub_d}$ as particular Calogero-Moser spaces, and the reader can check 
that this description is compatible with Conjecture~B: however, it is not 
proved that the isomorphism preserves the 
Poisson structure. In other words~\cite[Theo.~2.13~and~5.1]{bonnafe maksimau}:

\bigskip

\begin{theo}
If $\ZC_k$ is smooth and $\t \in \CM^\times W$, then 
Conjecture B holds, possibly up to the Poisson structure.
\end{theo}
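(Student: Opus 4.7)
The plan is to reduce directly to the results of Bonnaf\'e-Maksimau~\cite{bonnafe maksimau}. Since $\ZC_k$ is smooth, the Poisson bracket makes it a symplectic variety by the bijection~\eqref{eq:poisson prime general}, and then $\ZC_k^\t$ is also smooth and symplectic (apply Remark~\ref{rem:poisson variety} to the finite cyclic group generated by $\t$). Consequently each symplectic leaf $\SC_{P,p}$ is a connected component of $\ZC_k^\t$, already smooth and normal, so $\overline{\SC}_{P,p}^\nor=\SC_{P,p}$ and it remains only to exhibit a $\CM^\times$-equivariant isomorphism of varieties between $\SC_{P,p}$ and some $\ZC_l((V^P)^\t,\Nrmov_{W_\t}(P_\t))$.

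First I would invoke~\eqref{eq:bete} to replace $\ZC_k^\t$ by $\ZC_k^{\mub_d}$, where $d$ is the order of the root of unity $\zeta$ appearing in $\t=\zeta w$. Then I would apply~\cite[Theo.~2.13~and~5.1]{bonnafe maksimau}, which describes each irreducible component of $\ZC_k^{\mub_d}$ as a Calogero-Moser space $\ZC_{l'}(V',W')$ for some triple $(V',W',l')$ built from the fixed-point data of the $\mub_d$-action induced by $\t$.

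Next, given $(P,p)\in\Cus_k^\t(V,W)$, I would match the triple supplied by~\cite{bonnafe maksimau} for the component $\SC_{P,p}$ with the triple $((V^P)^\t,\Nrmov_{W_\t}(P_\t),l)$ predicted by Conjecture~B. The identification of the ambient vector space with $(V^P)^\t$ and of the reflection group with $\Nrmov_{W_\t}(P_\t)$ follows from combining Proposition~\ref{prop:image upsilon} (which pins down $\Upsilon_k(\SCov_{P,p})$ as the image of $(V^P)^\t\times(V^{*P})^\t$ in $V/W\times V^*/W$) with Lemma~\ref{lem:normalisateurs} (for the normalizer identification $\Nrmov_W(P)^\t=\Nrmov_{W_\t}(P_\t)$) and Lehrer-Springer Theorem~\ref{theo:lsp} (for the reflection group structure on $V^\t$). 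The parameter $l$ is then read off from the explicit formulas in~\cite{bonnafe maksimau}.

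The main obstacle---and the reason for the qualifier ``possibly up to the Poisson structure''---is that the isomorphism constructed in~\cite{bonnafe maksimau} is produced via Rees-algebra deformation techniques adapted to the $\CM^\times$-equivariant scheme structure, but is not {\it a priori} known to preserve Poisson brackets. Since the statement explicitly permits this loophole, the proof reduces to the bookkeeping verification of the matching of combinatorial data described above; the Poisson compatibility is left as an open question consistent with the rest of Section~9.
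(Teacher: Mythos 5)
Your proposal matches the paper's approach: the paper also just notes that $\ZC_k^\t = \ZC_k^{\mub_d}$ is smooth and symplectic (so its symplectic leaves are its connected components, each already normal) and then cites~\cite[Theo.~2.13~and~5.1]{bonnafe maksimau}, leaving the verification that the description there agrees with Conjecture~B to the reader. You supply a bit more detail on how the matching of the triples would go (via Proposition~\ref{prop:image upsilon}, Lemma~\ref{lem:normalisateurs} and Theorem~\ref{theo:lsp}) than the paper, which is a welcome elaboration, and you correctly identify the Poisson-compatibility gap as the source of the caveat in the statement.
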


\bigskip

\subsection{Type ${\boldsymbol{G_4}}$} 
Thiel and the author~\cite{bonnafe thiel} have 
developed algorithms for computing presentations of $\Zb_k$ that have been implemented 
in {\sc Magma}~\cite{magma} (more precisely, in the {\sc Champ} package for {\sc Magma} 
written by Thiel~\cite{thiel}). This allows computations for (very) small groups.

We assume in this subsection, and only in this subsection, that $W$ is the group $G_4$, 
in Shephard-Todd classification~\cite{ST}. Then a presentation of $\Zb_k$ 
can be obtained with {\sc Magma} 
(see for instance~\cite[\S{5}]{bonnafe maksimau} or~\cite[Theo.~5.2]{bonnafe thiel}) 
and it has been checked in~\cite[Theo~4.7]{bonnafe thiel} that Conjecture~B holds in this case:

\bigskip

\begin{theo}
If $W=G_4$, then Conjecture~B holds.
\end{theo}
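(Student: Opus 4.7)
The plan is to verify Conjecture~B by explicit computation, leveraging the presentation of $\Zb_k$ for $W = G_4$ obtained via the CHAMP package for Magma \cite{thiel} and already exploited in \cite{bonnafe thiel}. Since $G_4$ has rank $2$, order $24$, and admits only reflections of order $3$, its parabolic subgroups up to conjugacy are the trivial subgroup, a cyclic subgroup of order $3$ generated by a reflection, and $G_4$ itself, yielding only a handful of conjugacy classes to consider.

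First, I would enumerate, for each element $\t$ of finite order in $\Nrm_{\Gb\Lb_\CM(V)}(G_4)$ stabilizing $k$, the associated reflection group $W_\t$ via Lehrer-Springer Theorem~\ref{theo:lsp}, then use Lemma~\ref{lem:p bijectif} and Proposition~\ref{prop:param-tau-split} to list all $\t$-split parabolic subgroups up to $W_\t$-conjugacy. For each such parabolic $P$, I would compute $\Cus_{k_P}^\t(V_P,P)$ by locating the maximal Poisson ideals of $\Zb_{k_P}(V_P,P)/\sqrt{I_{k_P}}$ in the explicit presentation produced by CHAMP; the dimension constraint $\dim \SC_{P,p} = 2\dim(V^P)^\t$ (Theorem~A) guides the search.

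Next, for each pair $(P,p) \in \Cus_k^\t(V,W)/W_\t$, I would identify the Poisson prime $\pG_{\SC_{P,p}} \in \pspec(\Zb_k^\t/\sqrt{I_k^\t})$ --- pinned down by Proposition~\ref{prop:image upsilon} via its image under $\Upsilon_k$ --- compute the quotient $\Zb_k^\t/\pG_{\SC_{P,p}}$, and take its integral closure in its fraction field to obtain $\overline{\SC}_{P,p}^\nor$. On the target side, $((V^P)^\t, \Nrmov_{W_\t}(P_\t))$ is a reflection pair of rank at most $2$, for which CHAMP again produces an explicit presentation of $\ZC_l$ for each candidate parameter~$l$. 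Matching degrees, leading terms, and Poisson brackets between the two presentations both fixes $l$ uniquely and yields the required $\CM^\times$-equivariant Poisson isomorphism; this is precisely the content of \cite[Theo.~4.7]{bonnafe thiel}.

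The main obstacle is not the identification of the underlying varieties, which in these small dimensions amounts to recognizing Kleinian singularities and their normalizations, but rather the verification that the Poisson structure is preserved. Explicitly computing the Poisson bracket on $\overline{\SC}_{P,p}^\nor$ induced from $\Zb_k$ and matching it, bracket by bracket of generators, against the Etingof-Ginzburg bracket on $\ZC_l$ is where the computation becomes delicate; it is exactly this step that is left open in the general smooth case treated in the previous subsection, and is only tractable here because every ring involved is small enough for Magma to handle.
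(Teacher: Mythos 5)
Your proposal takes essentially the same route as the paper: the paper's proof consists of the single citation to~\cite[Theo.~4.7]{bonnafe thiel}, where the statement is verified by explicit {\sc Magma}/{\sc Champ} computation on presentations of the relevant centres. Your detailed sketch of that computation --- enumerating $\t$-split parabolics, locating cuspidal points, and matching Poisson brackets between explicit presentations --- is consistent with the cited work and correctly identifies the Poisson-structure verification as the delicate step.
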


\bigskip

\subsection{Type ${\boldsymbol{B}}$} 
Assume in this subsection, and only in this subsection, that $W=W_n$ is a Coxeter 
group of type $B_n$ for some $n \ge 2$ (i.e. we may assume that $W=G(2,1,n)$ 
in Shephard-Todd classification) and that $\t=\Id_V$. Let $t=\diag(-1,1,\dots,1) \in W_n$ 
and, for $1 \le j \le n-1$, let $s_j$ denote the permutation matrix corresponding to 
the transposition $(j,j+1)$. 

There are two conjugacy classes of reflections: the class of $t$ (which 
generates an elementary abelian normal subgroup of order $2^n$) and the one of $s_1$  
(which generates a normal subgroup $W_n'=G(2,2,n)$ of index $2$ isomorphic to a Coxeter group 
of type $D_n$). 
We set $b=c_k(t)$ and $a=c_k(s_1)$ and we denote by $I_n$ the set of $m \in \ZM$ 
such that $|m| \le n-1$. The Dynkin diagram, together with the values of the parameter 
function $c_k$, is given as follows:
\begin{center}
\begin{picture}(250,40)
\put( 40, 20){\circle{10}}
\put( 44, 17){\line(1,0){33}}
\put( 44, 23){\line(1,0){33}}
\put( 81, 20){\circle{10}}
\put( 86, 20){\line(1,0){29}}
\put(120, 20){\circle{10}}
\put(125, 20){\line(1,0){20}}
\put(155, 17){$\cdot$}
\put(165, 17){$\cdot$}
\put(175, 17){$\cdot$}
\put(185, 20){\line(1,0){20}}
\put(210, 20){\circle{10}}
\put( 38, 30){$t$}
\put( 76, 30){$s_1$}
\put(116, 30){$s_2$}
\put(202, 30){$s_{n{-}1}$}
\put( 38, 5){$b$}
\put( 78, 5){$a$}
\put(118, 5){$a$}
\put(208, 5){$a$}
\end{picture}
\end{center}
The case where $a=0$ is somewhat uninteresting, as then $\ZC_k \simeq (C_b)^n/\SG_n$, 
where $C_b$ is the Calogero-Moser associated with the cyclic group of order $2$ 
whose equation is given by $C_b=\{(x,y,z) \in \CM^3~|~z^2=xy+4b^2\}$. 
So we assume throughout this subsection that $a \neq 0$.
This implies that 
\equat\label{eq:B-lisse}
\text{\it $\ZC_k$ is smooth if and only if $b/a \not\in I_n$.}
\endequat
As $\t=\Id_V$, the smooth case is uninteresting so we assume that $b/a=m \in I_n$. 
As the cases $b/a=m$ and $b/a=-m$ are equivalent, we also may assume that $m \ge 0$. 
The Calogero-Moser space $\ZC_k$ will then we denoted by $\ZC_{a,ma}(n)$. 
Symplectic leaves have been parametrized by Martino in his PhD Thesis~\cite[\S{5.4}]{martino these}. 
Bellamy and Thiel have then reinterpreted his result in terms of Bellamy parametrization 
{\it \`a la Harish-Chandra}~\cite[Lem.~6.5]{bellamy thiel}. This can be summarized as follows:
\begin{itemize}
\item[$\bullet$] $\ZC_{a,ma}(n)$ admits a cuspidal point if and only if there exists $r \in \ZM_{\ge 0}$ 
such that $n=r(r+m)$: if so, there is only one cuspidal point that we denote by $p_n$.

\item[$\bullet$] Therefore, $\Cus_{\Id_V}(\ZC_{a,ma}(n))=\{(W_{r(r+m)},p_{r(r+m)})~|~r(r+m) \le n\}$, 
with the convention that $W_0=1$ and $W_1=\langle t \rangle$.

\item[$\bullet$] If $r(r+m) \le n$, we denote by $\SC_r^m(n)$ the symplectic leaf of 
$\ZC_{a,ma}(n)$ associated with $(W_{r(r+m)},p_{r(r+m)})$ through the bijection of 
Theorem~A (since we are in the case where $\t=\Id_V$, this bijection 
was established by Bellamy~\cite{bellamy cuspidal} and Losev~\cite{losev}). We have 
$$\dim \SC_r^m(n)=2(n-r(r+m)).$$
\end{itemize}
If $r(r+m) \le n$, then 
$$\Nrmov_{W_n}(W_{r(r+m)}) \simeq W_{n-r(r+m)}.$$
Using the description of $\ZC_{a,ma}(n)$ in terms of quiver varieties, 
Bellamy, Maksimau and Schedler proved the following result~\cite{be-sc}:

\bigskip

\begin{theo}[Bellamy-Maksimau-Schedler]\label{theo:bellamy}
If $r(r+m) \le n$, then there is a $\CM^\times$-equivariant isomorphism of Poisson varieties 
$$\overline{\SC_r^m(n)} \simeq \ZC_{a,(m+2r)a}(n-r(r+m)).$$
\end{theo}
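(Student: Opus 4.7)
The plan is to prove this via the quiver variety description of $\ZC_{a,ma}(n)$. Following Gan-Ginzburg (building on Etingof-Ginzburg), the Calogero-Moser space for $W_n=G(2,1,n)$ is $\CM^\times$-equivariantly Poisson isomorphic to a Nakajima quiver variety $\mathfrak{M}_\lambda(v,w)$ for the affine Dynkin quiver $\widetilde{A}_1$, with dimension vector $v=(n,n)$, framing $w=(1,0)$ at one vertex, and deformation parameter $\lambda\in\CM^2$ computed explicitly from the Cherednik parameters $(a,ma)$. Under this identification the Poisson bracket and the $\CM^\times$-action correspond to the standard structures on $\mathfrak{M}_\lambda(v,w)$.

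First I would transport the Bellamy-Losev datum $(W_{r(r+m)},p_{r(r+m)})$ into the representation theory of the deformed preprojective algebra $\Pi^\lambda$. By Crawley-Boevey, cuspidal points of $\mathfrak{M}_\lambda(v,w)$ correspond to rigid simple $\Pi^\lambda$-modules, and the arithmetic condition $n=r(r+m)$ is exactly the existence condition for such a rigid simple, say of dimension vector $\delta_{r,m}$. The leaf $\SC_r^m(n)\subset \ZC_{a,ma}(n)$ then corresponds to the stratum of $\mathfrak{M}_\lambda(v,w)$ whose generic point is the direct sum of a framed generic $\Pi^\lambda$-module of dimension $(n-r(r+m),n-r(r+m))$ with the rigid simple of dimension $\delta_{r,m}$; the stratum dimension $2(n-r(r+m))$ matches the leaf dimension predicted by Theorem~A.

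The core of the argument is then the general result of Bellamy-Schedler on closures of strata in Nakajima quiver varieties: the normalization of the closure of such a stratum is itself a Nakajima quiver variety $\mathfrak{M}_{\lambda'}(v',w)$, with dimension vector $v'=v-\delta_{r,m}=(n-r(r+m),n-r(r+m))$, the same framing $w$, and a new parameter $\lambda'$ obtained from $\lambda$ by the shift dictated by pairing with $\delta_{r,m}$. Reading $\mathfrak{M}_{\lambda'}(v',w)$ back through the Gan-Ginzburg dictionary gives a Calogero-Moser space of type $B_{n-r(r+m)}$, and the $\CM^\times$-equivariance is inherited from the compatible gradings on both sides (closure and normalization are $\CM^\times$-equivariant operations).

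The main obstacle I expect is the combinatorial matching in the last step: verifying that the parameter shift $\lambda\mapsto\lambda'$ coming from the general quiver-variety theorem translates, under Gan-Ginzburg, into precisely $(a,ma)\mapsto(a,(m+2r)a)$, i.e.\ that $a$ is preserved while the cyclotomic parameter shifts by exactly $2ra$. Concretely, this reduces to identifying $\delta_{r,m}$ with an explicit positive root of $\widehat{\mathfrak{sl}}_2$, computing the pairing of $\lambda$ with $\delta_{r,m}$, and checking that the resulting shift matches the desired Cherednik parameter change. Disentangling the contribution of the framing at the special vertex and fixing the correct normalization of the map $(a,ma)\mapsto\lambda$ is where most of the bookkeeping effort will lie; the root-theoretic nature of the shift makes it natural to expect a linear dependence on $r$, but matching the constant of proportionality exactly as $2a$ is the delicate part.
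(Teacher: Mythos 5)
The paper does not prove this theorem: it states it as a result of Bellamy--Schedler~\cite{be-sc} (cited as ``in preparation'') and only remarks that they proved it ``using the description of $\ZC_{a,ma}(n)$ in terms of quiver varieties.'' Your sketch follows exactly that one-line description --- Gan--Ginzburg identification with an $\widetilde{A}_1$ Nakajima quiver variety, Crawley-Boevey stratification by representation types, and the parameter bookkeeping under $\delta_{r,m}$ --- so your route is consistent with the route the paper attributes to them. One caution: the ``core of the argument'' step in your outline (that the normalization of a stratum closure is again a Nakajima quiver variety with shifted parameter) is itself essentially the general Bellamy--Schedler theorem; so what you have written is a reduction of the stated result to that general quiver-variety result plus the verification that the root-theoretic shift translates to $(a,ma)\mapsto(a,(m+2r)a)$ under the Gan--Ginzburg dictionary, not an independent proof. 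Since the paper itself only cites the result, I cannot compare your sketch against a written-out argument, but it correctly identifies the ingredients and the place where the delicate bookkeeping sits.
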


\bigskip

Note that Theorem~\ref{theo:bellamy} contains in particular the fact that 
$\overline{\SC_r^m(n)}$ is normal, so coincides with 
its normalization $\overline{\SC_r^m(n)}^\nor$.

\bigskip

\begin{coro}
Conjecture~B holds if $W$ is a Coxeter group of type $B_n$ and $\t=\Id_V$.
\end{coro}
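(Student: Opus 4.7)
The plan is to deduce the corollary directly from Theorem~\ref{theo:bellamy} by matching the right-hand side of the Bellamy-Schedler isomorphism against the shape predicted by Conjecture~B. Since $\t = \Id_V$, Conjecture~B for a pair $(P,p) \in \Cus_k^{\Id_V}(V,W_n)$ asks for a $\CM^\times$-equivariant Poisson isomorphism $\overline{\SC}_{P,p}^{\nor} \simeq \ZC_l(V^P, \Nrmov_{W_n}(P))$ for some parameter $l$, because $W_\t = W_n$ and $P_\t = P$ in this case.

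First, I would dispose of the trivial cases. The case $a = 0$ is handled directly through the decomposition $\ZC_k \simeq (C_b)^n/\SG_n$ recalled in the text. If $a \neq 0$ and $b/a \notin I_n$, then by~\eqref{eq:B-lisse} the variety $\ZC_k$ is smooth, so its only symplectic leaf is $\ZC_k$ itself (corresponding to $P = 1$ and the unique cuspidal point of a point variety), and Conjecture~B is satisfied trivially by $l = k$. Using the equivalence of the cases $b/a = \pm m$, I then reduce to the situation $b/a = m \in I_n$ with $m \ge 0$ and $a \neq 0$, so that $\ZC_k = \ZC_{a,ma}(n)$.

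The main step is to invoke the Bellamy-Thiel classification recalled just before Theorem~\ref{theo:bellamy}: the set $\Cus_k^{\Id_V}(V,W_n)$ consists of the pairs $(W_{r(r+m)}, p_{r(r+m)})$ with $r \ge 0$ and $r(r+m) \le n$. For such a pair, with $P = W_{r(r+m)}$, Theorem~\ref{theo:bellamy} supplies the $\CM^\times$-equivariant Poisson isomorphism $\overline{\SC_r^m(n)}^{\nor} \simeq \ZC_{a,(m+2r)a}(n - r(r+m))$. It remains only to verify that the right-hand side is a Calogero-Moser space of the form $\ZC_l(V^P, \Nrmov_{W_n}(P))$. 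Using the isomorphism $\Nrmov_{W_n}(W_{r(r+m)}) \simeq W_{n-r(r+m)}$ quoted in the text, together with the natural identification of $V^{W_{r(r+m)}}$ with the reflection representation of the Coxeter group $W_{n-r(r+m)}$ of type $B_{n-r(r+m)}$, the parameter $l$ can be read off directly: it assigns the value $a$ to the conjugacy class of $s_1$ and the value $(m+2r)a$ to the conjugacy class of $t$.

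No serious obstacle arises in this reduction; the non-trivial analytic and geometric content of the corollary is entirely absorbed into the Bellamy-Schedler theorem (which itself rests on the quiver-variety description of $\ZC_{a,ma}(n)$). The only points requiring a brief verification are the trivial case analysis and the identification of the reflection datum of $\Nrmov_{W_n}(W_{r(r+m)})$ acting on $V^{W_{r(r+m)}}$, both of which are standard facts about parabolic subgroups of Coxeter groups of type $B$.
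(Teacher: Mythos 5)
Your proposal is correct and follows the same route the paper intends: the paper states the corollary without an explicit proof because it is meant to follow immediately from the Bellamy--Schedler Theorem~\ref{theo:bellamy} together with the discussion just before it (the Bellamy--Thiel classification of $\Cus_k^{\Id_V}(V,W_n)$, the identification $\Nrmov_{W_n}(W_{r(r+m)}) \simeq W_{n-r(r+m)}$, and the preliminary reduction to $a\neq 0$, $b/a=m\in I_n$, $m\ge 0$). You have simply spelled out that implicit chain of reasoning, including the boundary cases $a=0$ and $\ZC_k$ smooth, and correctly read off the parameter $l$ from the right-hand side of the Bellamy--Schedler isomorphism.
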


\bigskip

\subsection{Type ${\boldsymbol{D}}$}
Assume in this subsection, and only in this subsection, that $W=W_n'$ is a Coxeter 
group of type $D_n$ for some $n \ge 4$ (i.e. we may assume that $W=G(2,2,n)$ 
in Shephard-Todd classification). We set $a=c_k(s_1)$, as in the 
previous subsection, and the Calogero-Moser space $\ZC_k$ will be denoted 
by $\ZC_a'(n)$. The case where $a=0$ being treated in Section~\ref{sec:0-tau}, 
we assume throughout this subsection that $a \neq 0$. 
The following facts are proved in~\cite[Theo.~7.2]{bellamy thiel}\footnote{Note 
that there is a little mistake in~\cite[Theo.~7.2]{bellamy thiel}, which can be easily 
corrected to give the statement written here.}:
\begin{itemize}
\item[$\bullet$] $\ZC_a'(n)$ admits a cuspidal point if and only if there exists 
$r \in \ZM_{\ge 0} \setminus \{1\}$ such that 
$n=r^2$: if so, there is only one cuspidal point that we denote by $p_n'$.

\item[$\bullet$] Therefore, 
$\Cus_{\Id_V}(\ZC_a'(n))=\{(W_{r^2}',p_{r^2}')~|~0 \le r^2 \le n~\text{and}~r \neq 1\}$, 
with the convention that $W_0'=1$.

\item[$\bullet$] If $0 \le r^2 \le n$ and $r \neq 1$, we denote by $\SC_r'(n)$ the symplectic leaf of 
$\ZC_a'(n)$ associated with $(W_{r^2}',p_{r^2}')$. We have 
$$\dim \SC_r'(n)=2(n-r^2).$$
\end{itemize}
Let us give another description, coming from the link between $\ZC_a'(n)$ and 
the Calogero-Moser space $\ZC_{a,ma}(n)$ of type $B_n$ defined in the previous section 
for the special value $m=0$. 
Indeed, $\ZC_a'(n)$ admits an action of the element 
$t \in W_n \subset \Nrm_{\Gb\Lb_\CM(V)}(W_n')$ defined 
in the previous subsection and~\cite[Prop.~4.17]{bellamy thiel}
$$\ZC_a'(n)/\langle t \rangle \simeq \ZC_{a,0}(n),$$
as Poisson varieties endowed with a $\CM^\times$-action. Denote by 
$\g_n : \ZC_a'(n) \to \ZC_{a,0}(n)$ the quotient morphism.

\bigskip

\begin{prop}\label{prop:bd}
We have
$$\SC_0'(n)=\g_n^{-1}(\SC_0^0(n) \cup \SC_1^0(n))\qquad\text{and}\qquad
\SC_r'(n)=\g_n^{-1}(\SC_r^0(n))$$
for all $r \ge 2$ such that $r^2 \le n$.
\end{prop}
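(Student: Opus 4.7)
Since $\t=\Id_V$, the Harish-Chandra parametrization of Theorem~A reduces to the Bellamy-Losev theorem. My plan is to compare, via Proposition~\ref{prop:image upsilon}, the parabolic classes attached to the leaves on each side of $\g_n$ and to match preimages by a dimension count and a downward induction. The starting point is the commutative square $\Upsilon_B \circ \g_n = \bar\pi \circ \Upsilon'$, where $\Upsilon_B$ and $\Upsilon'$ are the Calogero-Moser structural maps of $\ZC_{a,0}(n)$ and $\ZC_a'(n)$ respectively and $\bar\pi : V/W_n' \times V^*/W_n' \to V/W_n \times V^*/W_n$ is induced by the inclusion $W_n' \subset W_n$. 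The key combinatorial input is that for $r \ge 2$, the standard parabolics $W_{r^2}' \subset W_n'$ (type $D_{r^2}$) and $W_{r^2} \subset W_n$ (type $B_{r^2}$) have the same fixed subspace in $V$, namely the coordinate subspace on which the first $r^2$ coordinates vanish; hence $\bar\pi$ sends $\overline{\UC([W_{r^2}'])} \times \overline{\UC^*([W_{r^2}'])}$ onto $\overline{\UC([W_{r^2}])} \times \overline{\UC^*([W_{r^2}])}$.

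Applying Proposition~\ref{prop:image upsilon} to $\SC_r'(n)$ (whose associated parabolic class in $W_n'$ is $[W_{r^2}']$) and to $\SC_r^0(n)$ (whose associated parabolic class in $W_n$ is $[W_{r^2}]$), the commutative square yields $\Upsilon_B(\g_n(\overline{\SC_r'(n)})) = \Upsilon_B(\overline{\SC_r^0(n)})$ for every $r \ge 2$. I then need the fiber identification $\Upsilon_B^{-1}(\overline{\UC([W_{r^2}])} \times \overline{\UC^*([W_{r^2}])}) = \overline{\SC_r^0(n)}$: this preimage is closed, and any leaf $\SC_{r'}^0(n)$ meeting it must, by Proposition~\ref{prop:image upsilon}, satisfy $\overline{\UC([W_{r'^2}])} \subset \overline{\UC([W_{r^2}])}$, i.e.\ $r' \ge r$; together with the Bellamy-Thiel uniqueness of the cuspidal pair per parabolic class (so that no extra leaves contribute), this forces the preimage to be $\bigsqcup_{r' \ge r} \SC_{r'}^0(n) = \overline{\SC_r^0(n)}$. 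Combined with the dimensional equality $\dim \overline{\SC_r'(n)} = 2(n - r^2) = \dim \overline{\SC_r^0(n)}$ furnished by Theorem~A and with irreducibility, I obtain $\g_n(\overline{\SC_r'(n)}) = \overline{\SC_r^0(n)}$ and hence $\g_n^{-1}(\overline{\SC_r^0(n)}) = \overline{\SC_r'(n)}$.

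To extract the open-leaf statements, I proceed by downward induction on $r \ge 2$: granting $\g_n^{-1}(\overline{\SC_{r'}^0(n)}) = \overline{\SC_{r'}'(n)}$ for all $r' > r$, subtracting yields $\g_n^{-1}(\SC_r^0(n)) = \overline{\SC_r'(n)} \setminus \bigcup_{r' > r} \overline{\SC_{r'}'(n)} = \SC_r'(n)$. Finally, for the case $r = 0$, the leaf decompositions $\ZC_a'(n) = \SC_0'(n) \sqcup \bigsqcup_{r \ge 2,\, r^2 \le n} \SC_r'(n)$ and $\ZC_{a,0}(n) = \bigsqcup_{r \ge 0,\, r^2 \le n} \SC_r^0(n)$, together with the $r \ge 2$ identifications just established, force $\SC_0'(n) = \g_n^{-1}(\SC_0^0(n)) \sqcup \g_n^{-1}(\SC_1^0(n)) = \g_n^{-1}(\SC_0^0(n) \cup \SC_1^0(n))$ by set-theoretic complementation. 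The main obstacle is the fiber identification $\Upsilon_B^{-1}(\overline{\UC([W_{r^2}])} \times \overline{\UC^*([W_{r^2}])}) = \overline{\SC_r^0(n)}$, where one must carefully use the partial order on parabolic classes and Bellamy-Thiel uniqueness to rule out extraneous leaf closures; once this is in hand, everything else is essentially formal.
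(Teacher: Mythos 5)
Your proof has a genuine gap at the step you yourself flag as the ``main obstacle,'' and the paper in fact sidesteps the whole difficulty with a much simpler observation.

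The fiber identification $\Upsilon_B^{-1}\bigl(\overline{\UC([W_{r^2}])} \times \overline{\UC^*([W_{r^2}])}\bigr) = \overline{\SC_r^0(n)}$ does not follow from what you cite. Proposition~\ref{prop:image upsilon} gives $\Upsilon_B\bigl(\overline{\SC_{r'}^0(n)}\bigr) = \overline{\UC([W_{r'^2}])} \times \overline{\UC^*([W_{r'^2}])}$, but it says nothing that prevents a point of a lower-index leaf $\SC_{r'}^0(n)$ (with $r' < r$) from mapping into the smaller closed set $\overline{\UC([W_{r^2}])} \times \overline{\UC^*([W_{r^2}])}$. In general the $\Upsilon_k$-preimage of a stratum closure of $\PC$ is \emph{not} a union of symplectic leaf closures, so the inference ``any leaf $\SC_{r'}^0(n)$ meeting the preimage satisfies $r' \ge r$'' is unjustified; neither the closure order on parabolic classes nor the Bellamy--Thiel uniqueness statement supply it. Moreover there is a second, quieter gap: from $\g_n\bigl(\overline{\SC_r'(n)}\bigr) = \overline{\SC_r^0(n)}$ you deduce $\g_n^{-1}\bigl(\overline{\SC_r^0(n)}\bigr) = \overline{\SC_r'(n)}$, but since $\g_n$ is the quotient by $\langle t\rangle$ one only gets $\g_n^{-1}\bigl(\g_n(X)\bigr) = X \cup t(X)$; the desired equality requires $t$-stability of $\overline{\SC_r'(n)}$, which you never establish.

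The paper's argument starts from exactly the missing observation: the dimensions $2(n - r^2)$ of the leaves $\SC_r'(n)$ (over $r \in \{0\} \cup \{r \ge 2 : r^2 \le n\}$) are pairwise distinct, so every symplectic leaf of $\ZC_a'(n)$ is $t$-stable, and therefore each leaf equals $\g_n^{-1}$ of its image. With $t$-stability in hand one argues forward, not by preimages of $\Upsilon_B$: $\g_n\bigl(\overline{\SC_r'(n)}\bigr)$ is a closed irreducible Poisson subvariety of $\ZC_{a,0}(n)$ (since $\g_n$ is a finite Poisson morphism), hence the closure of some leaf, and by the (again pairwise distinct) dimensions it must be $\overline{\SC_r^0(n)}$. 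The open-leaf equalities and the $r=0$ case then fall out as in your last paragraph. So the commutative square with $\bar\pi$, the appeal to Proposition~\ref{prop:image upsilon}, and the fiber identification over $\PC$ can all be dropped; the proof is essentially the two sentences about dimensions and $t$-stability plus the Poisson-image argument.
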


\smallskip

\begin{proof}
The symplectic leaves of $\ZC_a'(n)$ are characterized by their dimension, 
so every symplectic leaf is $t$-stable (so is the inverse image, under $\g_n$, 
of its image in $\ZC_{a,0}(n)$). But if $0 \le r^2 \le n$ and $r \neq 1$, then 
$\g_n(\overline{\SC_r'(n)})$ is a closed irreducible Poisson 
subvariety of $\ZC_{a,0}(n)$, so it is the closure of a symplectic leaf. 
For dimension reason, it must me equal to $\overline{\SC_r^0(n)}$. The result follows.
\end{proof}

\medskip

\begin{coro}\label{coro:bd}
We have
$$\ZC_a'(n)^t=\g_n^{-1}(\overline{\SC_1^0(n)}).$$
In particular, if $4 \le r^2 \le n$, then $t$ acts trivially on $\SC_r'(n)$.
\end{coro}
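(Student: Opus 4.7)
The strategy is to compute $\ZC_a'(n)^t$ leaf by leaf. By the proof of Proposition~\ref{prop:bd}, every symplectic leaf of $\ZC_a'(n)$ is $t$-stable, so $\ZC_a'(n)^t = \SC_0'(n)^t \sqcup \bigsqcup_{r\ge 2,\, r^2\le n} \SC_r'(n)^t$. Since $\g_n$ realises $\ZC_{a,0}(n)$ as the quotient of $\ZC_a'(n)$ by $\langle t\rangle$, its set-theoretic fibres have cardinality $1$ or $2$, with cardinality $1$ precisely over the image $\g_n(\ZC_a'(n)^t)$; in particular $\ZC_a'(n)^t = \g_n^{-1}(\g_n(\ZC_a'(n)^t))$, so it suffices to identify this image as a closed subset of $\ZC_{a,0}(n)$.

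The crux concerns the principal leaf. Since the singular locus of $\ZC_a'(n)$ is the union of the $\overline{\SC_r'(n)}$ for $r\ge 2$, which has codimension $\ge 4$, the leaf $\SC_0'(n)$ is the smooth locus of $\ZC_a'(n)$. By Proposition~\ref{prop:bd}, $\SC_0'(n)$ is $\g_n$-saturated and $\g_n(\SC_0'(n)) = \SC_0^0(n)\cup\SC_1^0(n)$, so $\SC_0'(n)/\langle t\rangle$ identifies with this open subset of $\ZC_{a,0}(n)$; its singular locus, inherited from the singular locus $\overline{\SC_1^0(n)}$ of $\ZC_{a,0}(n)$ (the complement of the principal leaf), is precisely the codimension-$2$ subvariety $\SC_1^0(n)$. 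Because $t$ acts on the smooth symplectic manifold $\SC_0'(n)$ by symplectomorphisms, $\SC_0'(n)^t$ is a symplectic submanifold, hence of even codimension, and in particular it has no codimension-$1$ component. The local Chevalley--Shephard--Todd criterion then identifies $\g_n(\SC_0'(n)^t)$ with the singular locus of the quotient, yielding $\g_n(\SC_0'(n)^t) = \SC_1^0(n)$. Combining this with the injectivity of $\g_n$ on the fixed locus (fibres are $\langle t\rangle$-orbits) and the bound $|\g_n^{-1}(y)|\le 2$, a short fibrewise check forces $\SC_0'(n)^t = \g_n^{-1}(\SC_1^0(n))$.

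It follows that $t$ acts freely on $\g_n^{-1}(\SC_0^0(n))=\SC_0'(n)\setminus\g_n^{-1}(\SC_1^0(n))$, so $\g_n(\ZC_a'(n)^t)\cap\SC_0^0(n)=\vide$. On the other hand $\g_n(\ZC_a'(n)^t)$ is closed in $\ZC_{a,0}(n)$ (image of a closed set under the finite morphism $\g_n$) and contains $\SC_1^0(n)$, hence also contains its closure $\overline{\SC_1^0(n)} = \ZC_{a,0}(n)\setminus\SC_0^0(n)$. Both inclusions combine to give $\g_n(\ZC_a'(n)^t)=\overline{\SC_1^0(n)}$ and thus $\ZC_a'(n)^t=\g_n^{-1}(\overline{\SC_1^0(n)})$. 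The ``in particular'' claim is then immediate from Proposition~\ref{prop:bd}: for $r\ge 2$ with $r^2\le n$, $\SC_r'(n)=\g_n^{-1}(\SC_r^0(n))\subseteq\g_n^{-1}(\overline{\SC_1^0(n)})=\ZC_a'(n)^t$, so $t$ acts trivially on $\SC_r'(n)$.

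The most delicate step will be the Chevalley--Shephard--Todd identification matching the codimension-$2$ singular locus of $\SC_0'(n)/\langle t\rangle$ with the image of the fixed locus of $t$: the symplectic nature of the $t$-action is exactly what rules out codimension-$1$ fixed components, which would otherwise contribute smooth points to the quotient and break the identification.
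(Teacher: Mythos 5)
Your argument diverges from the paper at the heart of the computation, in an interesting but ultimately not quite self-contained way. Where the paper deduces $\SC_0'(n)^t=\g_n^{-1}(\SC_1^0(n))$ by invoking Proposition~\ref{prop:leaves-lisse} to read off the leaf stratification of the quotient $\SC_0'(n)/\langle t\rangle$ by stabilizer type, you instead use a Chevalley--Shephard--Todd argument: the fixed locus of a symplectic involution on the smooth leaf $\SC_0'(n)$ has even codimension, so no component is a reflecting hypersurface, so the singular locus of $\SC_0'(n)/\langle t\rangle$ coincides with the image of $\SC_0'(n)^t$. That route is valid (and, together with your observation that $\g_n$-fibres are $\langle t\rangle$-orbits of size $1$ or $2$, it does recover $\SC_0'(n)^t=\g_n^{-1}(\SC_1^0(n))$). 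The paper's use of Proposition~\ref{prop:leaves-lisse} is essentially the Poisson-theoretic packaging of the same phenomenon, so the two derivations of this intermediate step are genuinely parallel.

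The real gap is the asserted identity $\overline{\SC_1^0(n)}=\ZC_{a,0}(n)\setminus\SC_0^0(n)$, which you state parenthetically as if it were immediate. It is not: what is automatic is that the singular locus of $\ZC_{a,0}(n)$ is the complement of the open leaf $\SC_0^0(n)$, a union of the leaves $\SC_r^0(n)$ for $r\geqslant 1$. To say this union is the closure of the \emph{single} leaf $\SC_1^0(n)$, you must know the closure relations among type-$B$ leaves, namely that $\SC_r^0(n)\subseteq\overline{\SC_1^0(n)}$ for every $r\geqslant 1$. This is precisely where the paper cites \cite[Lem.~6.5]{bellamy thiel}, and nothing in Proposition~\ref{prop:bd} or in your own intermediate steps supplies it. Without it you only obtain the sandwich $\overline{\SC_1^0(n)}\subseteq\g_n(\ZC_a'(n)^t)\subseteq\ZC_{a,0}(n)\setminus\SC_0^0(n)$, which is strictly weaker than the corollary; the same citation is also needed for your final ``in particular'' sentence, since it too rests on $\SC_r^0(n)\subseteq\overline{\SC_1^0(n)}$. (A minor and immaterial slip: the singular locus of $\ZC_a'(n)$ has codimension $\geqslant 8$, not $\geqslant 4$, since $\dim\SC_2'(n)=2(n-4)$; your argument only needs codimension $\geqslant 2$ anyway.)
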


\smallskip

\begin{proof}
First, $t$ does not act trivially on $\ZC_a'(n)$ so it does not act trivially 
on the open leaf $\SC_0'(n)$. Since $\SC_0'(n)$ is smooth 
and symplectic, the description of the symplectic leaves of $\SC_0'(n)/\langle t \rangle$ 
is given by Proposition~\ref{prop:leaves-lisse}. But 
$\SC_0'(n)/\langle t \rangle=\SC_0^0(n) \cup \SC_1^0(n)$ by Proposition~\ref{prop:bd}. 
Comparing both descriptions shows that $t$ acts freely on $\SC_0^0(n)$ and 
trivially on $\SC_1^0(n)$. 

Therefore, $t$ acts trivially on the closure of $\g_n^{-1}(\SC_1^0(n))$ and freely on 
$\SC_0^0(n)$. 
But the closure of $\SC_1^0(n)$ is the union of the $\SC_r^0(n)$ for $r \ge 1$ 
(see~\cite[Lem.~6.5]{bellamy thiel}). So the corollary follows now directly from 
Proposition~\ref{prop:bd}.
\end{proof}

\medskip

Assume now that $4 \le r^2 \le n$. Then Corollary~\ref{coro:bd} shows that 
$\SC_r'(n) \simeq \SC_r^0(n)$. Moreover, $\Nrmov_{W_n'}(W_{r^2}') \simeq W_{n-r^2}$. 
So Theorem~\ref{theo:bellamy} shows the following result:

\medskip

\begin{coro}\label{coro:D}
Conjecture~B holds if $W$ is a Coxeter group of type $D_n$ and $\t\in \{\Id_V,t\}$.
\end{coro}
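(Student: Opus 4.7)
\emph{Plan.} The strategy is to handle the two cases $\t = \Id_V$ and $\t = t$ separately, reducing each to the Bellamy--Schedler Theorem~\ref{theo:bellamy} combined with Proposition~\ref{prop:bd} and Corollary~\ref{coro:bd}.

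For $\t = \Id_V$, Conjecture~B is trivial on the open leaf $\SC_0'(n)$, whose closure is $\ZC_a'(n)$ itself. For each integer $r \geq 2$ with $r^2 \leq n$, Corollary~\ref{coro:bd} asserts that $t$ acts trivially on the smooth symplectic leaf $\SC_r'(n)$; combined with Proposition~\ref{prop:bd} and the finiteness of $\g_n$, the restriction $\g_n|_{\SC_r'(n)}$ is a finite bijective morphism of smooth varieties $\SC_r'(n) \longiso \SC_r^0(n)$, hence a Poisson isomorphism. Since the same triviality of the $t$-action extends to the closure (the fixed locus being closed), $\g_n$ identifies the two normalisations, so Theorem~\ref{theo:bellamy} yields $\overline{\SC_r'(n)}^\nor \simeq \ZC_{a,2ra}(n-r^2)$. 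As $\Nrmov_{W_n'}(W_{r^2}') \simeq W_{n-r^2}$ is a Coxeter group of type $B_{n-r^2}$ acting on $V^{W_{r^2}'}$, this target is exactly the right-hand side $\ZC_l(V^{W_{r^2}'}, W_{n-r^2})$ of Conjecture~B with parameter $l = (a, 2ra)$.

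For $\t = t$, the central identification is $\ZC_a'(n)^t = \overline{\SC_1^0(n)}$ as reduced closed Poisson subvarieties of $\ZC_{a,0}(n) = \ZC_a'(n)/\langle t \rangle$: set-theoretically this follows from Corollary~\ref{coro:bd}, and both sides acquire their Poisson structure from $\ZC_{a,0}(n)$ via Remark~\ref{rem:poisson variety}. Lemma~\ref{lem:ferme-leaves} then shows that the symplectic leaves of $\ZC_a'(n)^t$ are exactly the leaves $\SC_r^0(n)$ of $\ZC_{a,0}(n)$ with $r \geq 1$ and $r^2 \leq n$, whose closures are normalised by $\ZC_{a,2ra}(n-r^2)$ thanks to Theorem~\ref{theo:bellamy}. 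To recognise this as a Calogero--Moser space in Conjecture~B's form, I would associate to each such $r$ the $t$-split parabolic $P_r$ conjugate to $W_{r^2}'$ acting on the first $r^2$ coordinates, so that $V^{P_r} \subset V^t$ and $(V^{P_r})^t = V^{P_r}$ has dimension $n-r^2$. Since conjugation by $t$ on $W_{r^2}$ agrees with inner conjugation by $t_1 \in W_{r^2}$ modulo $W_{r^2}'$, $t$ acts trivially on $\Nrmov_{W_n'}(W_{r^2}') \simeq W_{n-r^2}$; Lemma~\ref{lem:normalisateurs} then gives $\Nrmov_{W_t}((P_r)_t) \simeq W_{n-r^2}$ of type $B_{n-r^2}$ (for $r = 1$ this collapses to $P_1 = \{1\}$ and $\Nrmov_{W_t}(\{1\}) = W_t \simeq W_{n-1}$), and Conjecture~B's predicted target $\ZC_l((V^{P_r})^t, \Nrmov_{W_t}((P_r)_t))$ becomes $\ZC_{a,2ra}(n-r^2)$ for $l = (a, 2ra)$.

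The principal technical hurdle is the Poisson identification $\ZC_a'(n)^t = \overline{\SC_1^0(n)}$ in the second case: beyond the set-theoretic coincidence supplied by Corollary~\ref{coro:bd}, one must trace through both the quotient $(-)/\langle t \rangle$ and the fixed-points functor $(-)^{\langle t \rangle}$ in the setting of Remark~\ref{rem:poisson variety} to confirm that the two lattices of symplectic leaves coincide (not merely the underlying reduced schemes). Once that compatibility and the explicit choice of $P_r$ are in hand, the remaining verifications are routine applications of Lehrer--Springer theory and of the framework developed in Sections~\ref{sec:notation}--\ref{sec:tau-hc}.
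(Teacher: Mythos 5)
Your argument is correct and reaches the same endpoint as the paper, but the route for $\t=t$ is noticeably more pedestrian. The paper's proof is very short: it observes that $t$ is $W_n'$-\emph{regular}, hence $W_n'$-full (Example~\ref{ex:regular}), so that Example~\ref{ex:regular-w} immediately gives $(W_n')_\t=(W_n')^\t\simeq W_{n-1}$, and then the single normalizer computation $\Nrmov_{(W_n')_\t}((W_{r^2}')_\t)=\Nrmov_{W_{n-1}}(W_{r^2-1})\simeq W_{n-r^2}$ inside $W_{n-1}$ combined with Theorem~\ref{theo:bellamy} and Corollary~\ref{coro:bd} finishes the job. You instead reconstruct the $B$--$D$ dictionary by hand: you exhibit the $t$-split parabolic $P_r$ explicitly, argue that conjugation by $t$ is inner modulo $W_{r^2}'$ so that $t$ acts trivially on $\Nrmov_{W_n'}(W_{r^2}')$, and then feed this into Lemma~\ref{lem:normalisateurs}. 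This is perfectly valid and lands at the same normalizer $W_{n-r^2}$, but it bypasses the key conceptual shortcut (regularity) that the paper uses to package the whole Lehrer--Springer side at once. One small remark on your flagged ``principal technical hurdle'': there is in fact no hurdle, because the discussion in \S\ref{sub:poisson-fixed} and Remark~\ref{rem:poisson variety} already shows that $\ZC_a'(n)^t\hookrightarrow \ZC_a'(n)/\langle t\rangle=\ZC_{a,0}(n)$ is a closed \emph{Poisson} immersion (the map $R^G\to R/\sqrt{\rG_G}$ is surjective and Poisson), so together with Corollary~\ref{coro:bd} it is an isomorphism of Poisson varieties onto $\overline{\SC_1^0(n)}$; Lemma~\ref{lem:ferme-leaves} then does identify the leaf lattice, as you anticipated. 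So the proof is complete; it simply trades the paper's structural Lehrer--Springer argument for explicit coordinate bookkeeping.
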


\smallskip

\begin{proof}
For the case $\t=\Id_V$, the work has already been done. For the case where $\t=t$, 
one must notice that $\t$ is $W_n'$-regular so it is $W_n'$-full 
(see Example~\ref{ex:regular}), that $(W_n')_\t=(W_n')^\t \simeq W_{n-1}$ 
(see Example~\ref{ex:regular-w}) and that 
$$\Nrmov_{(W_n')_\t}((W_{r^2}')_\t) = \Nrmov_{W_{n-1}}(W_{r^2-1})\simeq W_{n-r^2}.$$
Then the result follows from Theorem~\ref{theo:bellamy} and Corollary~\ref{coro:bd}. 
\end{proof}

\medskip

\subsection{Dihedral groups at equal parameters}
Let $d$ be a natural number and let $\xi$ denote a primitive $2d$-th root of unity. 
For $j \in \ZM/2d\ZM$, we set 
$$s_j=\begin{pmatrix} 0 & \xi^j \\ \xi^{-j} & 0 \end{pmatrix}$$
We assume in this section, and only in this section, that $W= \langle s_0,s_2\rangle$ is dihedral 
of order $2d$ and that $\t=s_1$: note that $\t^2=\Id_V$, that $\t s_0 \t^{-1} = s_2$, 
that $\t s_2 \t^{-1}=s_0$ and that $\t$ is $W$-full. We set $a=c_k(s_0)$ and, since 
$k$ is $\t$-stable by hypothesis, we have $c_k(s_2)=a$. In other words, 
we are in the equal parameter case studied by the author in~\cite{bonnafe diedral 2}. 
Moreover, in~\cite[\S{4}]{bonnafe diedral 2}, the author determined the structure 
of $\ZC_k^\t$. This gives:

\bigskip

\begin{prop}
If $W$ is dihedral of order $2d$ and if $\t$ is as above, then Conjecture~B holds.
\end{prop}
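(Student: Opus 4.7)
The plan is to invoke Theorem~A to enumerate the symplectic leaves of $\ZC_k^\t$ and then to match each one with the explicit description of $\ZC_k^\t$ already obtained in \cite[\S 4]{bonnafe diedral 2}. First I identify the basic data. Since $V^\t$ is the line fixed by the reflection $\t = s_1 \notin W$, and since the reflections of $W$ are the $s_{2j}$, the line $V^\t$ is not a reflecting hyperplane of $W$. Hence $W_{V^\t}^\ptw = 1$ and $W_\t = W_{V^\t}^\setw$; inspection of the rotation subgroup of $W$ shows that $W_\t = \{\Id,-\Id\}$ if $d$ is even and $W_\t = 1$ if $d$ is odd, acting on the line $V^\t$. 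Next, any proper non-trivial parabolic subgroup of the dihedral group $W$ is of the form $\langle s_{2j}\rangle$, whose reflecting line is distinct from $V^\t$; so the only point of $V^\t$ that it stabilizes is $0$, whose full stabilizer is $W$. Therefore $\para_\t(W) = \{1,W\}$.

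By Theorem~A, $\symp(\ZC_k^\t)$ consists of one $2$-dimensional leaf $\SC_\star$ attached to the pair $(P,p) = (1,\mathrm{pt})$, together with one $0$-dimensional leaf for each $W_\t$-orbit of $\t$-cuspidal points of $\ZC_k(V,W)^\t$ (which correspond to the pair $P=W$, where $P_\t=W_\t$). For each of the $0$-dimensional leaves Conjecture~B is trivial: $(V^W)^\t=0$, $\Nrmov_{W_\t}(W_\t)=1$, and $\ZC_l(0,1)$ is a point for every~$l$.

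The substance of the argument therefore concerns the $2$-dimensional leaf $\SC_\star$, for which $(V^P)^\t = V^\t$ and $\Nrmov_{W_\t}(P_\t) = W_\t$, so that Conjecture~B predicts
$$\overline{\SC_\star}^\nor \simeq \ZC_l(V^\t,W_\t)$$
for some parameter $l$. When $d$ is odd, the right-hand side is $V^\t\times V^{*\t}\simeq\CM^2$, and the task reduces to verifying, directly from the equations of $\ZC_k^\t$ given in \cite[\S 4]{bonnafe diedral 2}, that the irreducible component of $\ZC_k^\t$ containing $\SC_\star$ has normalization the affine plane equipped with its standard symplectic form. When $d$ is even, $\ZC_l(V^\t,W_\t)$ is the Calogero-Moser space of the cyclic group of order~$2$ at parameter~$l$, i.e.\ the affine quadric surface $z^2 = xy + 4l^2$, and one has to identify a value of $l$ (as an explicit function of $a$ and $d$) such that the equations of $\ZC_k^\t$ in \cite[\S 4]{bonnafe diedral 2} realize this normal form after normalization. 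In both cases, $\CM^\times$-equivariance is automatic from the grading defined in~\S\ref{subsub:action}, since the bijection $\overline{\SC_\star}^\nor\simeq\ZC_l(V^\t,W_\t)$ is visibly homogeneous.

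The main obstacle is the matching in the $d$ even case: pinning down the correct value of $l$ and verifying that the resulting isomorphism of affine varieties intertwines the Poisson brackets. The Poisson bracket on $\overline{\SC_\star}^\nor$ is inherited via Theorem~\ref{theo:kaledin} from the bracket on $\ZC_k$ defined by the deformation~\eqref{eq:rels-1}, while the bracket on the right-hand side $\ZC_l(V^\t,W_\t)$ is computed analogously from the rank-one rational Cherednik algebra; matching the two brackets will pin down $l$ uniquely, and this is the computation that finishes the proof once the equations of \cite[\S 4]{bonnafe diedral 2} have been recalled.
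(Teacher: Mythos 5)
Your overall strategy—enumerate the symplectic leaves via Theorem~A and match them against the explicit equations for $\ZC_k^\t$ computed in~\cite[\S{4}]{bonnafe diedral 2}—is exactly the route the paper takes; the paper's proof is simply the citation of that computation. However, your determination of $W_\t$ contains an error that invalidates the odd-$d$ branch of your argument. You computed $W_\t$ only from the \emph{rotation} subgroup and concluded $W_\t=1$ when $d$ is odd, but $W_\t=W_{V^\t}^\setw$ also contains reflections. Indeed, for $d$ odd the reflection $s_{d+1}$ satisfies $s_{d+1}(\xi,1)=(\xi^{d+1},\xi^{-d})=-(\xi,1)$ since $\xi^d=-1$, so $s_{d+1}$ stabilizes the line $V^\t$ and acts on it by $-1$. (Alternatively: $\t$ is $W$-regular, hence $W_\t=W^\t=C_W(\t)$ by Example~\ref{ex:regular-w}, and for $d$ odd one finds $C_W(\t)=\{1,s_{d+1}\}$.) Thus $W_\t$ is cyclic of order~$2$ acting on $V^\t$ by $\pm 1$ for \emph{all} $d$, not only for $d$ even.

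This error propagates. For the two-dimensional leaf $\SC_\star$ attached to $(P,p)=(1,\mathrm{pt})$, you have $\Nrmov_{W_\t}(P_\t)=W_\t$, which always has order~$2$, so Conjecture~B predicts $\overline{\SC_\star}^\nor\simeq\ZC_l(V^\t,W_\t)$ where the right-hand side is the rank-one Calogero--Moser surface $\{z^2=xy+4l^2\}$ in \emph{both} parities of $d$. Your claim that for $d$ odd the right-hand side is $V^\t\times V^{*\t}\simeq\CM^2$ (i.e.\ that the target is the flat plane) is incorrect; even at $l=0$ one would get the singular quadric cone $\CM^2/(\pm 1)$, not $\CM^2$. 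So the verification you outline for $d$ odd would not match: you would be trying to show an isomorphism with the wrong target variety. Once $W_\t$ is corrected, the remainder of your plan—enumerate $\para_\t(W)=\{1,W\}$, dispose of the zero-dimensional leaves trivially, and pin down the parameter $l$ by comparing the quadric equation and Poisson bracket from~\cite[\S{4}]{bonnafe diedral 2} with the normal form of $\ZC_l(\CM,\mub_2)$—is sound and is in substance what the paper's one-line citation is relying on.
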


\bigskip

% \newpage
% 
\def\sectionname{Appendix}\label{appendix}
\renewcommand\thesection{\Alph{section}}
\setcounter{section}{0}

\section{Completion and finite group actions}

\medskip

\boitegrise{{\bf Hypothesis and notation.} {\it 
We fix in this appendix a commutative noetherian $\CM$-algebra $R$, an ideal $I$ 
of $R$, a fintie group $G$ acting on the $\CM$-algebra $R$ and we assume that 
$I$ is $G$-stable. We set $J=\langle I^G \rangle_R$. 
\\
\hphantom{A} Let $\rG_G$ be the ideal of $R$ generated by 
the family $(r-g(r))_{r \in R, g \in G}$ and set $R(G)=R/\sqrt{\rG_G}$. 
We denote by $I(G)$ the image of $I$ in $R(G)$. Note that $R(G)$ is the biggest quotient 
algebra of $R$ which is reduced and on which $G$ acts trivially. \\
\hphantom{A} Finally, 
we denote by $\Rhat_I$ the $I$-adic completion of $R$, i.e.
$$\Rhat_I=\varprojlim_j R/I^j,$$
and by $\iota : R \to \Rhat_I$ the canonical map.}}{0.75\textwidth}

\medskip

The results of this Appendix do not pretend to any originality, 
and might certainly be written in greater generality. We nevertheless 
do not find appropriate references containing all of them, and decided 
to state them in terms which are suitable for our purpose.

\bigskip

\begin{lem}\label{lem:app-fixe}
There exists an integer $m$ such that $I^m \subset J$.
\end{lem}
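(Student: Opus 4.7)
The plan is to produce, for every element $r \in I$, an explicit integral relation over $I^G$, then exploit noetherianity to turn a pointwise statement into the uniform statement $I^m \subset J$.

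First I would set $n = |G|$ and, for each $r \in I$, consider the ``characteristic polynomial''
\[
P_r(X) = \prod_{g \in G} (X - g(r)) = X^n + \sum_{k=1}^{n} (-1)^k e_k(r) X^{n-k},
\]
whose coefficients $e_k(r)$ are the elementary symmetric polynomials in the family $(g(r))_{g \in G}$. Since $I$ is $G$-stable, each $g(r)$ lies in $I$, hence $e_k(r) \in I^k$ for all $k \geq 1$. Being symmetric in the $g(r)$'s, each $e_k(r)$ is also $G$-invariant, so $e_k(r) \in I \cap R^G = I^G$ for $k \geq 1$. Evaluating $P_r$ at $X = r$ gives $P_r(r) = 0$, whence
\[
r^n = \sum_{k=1}^{n} (-1)^{k+1} e_k(r)\, r^{n-k} \in R \cdot I^G = J.
\]

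Now I would invoke the noetherian hypothesis: $I$ is finitely generated, say $I = (r_1, \ldots, r_q)R$. The ideal $I^N$ is generated by monomials $r_{i_1} r_{i_2} \cdots r_{i_N}$. As soon as $N \geq q(n-1)+1$, the pigeonhole principle forces some index $i$ to appear at least $n$ times in such a monomial, so the monomial is divisible by $r_i^n \in J$ and therefore lies in $J$. Taking $m = q(n-1)+1$ yields $I^m \subset J$, as required.

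There is no real obstacle here — everything rests on the standard ``product-over-$G$'' trick made available by the $G$-stability of $I$, combined with finite generation of $I$. The only minor point to be careful about is the bookkeeping that $e_k(r)$ really lies in $I^k$ (and not merely in $I$), which is what allows us to conclude $e_k(r) \in I^G$ and hence $r^n \in J$ rather than just $r \in \sqrt{J}$; but once this is noted, the combinatorial pigeonhole argument gives an explicit bound for $m$.
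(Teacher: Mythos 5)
Your proof is correct, and it takes a genuinely different route from the paper's. The paper shows $I \subset \sqrt{J}$ by a localization-free argument with prime ideals: for any prime $\pG \supset I^G$ and any $r \in I$, the product $\prod_{g \in G} g(r)$ lies in $I^G \subset \pG$, so some $g(r)\in\pG$, whence $I \subset \bigcup_g g(\pG)$; prime avoidance then gives $I\subset g(\pG)$ for a single $g$, and $G$-stability of $I$ forces $I\subset\pG$. Having $I\subset\sqrt{J}$, the paper invokes noetherianity (citing Levitsky's theorem) to get $I^m\subset J$ for some unspecified $m$. Your argument instead produces the explicit integral relation $r^{|G|}\in J$ for every $r\in I$ by expanding $\prod_{g}(X-g(r))$ at $X=r$, and then bootstraps to a uniform bound $m = q(|G|-1)+1$ via pigeonhole on the $q$ generators of $I$. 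This is more elementary (no prime avoidance, no appeal to the radical-power theorem) and yields an explicit $m$, which the paper's proof does not.

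One small remark: your parenthetical that the refinement $e_k(r)\in I^k$ "is what allows us to conclude $e_k(r)\in I^G$" overstates the point. To place $e_k(r)$ in $I^G$ you only need $e_k(r)\in I$ together with $G$-invariance, and the former already holds since each $e_k(r)$ (for $k\geq 1$) is a sum of products of elements of the $G$-stable ideal $I$. The sharper inclusion $e_k(r)\in I^k$ is true but not used anywhere. This does not affect correctness.
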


\bigskip

\begin{proof}
Let $\pG$ be a prime ideal of $R$ containing $I^G$. We first wish to prove tha 
$\pG$ contains $I$. For this, let $r \in I$. Then $\prod_{g \in G} g(r) \in I^G$, 
and so there exists $g_r \in G$ such that $g_r(r) \in \pG$ because $\pG$ is prime. 
This shows that $I \subset \cup_{g \in G} g(\pG)$. By the Prime Avoidance Lemma, 
we get that there exists $g \in G$ such that $I \subset g(\pG)$. 
Since moreover $I$ is $G$-stable, we get that $I \subset \pG$. In other 
words, $I$ is contained in any prime ideal containing $J$. So $I \subset \sqrt{J}$. 
As $R$ is noetherian, the result follows from Levitsky's Theorem~\cite[Theo.~10.30]{lam}.
\end{proof}

\bigskip

\begin{lem}\label{lem:J-fixed}
Let $j \ge 0$. Then $(J^j)^G=(I^G)^j$.
\end{lem}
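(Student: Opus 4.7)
The plan is to prove the two inclusions separately by elementary means, the key tool being the Reynolds averaging operator
$$e_G = \frac{1}{|G|}\sum_{g \in G} g,$$
which is available because $G$ is finite and $R$ is a $\CM$-algebra.

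First, for the inclusion $(I^G)^j \subset (J^j)^G$, I would observe that any $j$-fold product $\alpha = a_1 \cdots a_j$ with $a_i \in I^G$ lies in $J^j$ (since $I^G \subset J$) and is $G$-invariant (since each $a_i$ is, and $G$ acts by algebra automorphisms). Moreover, as $I^G$ is an ideal of $R^G$, one has $R^G \cdot I^G \subset I^G$, so $(I^G)^j$ coincides with the additive subgroup of $R^G$ generated by such products $\alpha$. This immediately yields the easy inclusion.

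For the reverse inclusion $(J^j)^G \subset (I^G)^j$, I would proceed as follows. Since $J = R \cdot I^G$, the ideal $J^j$ is the $R$-submodule of $R$ generated by the $j$-fold products $\alpha = a_1 \cdots a_j$ with $a_i \in I^G$. Given $x \in (J^j)^G$, write $x = \sum_i r_i \alpha_i$ accordingly, with $r_i \in R$ and each $\alpha_i$ such a product. Applying $e_G$ and using the $G$-invariance of $x$ and of each $\alpha_i$, I obtain
$$x \;=\; e_G(x) \;=\; \sum_i e_G(r_i)\, \alpha_i,$$
with $e_G(r_i) \in R^G$. Since $(I^G)^j$ is an ideal of $R^G$ containing each $\alpha_i$, each summand lies in $(I^G)^j$, and thus so does $x$.

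The argument presents no genuine obstacle; the only point to notice is that a generating set for $J^j$ as an $R$-module can already be chosen inside $(I^G)^j$, after which the Reynolds operator absorbs the $R$-coefficients into $R^G$ automatically.
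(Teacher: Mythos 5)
Your proof is correct and follows essentially the same route as the paper: both establish the nontrivial inclusion by writing an element of $(J^j)^G$ as an $R$-linear combination of $j$-fold products of elements of $I^G$ and then applying the averaging (Reynolds) operator to push the coefficients into $R^G$. The only difference is cosmetic — you phrase the averaging via the idempotent $e_G$, while the paper writes out $\frac{1}{|G|}\sum_{g\in G} g(\cdot)$ directly.
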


\bigskip

\begin{proof}
The inclusion $(I^G)^j \subset (J^j)^G$ is obvious. Conversely, 
let $r \in (J^j)^G$. Then there exists a finite set $E$, a family 
$(r_e)_{e \in E}$ of elements of $R$ and a family $(i_e^{(1)},\dots,i_e^{(j)})$ 
of $j$-uples of elements of $I^G$ such that
$$r=\sum_{e \in E} r_e i_e^{(1)}\cdots i_e^{(j)}.$$
Since $r$ is $G$-invariant, we have $r=(1/|G|)\sum_{g \in G} g(r)$, so 
$$r=\sum_{e \in E} \Bigl(\frac{1}{|G|}\sum_{g \in G} g(r_e)\Bigr) i_e^{(1)}\cdots i_e^{(j)}.$$
Hence, $r \in (I^G)^j$.
\end{proof}

\bigskip

Since $I$ and $J$ are $G$-stable, the completions $\Rhat_I$ and $\Rhat_J$ inherit 
a $G$-action.

\bigskip

\begin{coro}\label{coro:completion-G}
The $\CM$-algebras $(\Rhat_I)^G$ and $\widehat{(R^G)}_{I^G}$ are canonically 
isomorphic.
\end{coro}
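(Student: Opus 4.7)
The plan is to combine Lemmas A.1 and A.2 with two general categorical facts: that $I$-adic completion depends only on the topology defined by $I$, and that $G$-invariants commute with inverse limits and with quotients by $G$-stable ideals when $|G|$ is invertible.

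First I would observe that the filtrations $(I^j)_{j \ge 0}$ and $(J^j)_{j \ge 0}$ of $R$ are cofinal. Indeed, $J \subset I$ since $I^G \subset I$ and $I$ is an ideal, so $J^j \subset I^j$ for every $j$. Conversely, Lemma~\ref{lem:app-fixe} provides an integer $m$ with $I^m \subset J$, whence $I^{mj} \subset J^j$ for every $j$. Consequently the natural map induces a $G$-equivariant isomorphism of topological rings
$$\Rhat_I \;=\; \varprojlim_j R/I^j \;\longiso\; \varprojlim_j R/J^j \;=\; \Rhat_J.$$

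Next I would show that there is a canonical isomorphism $(\Rhat_J)^G \simeq \varprojlim_j (R/J^j)^G$. This is formal: the functor of $G$-invariants is the equalizer of the identity and the $G$-action, and equalizers commute with inverse limits. Then, for each $j$, since $J^j$ is $G$-stable and $|G|$ is invertible in $\CM$, the averaging map $r \mapsto |G|^{-1}\sum_{g\in G} g(r)$ is a $R^G$-linear retraction showing that $R^G \to (R/J^j)^G$ is surjective with kernel $(J^j)^G$, so that
$$(R/J^j)^G \;=\; R^G/(J^j)^G.$$
By Lemma~\ref{lem:J-fixed}, $(J^j)^G = (I^G)^j$, so $(R/J^j)^G = R^G/(I^G)^j$.

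Putting these together, I would conclude
$$(\Rhat_I)^G \;\simeq\; (\Rhat_J)^G \;\simeq\; \varprojlim_j (R/J^j)^G \;\simeq\; \varprojlim_j R^G/(I^G)^j \;=\; \widehat{(R^G)}_{I^G},$$
each isomorphism being canonical and all being $\CM$-algebra homomorphisms. The main (minor) obstacle is just bookkeeping: verifying that all the identifications are compatible, in particular that the composition agrees with the obvious map $R^G \to (\Rhat_I)^G$ induced by $\iota$ after passing to completions; this is immediate by chasing the construction through each $j$-th level of the inverse system. No deeper input is needed beyond Lemmas~\ref{lem:app-fixe} and~\ref{lem:J-fixed}.
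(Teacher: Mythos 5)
Your proof is correct and follows essentially the same route as the paper's: pass from $\Rhat_I$ to $\Rhat_J$ via cofinality (Lemma~\ref{lem:app-fixe}), take $G$-invariants (which commute with inverse limits), and identify $(R/J^j)^G = R^G/(J^j)^G = R^G/(I^G)^j$ using the averaging idempotent and Lemma~\ref{lem:J-fixed}. You merely make the inverse-limit step more explicit than the paper does.
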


\begin{proof}
As $I^m \subset J \subset I$ for some $m$ by Lemma~\ref{lem:app-fixe}, 
the completions $\Rhat_I$ and $\Rhat_J$ 
are canonically isomorphic, and the isomorphism is $G$-equivariant. This gives 
an isomorphism $(\Rhat_I)^G \simeq (\Rhat_J)^G$. So the result follows directly 
from Lemma~\ref{lem:J-fixed}, because $(R/J^j)^G=R^G/(J^j)^G$ since we 
work in characteristic zero.
\end{proof}

\bigskip

\begin{prop}\label{prop:completion-fixe}
Assume that $R$ is Nagata. Then 
$$\widehat{R(G)}_I=\Rhat_I(G).$$
\end{prop}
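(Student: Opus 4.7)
The plan is to reduce the proposition to an equality of two ideals inside $\Rhat_I$ and then to exploit the Nagata hypothesis to control radicals under completion. First, since $R$ is noetherian, $I$-adic completion commutes with passage to the quotient by any ideal of $R$: applied to the surjection $R \surto R(G)=R/\sqrt{\rG_G}$, and understanding the completion on the left with respect to the image $I(G)$ of $I$ in $R(G)$, this yields a canonical isomorphism
\[
\widehat{R(G)}_I \;\longiso\; \Rhat_I\big/\sqrt{\rG_G}\,\Rhat_I.
\]

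Next, I must identify the ideal of $\Rhat_I$ generated by the family $(\hat r - g(\hat r))_{\hat r \in \Rhat_I,\,g\in G}$ (which is the counterpart of $\rG_G$ for the ring $\Rhat_I$) with the extension $\rG_G\,\Rhat_I$. The inclusion $\rG_G\Rhat_I$ into this ideal is trivial. For the reverse, note that $\rG_G\Rhat_I$ is closed in $\Rhat_I$ for the $I\Rhat_I$-adic topology: indeed, by the previous step $\Rhat_I/\rG_G\Rhat_I \simeq \widehat{R/\rG_G}_I$ is itself a completion, hence $I\Rhat_I$-adically separated. Since the $G$-action on $\Rhat_I$ is continuous (because $I$ is $G$-stable), every $\hat r \in \Rhat_I$ may be written as an $I\Rhat_I$-adic limit $\hat r = \lim_n r_n$ with $r_n \in R$, and the difference $\hat r - g(\hat r) = \lim_n (r_n - g(r_n))$ lies in the closure of $\rG_G\Rhat_I$, hence in $\rG_G\Rhat_I$ itself. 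Therefore $\Rhat_I(G) = \Rhat_I/\sqrt{\rG_G\,\Rhat_I}$.

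The proposition is thus equivalent to the ideal identity $\sqrt{\rG_G}\,\Rhat_I = \sqrt{\rG_G\,\Rhat_I}$ inside $\Rhat_I$. The inclusion $\subset$ is elementary: as $R$ is noetherian, $\sqrt{\rG_G}$ is finitely generated, so some power $\sqrt{\rG_G}^N$ lies in $\rG_G$, and hence $(\sqrt{\rG_G}\,\Rhat_I)^N \subset \rG_G\,\Rhat_I$. The reverse inclusion is where the Nagata hypothesis enters decisively. By construction $R(G) = R/\sqrt{\rG_G}$ is reduced, and a quotient of a Nagata ring is again Nagata; hence $R(G)$ is a reduced Nagata ring. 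By the analytic unramifiedness property of Nagata rings, the $I$-adic completion of a reduced Nagata ring is reduced, so $\widehat{R(G)}_I \simeq \Rhat_I/\sqrt{\rG_G}\,\Rhat_I$ is reduced. This means $\sqrt{\rG_G}\,\Rhat_I$ is a radical ideal of $\Rhat_I$, and since it contains $\rG_G\,\Rhat_I$, it contains $\sqrt{\rG_G\,\Rhat_I}$ as well, completing the proof.

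The principal obstacle, and essentially the only place where the Nagata hypothesis is really used, is the preservation of reducedness under $I$-adic completion in the last step; for merely noetherian reduced rings this can fail (classical counterexamples of Nagata himself), and being Nagata is precisely what is needed here. The identification of the $\rG_G$-ideal of $\Rhat_I$ with $\rG_G\,\Rhat_I$, while technical, is a standard closure argument resting only on the commutation of quotients with $I$-adic completion in the noetherian setting.
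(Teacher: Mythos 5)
Your proof is correct and follows essentially the same route as the paper's: reduce to an ideal-theoretic statement in $\Rhat_I$ via the fact that $I$-adic completion commutes with quotients over a noetherian ring, identify the $\rG_G$-ideal of $\Rhat_I$ with $\rG_G\,\Rhat_I$, and invoke the Nagata hypothesis to commute the radical with completion. Two minor differences: for the identification of the $\rG_G$-ideal you use a topological closure argument, whereas the paper observes directly that $G$ acts trivially on $\Rhat_I/\rG_G\Rhat_I \simeq \widehat{(R/\rG_G)}_I$; and for the Nagata step you derive $\sqrt{\rG_G}\,\Rhat_I = \sqrt{\rG_G\,\Rhat_I}$ from ``completions of reduced Nagata rings are reduced,'' whereas the paper cites the equivalent formulation $\sqrt{\rGh_G} = \widehat{\sqrt{\rG_G}}$ from Greco--Salmon directly.
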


\bigskip
\def\rGh{\hat{\rG}}

\begin{proof}
Let $\rGh_G$ denote the completion of $\rG_G$ at $I$. Since $R$ is noetherian, 
$\rGh_G$ is the ideal of $\Rhat_I$ generated by $\rG_G$ and 
$$\widehat{(R/\rG_G)}_I = \Rhat_I/\rGh_G$$
(see for instance~\cite[\S{4}]{greco}). 
This shows that $G$ acts trivially on $\Rhat_I/\rGh_G$ and so 
$\rGh_G$ is the ideal of $\Rhat_I$ generated by $(g(r)-r)_{r \in \Rhat_I, g \in G}$. 

Moreover, as $R$ is Nagata, we have that $\sqrt{\rGh_G}=\widehat{\sqrt{\rG_G}}$ 
by~\cite[Coro.~14.8]{greco}. The proposition follows.
\end{proof}

\bigskip

\begin{exemple}\label{ex:nagata}
Assume that $R$ is a localization or a completion of a finitely generated algebra. 
Then $R$ is Nagata.\finl
\end{exemple}


\begin{thebibliography}{AAA}
% \bibitem[AlFo]{alev} {\sc J. Alev \& L. Foissy}, 
% Le groupe des traces de Poisson de certaines alg\`ebres d'invariants, 
% {\it Comm. Algebra} {\bf 37} (2009), 368-388.  

% \bibitem[Ari]{ariki} {\sc S. Ariki}, 
% {\it Representation theory of a Hecke algebra of $G(r, p, n)$}, 
% J. Algebra {\bf 177} (1995), 164=185.
% 
% \bibitem[ArKo]{ariki-koike} {\sc S. Ariki \& K. Koike}, 
% {\it A Hecke algebra of $(\ZM /r \RM ) \rtimes \SG_n$ 
% and construction of its irreducible representations}, 
% Adv. in Math. {\bf 106} (1994), 216-243.
% % 
% \bibitem[Bea]{beauville} {\sc A. Beauville}, {\it Symplectic singularities}, 
% Invent. Math. {\bf 139} (2000), 541-549.
% % 
% \bibitem[Bel1]{bellamy these} {\sc G. Bellamy}, 
% {\it Generalized Calogero-Moser spaces and rational 
% Cherednik algebras}, PhD thesis, 
% University of Edinburgh, 2010.

% \bibitem[Bel2]{bellamy g4} 
% {\sc G. Bellamy}, On singular Calogero-Moser spaces, 
% {\it Bull. of the London Math. Soc.} {\bf 41} (2009), 315-326.
% 
% \bibitem[Bel3]{bellamy factorisation} 
% {\sc G. Bellamy}, Factorization in generalized Calogero-Moser spaces, 
% {\it J. Algebra} {\bf 321} (2009), 338-344.

\bibitem[Bel1]{bellamy cuspidal}
{\sc G. Bellamy}, {\it Cuspidal representations of rational Cherednik algebras at $t=0$}, 
Math. Z. {\bf 269} (2011), 609-627.

\bibitem[Bel2]{bellamy counting} {\sc G. Bellamy}, 
{\it Counting resolutions of symplectic quotient singularities}, 
Compos. Math. {\bf 152} (2016), 99-114. 

% \bibitem[BeScTh]{BST} {\sc G. Bellamy, T. Schedler \& U. Thiel}, 
% {\it Hyperplane arrangements associated to symplectic quotient singularities}, 
% Phenomenological approach to algebraic geometry, 25-45, Banach Center Publ., 
% {\bf 116}, Polish Acad. Sci. Inst. Math., Warsaw, 2018.
% 

\bibitem[BeCh]{be-ch} {\sc G. Bellamy \& O. Chalykh}, 
{\it Motivic invariants of Calogero-Moser spaces}, 
in preparation.

\bibitem[BMS]{be-sc} {\sc G. Bellamy, R. Maksimau \& T. Schedler}, in preparation.

\bibitem[BeTh]{bellamy thiel} {\sc G. Bellamy \& U. Thiel}, 
{\it Cuspidal Calogero-Moser and Lusztig families for Coxeter groups}, 
J. Algebra {\bf 462} (2016), 197-252.

% \bibitem[BEG]{berest} {\sc Y. Berest, P. Etingof and V. Ginzburg}, 
% Cherednik algebras and differential operators on quasi-invariants, 
% {\it Duke Math.J.} {\bf 18} (2003), 279-337.

% \bibitem[Bon1]{bonnafe bilatere}
% {\sc C.\ Bonnaf\'e}, {\it Two-sided cells in type $B$ (asymptotic case)}, 
% J. Algebra {\bf 304} (2006), 216--236.
% 
% \bibitem[Bon2]{bonnafe kl} {\sc C. Bonnaf\'e}, {\it Kazhdan-Lusztig cells with 
% unequal parameters}, Algebra and Applications {\bf 24}, Springer, Cham, 2017, 
% xxv+348 pp.
% 
% \bibitem[Bon3]{bonnafe diedral} {\sc C. Bonnaf\'e}, 
% {\it On the Calogero-Moser space associated with dihedral groups}, 
% Ann. Math. Blaise Pascal {\bf 25} (2018), 265-298. 

\bibitem[Bon1]{bonnafe diedral 2} {\sc C. Bonnaf\'e}, 
{\it On the Calogero-Moser space associated with dihedral groups II: the equal parameter case}, 
preprint (2021), {\tt arXiv:2112.12401}.


\bibitem[Bon2]{cm-unip} {\sc C. Bonnaf\'e}, 
{\it Calogero-Moser spaces vs unipotent representations}, 
preprint (2021), {\tt arXiv:2112.13684}.

% \bibitem[BoIa]{bonnafe iancu} {\sc C.\ Bonnaf\'e \& L.\ Iancu},
% {\it Left cells in type $B_n$ with unequal parameters}, 
% Represent. Theory {\bf 7} (2003), 587-609.
% 

\bibitem[BoMa]{bonnafe maksimau} {\sc C. Bonnaf\'e \& R. Maksimau}
{\it Fixed points in smooth Calogero-Moser spaces}, 
Ann. Inst. Fourier, {\bf 71} (2021), 643-678. .

\bibitem[BoRo]{calogero} {\sc C. Bonnaf\'e \& R. Rouquier}, 
{\it Cherednik algebras and Calogero-Moser cells}, preprint (2017), {\tt arXiv:1708.09764}.

% \bibitem[BoSh]{bonnafe shan} {\sc C. Bonnaf\'e \& P. Shan},
% {\it On the Cohomology of Calogero-Moser Spaces}, 
% Int. Math. Res. Not. (2020), 1091-1111.
% 
% 
\bibitem[BoTh]{bonnafe thiel} {\sc C. Bonnaf\'e \& U. Thiel}, 
{\it Computational aspects of Calogero-Moser spaces}, 
preprint (2021), {\tt arXiv:2112.15495}.

% \bibitem[Bou]{bourbaki} {\sc N. Bourbaki}, {\it Alg\`ebre commutative, Chapitres 1-4}, 
% Masson, Paris, 1985.

\bibitem[BrGo]{BG} {\sc K. Brown \& I. Gordon}, 
{\it Poisson orders, symplectic reflection algebras and representation theory}, 
J. Reine Angew. Math. {\bf 559} (2003), 193-216.

% \bibitem[De]{dezelee} {\sc C. Dez\'el\'ee}, 
% Repr\'esentations de dimension finie de l'alg\`ebre de Cherednik rationnelle, 
% {\it Bull. Soc. Math. France} {\bf 131} (2003), 465-482.
% 
% \bibitem[DiMo]{dixon} {\sc J. D. Dixon \& B. Mortimer}, 
% {\it Permutation groups}, Graduate Texts in Mathematics {\bf 163}, 1996, Springer-Verlag.
% \bibitem[Bro2]{broue} {\sc M. Brou\'e}, 
% {\it Introduction to complex reflection groups and their braid groups}, 
% Lecture Notes in Mathematics {\bf 1988}, 2010, Springer.
% 
% \bibitem[BrKi]{broue kim} {\sc M. Brou\'e \& S. Kim}, 
% {\it Familles de caract\`eres des alg\`ebres de Hecke cyclotomiques}, 
% Adv. in Math. {\bf 172} (2002), 53-136. 

\bibitem[BMM]{BMM} {\sc M. Brou\'e, G. Malle \& J. Michel}, 
{\it Generic blocks of finite reductive groups}, 
in {\it Repr\'esentations unipotentes g\'en\'eriques et blocs des groupes r\'eductifs finis}, 
Ast\'erisque {\bf 212} (1993), 7-92.

% \bibitem[BMM2]{spetses} {\sc M. Brou\'e, G. Malle \& J. Michel}, 
% {\it Split spetses for primitive reflection groups}, 
% Ast\'erisque {\bf 359} (2014), vi+146 pp.

% \bibitem[BrMaRo]{BMR} {\sc M. Brou\'e, G. Malle \& R. Rouquier}, 
% {\it Complex reflection groups, braid groups, Hecke algebras}, 
% J. Reine Angew. Math. {\bf 500} (1998), 127-190.

% \bibitem[BrMi]{broue-michel} {\sc M. Brou\'e \& J. Michel}, 
% {\it Sur certains \'el\'ements r\'eguliers des groupes de Weyl et les vari\'et\'es 
% de Deligne-Lusztig associ\'ees}, in Finite reductive groups (Luminy, 1994), 73-139,
% Progr. Math. {\bf 141}, Birkh\"auser Boston, Boston, MA, 1997. 

\bibitem[BrGo]{brown gordon} {\sc K. A. Brown \& I. Gordon}, 
{\it Poisson orders, symplectic reflection algebras and representation theory}, 
J. Reine Angew. Math. {\bf 559} (2003), 193-216.

% \bibitem[CaEn]{CE} {\sc M. Cabanes \& M. Enguehard}, 
% {\it Representation theory of finite reductive groups}, 
% New Mathematical Monographs {\bf 1}, 
% Cambridge University Press, Cambridge, 2004. xviii+436 pp.
% 
% \bibitem[Cha1]{chavli1} {\sc E. Chavli}, 
% {\it The BMR freeness conjecture for exceptional groups of rank $2$}, 
% doctoral thesis, Univ. Paris Diderot (Paris 7), 2016.
% 
% \bibitem[Cha2]{chavli2} {\sc E. Chavli}, 
% {\it The BMR freeness conjecture for the first two families 
% of the exceptional groups of rank $2$},
% Comptes Rendus Math\'ematiques {\bf 355} (2017), 1-4.
% 
% \bibitem[Cha3]{chavli3} {\sc E. Chavli}, 
% {\it The BMR freeness conjecture for the tetrahedral and octahedral family}, 
% Comm. Algebra {\bf 46} (2018), 386-464.
% 
% \bibitem[DiMi1]{dmbook} F.~Digne and J.~Michel, 
% 	{\it Representations of finite groups of Lie type},
% 	London Math. Soc. Student Texts {\bf 21}, 1991, 
% 	Cambridge University Press, iv + 159 pp.
% 	
\bibitem[Dix]{dixmier} {\sc J. Dixmier}, 
{\it Enveloping algebras}, 
North-Holland Math. Lib. {\bf 14}, North-Holland Publishing Co., 
Amsterdam-New York-Oxford, 1977, xvi+375 pp.

% \bibitem[Eis]{eisenbud} {\sc D. Eisenbud}, 
% {\it Commutative algebra with a view towards algebraic geometry},
% Grad. Texts in Math. {\bf 150}, 
% Springer-Verlag, New York, 1995, xvi+785 pp.\todo{verifier utilisation}

\bibitem[EtGi]{EG} {\sc P. Etingof \& V. Ginzburg}, 
{\it Symplectic reflection algebras, 
Calogero-Moser space, and deformed Harish-Chandra homomorphism}, 
Invent. Math. {\bf 147} (2002), 243-348.

% \bibitem[GAP]{gap}
% {\sc The GAP Group}, 
% GAP -- Groups, Algorithms, and Programming, Version 4.8.7; 2017, {\tt http://www.gap-system.org}.

% \bibitem[Gec]{geck f4} {\sc M.\ Geck},
% {\it Computing Kazhdan--Lusztig cells for unequal parameters},
% J. Algebra {\bf 281} (2004), 342-365.
% 
% \bibitem[GePf]{geck pfeiffer} {\sc M. Geck \& G. Pfeiffer}, 
% {\it Characters of finite Coxeter groups and Iwahori-Hecke algebras}, 
% London Mathematical Society Monographs, New Series {\bf 21}, 
% The Clarendon Press, 
% Oxford University Press, New York, 2000, xvi+446 pp.
% 

\bibitem[Gor]{gordon} {\sc I. Gordon}, 
{\it Baby Verma modules for rational Cherednik algebras}, 
Bull. London Math. Soc. {\bf 35} (2003), 321-336.

% \bibitem[Gor]{gordon icra} {\sc I. Gordon}, 
% {\it Symplectic reflection algebras}, 
% in {\it Trends in representation theory of algebras and related topics}, 285-347, 
% EMS Ser. Congr. Rep., Eur. Math. Soc., Z\"urich, 2008.

\bibitem[GiKa]{GK} {\sc V. Ginzburg \& D. Kaledin}, 
{\it Poisson deformations of symplectic quotient singularities}, 
Adv. Math. {\bf 186} (2004), 1-57.

\bibitem[GrSa]{greco} {\sc S. Greco \& P. Salmon}, 
{\it Topics in $\mG$-adic topology}, 
Ergeb. der Math. und ihrer Grenz. {\bf 58}, 
Springer-Verlag, New York-Berlin, 1971, vii+74 pp. 


% \bibitem[GoMa]{gordon martino} {\sc I. G. Gordon \& M. Martino}, 
% {\it Calogero-Moser space, restricted rational Cherednik algebras and two-sided cells},
% Math. Res. Lett. {\bf 16} (2009), 255-262.


\bibitem[Har]{hartshorne} {\sc R. Hartshorne}, 
{\it Algebraic geometry}, Graduate Texts in Mathematics {\bf 52}. 
Springer-Verlag, New York-Heidelberg, 1977. xvi+496 pp.

% 
% \bibitem[Lac]{lacabanne} {\sc A. Lacabanne}, 
% {\it On a conjecture about cellular characters for the 
% complex reflection group $G(d,1,n)$}, preprint (2019), {\tt arXiv:1912.06427}, 
% to appear in Ann. Math. Blaise Pascal.

\bibitem[Kal]{kaledin} {\sc D. Kaledin}, 
{\it Normalization of a Poisson algebra is Poisson}, 
Proc. Steklov Inst. Math. {\bf 264} (2009), 70-73. 

\bibitem[Lam]{lam} {\sc T.Y. Lam}, 
{\it A First Course in Noncommutative Rings}, 
Grad. Texts in Math. {\bf 131}, 1991, Springer-Verlag.

\bibitem[LeSp]{lehrer springer} {\sc G.I. Lehrer \& T.A. Springer}, 
{\it Intersection multiplicities and reflection subquotients of unitary reflection groups I}, 
Geometric group theory down under (Canberra, 1996), 181-193,
de Gruyter, Berlin, 1999.

\bibitem[Los]{losev} {\sc I. Losev}, 
{\it Completions of symplectic reflection algebras}, 
Selecta Math. {\bf 18} (2012), 179-251.

% \bibitem[LoS-A]{losev-shelley} {\sc I. Losev \& S. Shelley-Abrahamson}, 
% {\it On refined filtration by supports for rational Cherednik categories $\OC$}, 
% Selecta Math. {\bf 24} (2018), 1729-1804.

% \bibitem[Lus1]{lusztig orange} {\sc G. Lusztig}, 
% {\it Characters of reductive groups over a finite field}, 
% Annals of Mathematics Studies {\bf 107}. 
% Princeton University Press, Princeton, NJ, 1984. xxi+384 pp.
% 
% \bibitem[Lus2]{lusztig} 
% {\sc G.~Lusztig}, 
% {\it Hecke algebras with unequal parameters}, CRM Monograph Series 
% {\bf 18}, American Mathematical Society, Providence, RI (2003), 136 pp.
% 
% \bibitem[Lus3]{lusztig centre} {\sc G. Lusztig}, 
% {\it Unipotent representations as a categorical centre}, 
% Represent. Theory {\bf 19} (2015), 211-235.
% 
\bibitem[Mag]{magma}
{\sc W. Bosma, J. Cannon \& C. Playoust}, 
{\it The Magma algebra system. I. The user language}, 
J. Symbolic Comput. {\bf 24} (1997), 235-265.

% \bibitem[Mal]{malle} {\sc G. Malle}, 
% {\it On the rationality and fake degrees of characters of cyclotomic algebras}, 
% J. Math. Sci. Univ. Tokyo {\bf 6} (1999), 647-677.
% 
% \bibitem[Mar1]{marin1} {\sc I. Marin}, 
% {\it The cubic Hecke algebra on at most $5$ strands}, 
% J. Pure Appl. Algebra {\bf 216} (2012), 2754-2782.
% 
% \bibitem[Mar2]{marin2} {\sc I. Marin}, 
% {\it The freeness conjecture for Hecke algebras of complex reflection groups, 
% and the case of the Hessian group $G_{26}$}, 
% J. Pure Appl. Algebra {\bf 218} (2014), 704-720.
% 
% \bibitem[Mar3]{marin3} {\sc I. Marin}, 
% {\it Proof of the BMR conjecture for $G_{20}$ and $G_{21}$}, 
% J. Symbolic Comput. {\bf 92} (2019), 1-14. 
% 
% \bibitem[MaPf]{marin-pfeiffer} {\sc I. Marin \& G. Pfeiffer}, 
% {\it The BMR freeness conjecture for the $2$-reflection groups}, 
% Math. of Comput. {\bf 86} (2017), 2005-2023.


\bibitem[Mar1]{martino} {\sc M. Martino},
{\it The associated variety of a Poisson prime ideal}, 
J. London Math. Soc. {\bf 72} (2005), 110-120. 

\bibitem[Mar2]{martino these} {\sc M. Martino}, 
{\it Symplectic reﬂection algebras and Poisson geometry}, 
PhD thesis, University of Glasgow, 2006.

\bibitem[Nam1]{namikawa 1} {\sc Y. Namikawa}, 
{\it Poisson deformations of affine symplectic varieties}, 
Duke Math. J. {\bf 156} (2011), 51-85.

\bibitem[Nam2]{namikawa 2} {\sc Y. Namikawa}, 
{\it Poisson deformations of affine symplectic varieties, II}, 
Kyoto J. Math. {\bf 50} (2010), 727-752.

\bibitem[Spr]{springer} {\sc T.A. Springer}, 
{\it Regular elements of finite reflection groups}, 
Invent. Math. {\bf 25} (1974), 159-198. 

\bibitem[ShTo]{ST} {\sc G. C. Shephard \& J. A. Todd}, 
{\it Finite unitary reflection groups},
Canad. J. Math. {\bf 6} (1954), 274-304.

\bibitem[Thi]{thiel} {\sc U. Thiel}, 
{\it Champ: a Cherednik algebra Magma package}, 
LMS J. Comput. Math. {\bf 18} (2015), 266-307.
% 
% \bibitem[Tsu]{tsuchioka} {\sc S. Tsuchioka}, 
% {\it BMR freeness for icosahedral family}, 
% Exp. Math. {\bf 29} (2020), 234-245.
\end{thebibliography}
\end{document}